\theoremstyle{plain}
\newtheorem{theorem}{Theorem}[section]
\newtheorem{lemma}[theorem]{Lemma}
\newtheorem{proposition}[theorem]{Proposition}
\newtheorem{corollary}[theorem]{Corollary}
\theoremstyle{remark}
\newtheorem{definition}[theorem]{Definition}
\newtheorem*{example}{Example}
\renewcommand{\P}{\mathbb{P}}
\newcommand{\R}{\mathbb{R}}
\newcommand{\N}{\mathbb{N}}
\newcommand{\Z}{\mathbb{Z}}
\newcommand{\I}{\mathbb{I}}
\newcommand{\mF}{\mathcal{F}}
\newcommand{\mN}{\mathcal{N}}
\newcommand{\mS}{\mathcal{S}}
\newcommand{\Argmin}{\operatorname*{\text{\rm Arg}\min}}
\newcommand{\prox}{\text{\rm prox}}
\newcommand{\ind}{\mathds{1}}
\newcommand{\eps}{\varepsilon}
\newcommand{\row}{{\rm row}}
\newcommand{\col}{{\rm col}}
\newcommand{\rk}{{\rm rk}}
\newcommand{\rank}{{\rm rank}}
\newcommand{\codim}{{\rm codim}}
\newcommand{\defect}{{\rm def}}
\newcommand{\pen}{{\rm pen}}
\newcommand{\penequiv}{\overset{\pen}{\sim}}
\newcommand{\supequiv}{\overset{\|.\|_\infty}{\sim}}
\newcommand{\lin}{{\rm lin}}
\newcommand{\conv}{{\rm conv}}
\newcommand{\cone}{{\rm cone}}
\newcommand{\sign}{{\rm sign}}
\newcommand{\patt}{{\rm patt}}
\newcommand{\sure}{{\rm sure}}
\newcommand{\pattSLOPE}{{\rm patt}_{\text{\rm slope}}}
\newcommand{\pattTV}{{\rm patt}_{\text{\rm tv}}}
\newcommand{\pattTF}{{\rm patt}_{\text{\rm tf}}}
\newcommand{\relint}{{\rm ri}}
\newcommand{\aff}{{\rm aff}}
\newcommand{\betaLASSO}{\hat\beta^{\rm LASSO}}
\newcommand{\betaLASSOtau}{\hat\beta^{{\rm LASSO},\tau}}
\newcommand{\betaSLOPE}{\hat\beta^{\rm SLOPE}}
\newcommand{\betathres}{\hat\beta^{\rm thr}}
\newcommand{\Spen}{S_{X,{\rm pen}}}
\newcommand{\SpenL}{S_{X,\lambda {\rm pen}}}
\newcommand{\SpenLr}{S_{X,\lambda^{(r)} {\rm pen}}}
\newcommand{\SpenZL}{S_{Z,\lambda\pen}}
\newcommand{\SsupL}{S_{X,\lambda\|.\|_\infty}}
\newcommand{\SlassoL}{S_{X,\lambda\|.\|_1}}
\newcommand{\dlasso}{\partial_{\|.\|_1}}
\newcommand{\dslope}{\partial_{\|.\|_w}}
\newcommand{\dpen}{\partial_{{\rm pen}}}
\newcommand{\dgen}{\partial_{\|D.\|_1}}
\newcommand{\dsup}{\partial_{\|.\|_\infty}}
\newcommand{\dgtv}{\partial_{\|D^{\rm tv}.\|_1}}
\newcommand{\dgtf}{\partial_{\|D^{\rm tf}.\|_1}}
\renewcommand{\emptyset}{\varnothing}
\newcommand{\dphi}{\partial_\phi}
\newcommand{\Dtv}{D^{\rm tv}}
\newcommand{\Dtf}{D^{\rm tf}}
\newcommand{\lefttodo}[2][]{{%
 \let\marginpar\marginnote
 \reversemarginpar
 \renewcommand{\baselinestretch}{0.8}%
 \todo[#1]{#2}}}
\begin{document}

\onehalfspacing 

\author[1]{Piotr Graczyk}   \affil[1]{Université d'Angers, Angers, France}
\author[2]{Ulrike Schneider} 
 \affil[2]{TU Wien, Vienna, Austria}
\author[1,3]{Tomasz Skalski} \affil[3]{Politechnika Wroc\l{}awska, Wroc\l{}aw, Poland}
\author[4]{Patrick Tardivel} \affil[4]{Université Bourgogne Europe, CNRS, IMB UMR 5584, Dijon, France}

\title{A Unified Framework for Pattern Recovery in Penalized and
Thresholded Estimation and its Geometry}

\date{}

\maketitle

\begin{abstract}
We consider the framework of penalized estimation where the penalty
term is given by a real-valued polyhedral gauge, which encompasses
methods such as LASSO, generalized LASSO, SLOPE, OSCAR, PACS and
others. Each of these estimators is defined through an optimization
problem and can uncover a different structure or ``pattern'' of the
unknown parameter vector. We define a novel and general notion of
patterns based on subdifferentials and formalize an approach to
measure pattern complexity. For pattern recovery, we provide a minimal
condition for a particular pattern to be detected by the procedure
with positive probability, the so-called accessibility condition.
Using our approach, we also introduce the stronger noiseless recovery
condition. For the LASSO, it is well known that the irrepresentability
condition is necessary for pattern recovery with probability larger
than $1/2$ and we show that the noiseless recovery plays exactly the
same role in our general framework, thereby unifying and extending the
irrepresentability condition to a broad class of penalized estimators.
We also show that the noiseless recovery condition can be relaxed when
turning to so-called thresholded penalized estimators: we prove that
the necessary condition of accessibility is already sufficient for
sure pattern recovery by thresholded penalized estimation provided
that the noise is small enough. Throughout the article, we demonstrate
how our findings can be interpreted through a geometrical lens.
\end{abstract}

\smallskip

\medskip

\noindent {\bf Keywords:} penalized estimation, regularization, gauge, pattern recovery,
polytope, geometry, LASSO, generalized LASSO, SLOPE, irrepresentability condition, 
uniqueness.

\smallskip

\noindent {\bf Mathematics Subject Classification:} 62-08, 62J07; 49K10, 52B11

\newpage

\section{Introduction} \label{sec:intro}

Consider the linear regression model
$$ 
y = X\beta + \eps,
$$
where $X \in \R^{n \times p}$ is a design matrix, $\eps \in \R^n$
represents random noise and $\beta \in \R^p$ is the vector of unknown
regression coefficients. Penalized estimation of $\beta$ has been
studied extensively in the literature, of particular interest the case
where the penalization is polyhedral so that the estimator may detect
particular features of $\beta$. Depending on the penalty term,
different characteristics can be unveiled by the procedure. The most
prominent example is of course the LASSO \citep{Tibshirani96} with its
ability to perform model selection, i.e., potentially uncovering zero
components of $\beta$. In addition to this sparsity property, the
fused LASSO \citep{TibshiraniEtAl05} may set adjacent components to be
equal. Using the supremum norm promotes clustering of components that
are maximal in absolute value \citep{JegouEtAl20}. SLOPE
\citep{BogdanEtAl15} as well as OSCAR \citep{BondellReich08} display
further clustering phenomena where certain components may be equal in
absolute value -- to name just a few. When looking closer at these
phenomena under a geometric lens, \cite{SchneiderTardivel22} show that
for the LASSO, the naturally arising pattern structure not only
carries information about zero components, but also about the signs of
the non-zero components. This natural pattern structure of the LASSO
has appeared many times in the literature, such as in the conditioning
event in the selective inference approach of \cite{LeeEtAl16}, in the
so-called sign-consistency of the LASSO \citep[see
e.g.][]{ZhaoYu06,TardivelBogdan22}, sign accessibility of the LASSO
\citep{SepehriHarris17}, and in the solution path of the LASSO
\citep{MairalYu2012}. For SLOPE, the natural pattern structure
describes not only signs (zero components as well as the signs of
non-zero components) and clustering (components may be equal in
absolute value, a well-known phenomenon for this estimator), but in
addition conveys information about the ordering of the coefficients.
This pattern structure of SLOPE has also appeared in the literature,
such as in the so-called pattern-consistency of  SLOPE
\citep{BogdanEtAl25}, pattern accessibility of SLOPE
\citep{SchneiderTardivel22} and in the solution path of SLOPE
\citep{DupuisTardivel24}.

In this article, we provide a general approach that allows to
characterize the pattern structure naturally arising for a particular
penalized estimation method. We do so by introducing the notion of
patterns inherent to a method as equivalence classes of elements in
$\R^p$ exhibiting the same subdifferential with respect to the
penalizing term. We assume that the penalty term is given by a
polyhedral gauge, a concept slightly more general than a polyhedral
norm which allows to also treat methods such as the generalized LASSO
\citep{TibshiraniTaylor11}.

We show that the pattern equivalence classes coincide with the
relative interiors of the normal cones of the polytope $B^*$, where
$B^*$ is the subdifferential of the penalizing gauge at zero and that
the correspondence between equivalence classes and faces of $B^*$ is a
bijection. Moreover, the linear span of a pattern equivalence class is
a model subspace as defined in \cite{VaiterEtAl15,VaiterEtAl18}. The
partition into pattern classes and the partition into faces of $B^*$
can be viewed as a the natural stratifications of the so-called
mirror-stratifiable penalizing gauge, as treated in
\cite{FadiliEtAl18}. We also introduce the concept of complexity of a
pattern, defined to be the dimension of the linear span of the
corresponding equivalence class, and prove that this complexity
measure coincides with the codimension of the associated face of
$B^*$.

Given this general notion of patterns, we turn to the question of when
an estimation procedure may recover a specific pattern. A minimal
condition is the so-called accessibility condition of a pattern of
$\beta$ which gives equivalent criteria for the existence of point $y
\in \R^n$ such that the resulting estimator exhibits the pattern under
consideration. We express this criterion both in an analytic manner
and through a geometric criterion involving how the row span of $X$
intersects the polytope $B^*$. This extends the geometric condition
given for LASSO and SLOPE in \cite{SchneiderTardivel22} to the general
framework of gauge-penalized estimation. Note that a different
approach for an accessibility criterion for the LASSO under a
uniqueness assumption was also considered in \cite{SepehriHarris17}.
Under uniqueness, we prove that this minimal condition already ensures
pattern detection with positive probability, provided that the
response vector follows a continuous distribution on $\R^n$.

A stronger condition is given by the noiseless recovery condition,
where the estimator determined by the noiseless signal $y = X\beta$ is
required to possess the same pattern as $\beta$ for some value of the
tuning parameter. This condition can be proven to be equivalent to the
irrepresentability condition in case of the LASSO \citep[see
e.g.][]{BuehlmannVdGeer11} which is a necessary condition for pattern
recovery with probability of at least $1/2$ \citep{Wainwright09}. In
fact, the noiseless recovery condition is shown to play exactly the
same role as the irrepresentability condition in the general
gauge-penalized estimation framework: it is a necessary condition for
pattern recovery with probability of at least $1/2$ and allows to
unify and extend the concept of an irrepresentability condition to
entire class of gauge-penalized estimators. We also provide a
geometric criterion for this condition and discuss the ``gap'' between
accessibility and noiseless recovery. This geometric approach allows
to observe that accessibility is easier to fulfill for less complex
patterns whereas the gap between accessibility and noiseless recovery
becomes smaller when complexity increases.

It is known that the condition under which the support of the LASSO
contains the support of the regression parameter is weaker  than the
irrepresentability condition \citep{FadiliEtAl19}. More precisely,
sign recovery by thresholded LASSO -- where small non-zero components
may be set to zero additionally to existing zeros -- improves sign
recovery by LASSO \citep{TardivelBogdan22}. Inspired by these facts,
we define a general concept of thresholded estimators that alter the
penalized estimator by in some sense moving to a nested less complex
pattern. We show that for this thresholded penalized estimation, the
noiseless recovery condition which is necessary for pattern recovery
without thresholding, the much weaker accessibility condition is
already sufficient for sure pattern recovery under a uniqueness
assumption, provided that the noise is small enough.

For completeness, we also extend the necessary and sufficient
condition for uniform uniqueness from \cite{SchneiderTardivel22} to
gauge-penalized estimation which again relies on the connection
between patterns and the faces of $B^*$ and essentially shows that
uniqueness occurs if no pattern of complexity exceeding the rank of
$X$ is accessible. Numerous articles in the literature have  examined
the uniqueness of solutions in penalized estimation, particularly in
scenarios where $ X \in \R^{n \times p} $ and $ y \in \R^n $ are fixed
\citep{FadiliEtAl25,Gilbert17,MousaviShen19}. However, in statistics,
$y$ typically represents a random variable thus a stronger concept of
uniqueness, termed uniform uniqueness, which holds for all $ y \in
\R^n $, is pertinent
\citep{Tibshirani13,AliTibshirani19,EwaldSchneider20,SchneiderTardivel22}.

Finally, we illustrate some pattern recovery properties with numerical
experiments.

The paper is organized as follows. In Section~\ref{sec:setting}, we
introduce the given setting and notation. Section~\ref{sec:patterns}
treats defining and illustrating pattern structures. Pattern recovery
by penalized estimation is investigated in Section~\ref{sec:recovery},
whereas we turn to pattern recovery by thresholded penalized
estimation in Section~\ref{sec:recovery-thres}. Uniform uniqueness is
proven in Section~\ref{sec:unique} and Section~\ref{sec:num-exp} gives
some numerical illustrations. Section~\ref{sec:concl} concludes. All
proofs are relegated to Appendix~\ref{app:proofs}, before which
Appendix~\ref{app:polytopes} provides some definitions and results on
polytopes and gauges. Finally, Appendix~\ref{app:add-results} contains
additional results referred to throughout, including a result on
solution existence of the optimization problem treated in the article.

\section{Setting and Notation} \label{sec:setting}

The optimization problem we consider throughout the article is the
gauge-penalized least-squares problem described in the following. Let
$X \in \R^{n \times p}$ be completely arbitrary. Given $y \in \R^n$
and $\lambda > 0$, we define the set $\SpenL(y)$ of minimizers to be
given by
\begin{equation} \label{eq:pen_problem}
\SpenL(y) = \Argmin_{b \in \R^p} \frac{1}{2} \|y - Xb\|_2^2 + \lambda \pen(b),
\end{equation}
where ``$\pen$'' is a real-valued polyhedral gauge and $\|.\|_2$
denotes the Euclidean norm. A gauge is any non-negative and positively
homogeneous convex function that vanishes at $0$, and it is polyhedral
if its unit ball is given by a (possibly unbounded) polyhedron. A
polyhedral gauge $b \in \R^p \mapsto \pen(b) \in [0,\infty)$ can
always be written as the maximum of finitely many linear functions
\citep{Rockafellar97,MousaviShen19}, so that we can assume that
$$
\pen(b) = \max\{u_1'b,\dots,u_k'b\}, \text{ \rm for some }
u_1,\dots,u_k\in \R^p \text{ \rm with } u_1 = 0.
$$
Note that a polyhedral gauge  whose unit ball $B = \{b \in \R^p:
\pen(x) \leq 1\}$ is a bounded and symmetric polyhedron is in fact a
polyhedral norm. Examples of polyhedral norms and gauges are discussed
in more detail in Section~\ref{sec:patterns}. For our geometric
considerations, a central object of study will be the polytope $B^*$
defined as
$$
B^* = \conv(u_1,\dots,u_k),
$$
where $\conv(.)$ denotes the convex hull. In case $\pen$ is a norm,
$B^*$ coincides with the unit ball of the dual norm. 
The optimization problem in \eqref{eq:pen_problem} always possesses a
solution, as we show in Proposition~\ref{prop:existence} in
Appendix~\ref{app:add-results}\footnote{The existence of a minimizer is
clear when $\pen$ is a norm. For the special case of the generalized
LASSO (in which $\pen$ is not a norm), existence is shown in
\cite{AliTibshirani19} or \cite{DupuisVaiter23}. However, these
proofs cannot be generalized to arbitrary polyhedral gauges.}, but it
does not have to be unique. We treat uniqueness by giving a necessary
and sufficient condition in Section~\ref{sec:unique}.

\medskip

The following additional notation will be used throughout the article.
By $[p]$, we denote the set $\{1,\dots,p\}$. For a set $I \subseteq
[p]$, the symbol $I^c$ denotes its complement $I^c = [p]\setminus I$.
Given a matrix $X$ and an index set $I$, $X_I$ is the matrix with
columns corresponding to indices in $I$ only, with analogous notation
for a vector $b$, so that $b_I$ denotes the vector with components
with indices in $I$ only. The column of $X$ is $\col(X)$ and we define
the row space of $X$ to be $\row(X) = \col(X')$. We refer to the rank
of $X$ as $\rk(X)$. For a set $S \subseteq \R^p$, $\lin(S)$ is the
linear span of $S$, i.e., the smallest vector space containing $S$ and
$\aff(S)$ is the affine hull of $S$, i.e., the smallest affine space
containing $S$, whereas $\vec\aff(S)$ refers to the vector space
parallel to $\aff(S)$ given by $\{u - s: u \in \aff(S)\}$ for a fixed,
but arbitrary $s \in \aff(S)$. The relative interior of $S$ is denoted
by $\relint(S)$. The symbol $V^\perp$ is used for the orthogonal
complement of the vector space $V$ and $\ind(.)$ stands for the
indicator function. The sign of a number $a$ is $\sign(a) = \ind\{a
\geq 0\} - \ind\{a \leq 0\} \in \{-1,0,1\}$. For a vector $b$,
$\sign(b)$ is a vector of the same dimension with the $\sign$-function
applied componentwise. Moreover, for a convex function $\phi : \R^p
\to \R$, a vector $s \in \R^p$ is a \emph{subgradient of $\phi$ at
$\beta \in \R^p$} if
$$
\phi(b) \geq \phi(\beta) + s'(b-\beta) \;\; \forall b \in \R^p.
$$
The convex, non-empty set of all subgradients of $\phi$ at $\beta$ is
called the \emph{subdifferential of $\phi$ at $\beta$}, denoted by
$\partial_\phi(\beta)$. For a closed and convex set $K \subseteq \R^p$
and $\beta \in K$, the \emph{normal cone of $K$ at $\beta$} is given
by
$$
N_K(\beta) = \{s \in \R^p: s'(b-\beta) \leq 0 \; \forall b \in K\},
$$
see e.g.\ \citet[p.65]{HiriartLemarechal01}. Note that, by definition,
a normal cone contains $0$. Finally, $\mN(\mu,\sigma^2)$ denotes a
univariate normal distribution with mean $\mu$ and variance
$\sigma^2$.

\section{The Notion of Patterns, Pattern Complexity and Their
Connection to the Faces of $B^*$} \label{sec:patterns}
 
For a gauge-penalized estimation method, we determine its canonical
pattern structure and pattern complexity through the following
definition.

\begin{definition}[Pattern equivalence class] \label{def:equiv-rel}
Let $\pen$ be a real-valued polyhedral gauge on $\R^p$. We say that
$\beta$ and $\tilde\beta \in \R^p$ have the same \emph{pattern with
respect to $\pen$} if
$$
\dpen(\beta) = \dpen(\tilde\beta),
$$
i.e., if their subdifferentials of $\pen$ coincide. We then write
$\beta \penequiv \tilde\beta$. The set of all elements of $\R^p$
sharing the same pattern as $\beta$ is called the pattern equivalence
class $C_\beta$. Furthermore, we define the \emph{complexity} of the
pattern of $\beta$ to be the dimension of $\lin(C_\beta)$.
\end{definition}

There is an intrinsic connection between the patterns with respect to
$\pen$ and the faces of the polytope $B^*$ determined by $\pen$: it
can be shown that $B^* = \dpen(0)$ and that the other subdifferentials
$\dpen(\beta)$ with $\beta \neq 0$ make up the faces of $B^*$, see
Lemma~\ref{lem:subdiff-faces} in Appendix~\ref{app:polytopes} for
details. Therefore, there is a one-to-one relationship between the
pattern equivalence classes $C_\beta$ and the non-empty faces of $B^*$
(note that formally $B^*$ itself is also a face). In fact, this
relationship can be made fully concrete in the following theorem,
showing a pattern equivalence class $C_\beta$ in fact \emph{equals}
the (relative interior of the) normal cone of a face. To better
understand the corresponding statement in the theorem below, also note
that any point in the relative interior of a face of a polytope will
give rise to the same normal cone and that a normal cone always
``sits'' at the origin (in the sense that either 0 is the unique
extremal point of the cone or that the cone is ``centered'' there).
Figures~\ref{fig:pattern_LASSO}-\ref{fig:pattern_TV} are meant to
illustrate this further.

\begin{theorem} \label{thm:patt-cones}
Let $\pen$ be a real-valued polyhedral gauge on $\R^p$ and let $\beta
\in \R^p$. Then $C_\beta = \relint(N_{B^*}(s))$ where $s$ is an
arbitrary element of $\relint(\dpen(\beta))$ and $\lin(C_\beta) =
\vec\aff(\dpen(\beta))^\perp$.
\end{theorem}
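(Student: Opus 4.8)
The plan is to establish the two claimed identities separately, using the duality between normal cones and subdifferentials together with the polyhedral structure recorded in Lemma~\ref{lem:subdiff-faces}.

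First I would unpack the definitions. For a real-valued polyhedral gauge, $\pen(b) = \max_i u_i'b$ and $B^* = \conv(u_1,\dots,u_k)$, so that $\dpen(\beta) = \argmax_{s \in B^*} s'\beta = \{s \in B^*: s'\beta = \pen(\beta)\}$ is exactly the face of $B^*$ exposed by $\beta$ (this is the content of Lemma~\ref{lem:subdiff-faces}, which I may assume). The key observation is the biconjugate-type relation: for a nonzero $\beta$, $\beta$ exposes the face $F := \dpen(\beta)$ of $B^*$ iff $\beta \in N_{B^*}(b)$ for every $b \in \relint(F)$, and in fact $\beta \in \relint(N_{B^*}(b))$. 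I would prove the reverse inclusion too: if $b \in \relint(F)$ and $\gamma \in \relint(N_{B^*}(b))$, then the face of $B^*$ exposed by $\gamma$ is again $F$, hence $\dpen(\gamma) = F = \dpen(\beta)$, i.e. $\gamma \penequiv \beta$. Since $\dpen(\beta)$ depends only on the pattern class, the choice of $b \in \relint(\dpen(\beta))$ is immaterial, and the normal cone $N_{B^*}(b)$ is constant as $b$ ranges over $\relint(F)$ (a standard fact: the normal cone is constant on the relative interior of a face). This gives $C_\beta = \relint(N_{B^*}(b))$.

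For the span identity, I would use the general fact that for a polyhedron $B^*$ and a face $F$ with $b \in \relint(F)$, one has $\lin(N_{B^*}(b)) = (\vec\aff(F))^\perp$: the normal cone at $b$ lives in the orthogonal complement of the lineality directions of $F$ at $b$, and these are precisely $\vec\aff(F)$ since $F$ is a polyhedron and $b$ is relatively interior. Then $\lin(C_\beta) = \lin(\relint(N_{B^*}(b))) = \lin(N_{B^*}(b)) = (\vec\aff(F))^\perp = \vec\aff(\dpen(\beta))^\perp$, using that the linear span of the relative interior of a convex set equals the linear span of the set, and $F = \dpen(\beta)$. Care is needed with the degenerate case $\beta = 0$, where $\dpen(0) = B^*$, $\vec\aff(B^*)^\perp$ is the orthogonal complement of the affine hull of $B^*$, and $N_{B^*}(b)$ for $b \in \relint(B^*)$ is exactly that same orthogonal complement; one checks $C_0$ is this space as well, so both identities persist.

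The main obstacle I expect is the precise handling of the relative interiors and the unboundedness that $B^*$ may inherit when $\pen$ is a gauge but not a norm (e.g. the generalized LASSO case): one must be sure that ``face of $B^*$ exposed by $\beta$'' and ``normal cone of $B^*$ at a relative-interior point of that face'' are genuinely inverse operations on the face lattice, including for unbounded faces, and that the relative-interior bookkeeping does not collapse. This is where I would lean most heavily on Lemma~\ref{lem:subdiff-faces} and the polytope preliminaries in Appendix~\ref{app:polytopes}, checking that the normal-cone map $F \mapsto N_{B^*}(\relint F)$ and the exposed-face map are mutually inverse order-reversing bijections between the faces of $B^*$ and the faces of its normal fan; once that correspondence is in hand, both displayed equalities follow by chasing a single $b \in \relint(\dpen(\beta))$ through it.
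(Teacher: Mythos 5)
Your outline is correct and reaches both identities, but the architecture differs from the paper's. For the identity $C_\beta = \relint(N_{B^*}(b))$ the paper proves only the inclusion $C_\beta \subseteq \relint(N_{B^*}(\dpen(\beta)))$ explicitly --- containment in the closed normal cone by a direct subgradient computation, then the upgrade to the relative interior via a separate lemma (Lemma~\ref{lem:subdiff-incl}) showing that $\dpen(\cdot)$ can only shrink in a small neighbourhood --- and obtains the reverse inclusion for free from the fact that the sets $\relint(N_{B^*}(F))$ and the classes $C_\beta$ both partition $\R^p$. You instead propose to prove both inclusions directly by verifying that $F \mapsto \relint(N_{B^*}(F))$ and the exposed-face map $\gamma \mapsto \dpen(\gamma)$ are mutually inverse; that is a legitimate and more symmetric route, but it is not shorter: the direction ``$\dpen(\beta)=F$ implies $\beta$ lies in the \emph{relative interior} of the normal cone'' is exactly the paper's step~3 and still needs an argument (your ``and in fact $\beta \in \relint(N_{B^*}(b))$'' is the nontrivial part), while your reverse direction is precisely what the partition argument buys. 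For the span identity both you and the paper use the same fact, $\lin(N_P(\bar b)) = \vec\aff(F)^\perp$ for $F$ the minimal face containing $\bar b$ (Lemma~\ref{lem:normal_cone}). One correction: the ``main obstacle'' you anticipate --- unbounded faces of $B^*$ when $\pen$ is not a norm --- does not arise, since $B^* = \conv(u_1,\dots,u_k)$ is a polytope by construction; it is the unit ball $B$ of the gauge that may be unbounded. The genuine degeneracy in the gauge case is that $B^*$ may fail to be full-dimensional with $0$ on its relative boundary, which is harmless here because the normal fan of a lower-dimensional polytope still partitions $\R^p$ (every normal cone then contains $\vec\aff(B^*)^\perp$), exactly as your treatment of the case $\beta = 0$ suggests.
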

Note that the second part of the above theorem, which states that the
linear span of a pattern equivalence class equals the orthogonal
complement of the vectorized affine span of the corresponding
subdifferential or face, also demonstrates that the measure of
complexity of the pattern of $\beta$ introduced in
Definition~\ref{def:equiv-rel} coincides with the codimension of the
face $\dpen(\beta)$ of $B^*$ which is given by $p -
\dim(\dpen(\beta))$ as summarized in the corollary below.\footnote{The
dimension of a face is defined as the dimension of its affine hull,
see Appendix~\ref{app:polytopes} for details.} This quantity is also
relevant for uniform uniqueness characterized in
Theorem~\ref{thm:unique-gauge}. Additionally, the above statement
proves that $\lin(C_\beta)$ matches the notion of model subspace as
defined in \cite{VaiterEtAl15, VaiterEtAl18}.

As it is known that the relative interiors of the normal cones of a
polytope form a partition $\R^p$ \citep[see][p. 17, Theorem
4.13]{Ewald96}, the first part of Theorem~\ref{thm:patt-cones} shows
that this partition with respect to $B^*$ is the same as partitioning
the space by the pattern equivalence classes $C_\beta$. Moreover,
Theorem~\ref{thm:patt-cones} provides a geometrical construction --
via normal cones and faces -- of the partitions in the primal ($\R^p$)
and dual ($B^*$) spaces naturally induced by the mirror-stratifiable
polyhedral gauge \cite[Proposition 2]{FadiliEtAl18}. In this context,
the above theorem can also be viewed to provide the correspondence
operators of the stratifications.

\begin{corollary} \label{cor:compl-codim}
Let $\pen$ be a real-valued polyhedral gauge on $\R^p$ and let $\beta
\in \R^p$. Then the complexity of the pattern of $\beta$ with respect
to $\pen$ is given by the codimension of $\dpen(\beta)$.
\end{corollary}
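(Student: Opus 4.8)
The plan is to obtain the statement as an immediate consequence of the second assertion of Theorem~\ref{thm:patt-cones} combined with the elementary dimension formula for orthogonal complements. By Definition~\ref{def:equiv-rel} the complexity of the pattern of $\beta$ is $\dim(\lin(C_\beta))$, and Theorem~\ref{thm:patt-cones} identifies this linear span as $\vec\aff(\dpen(\beta))^\perp$. Since the orthogonal complement is taken inside the ambient space $\R^p$, we have $\dim\bigl(\vec\aff(\dpen(\beta))^\perp\bigr) = p - \dim\bigl(\vec\aff(\dpen(\beta))\bigr)$.

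It then remains to translate $\dim(\vec\aff(\dpen(\beta)))$ into the dimension of the face $\dpen(\beta)$ of $B^*$. Recall from Lemma~\ref{lem:subdiff-faces} that $\dpen(\beta)$ is indeed a face of $B^*$, and that the dimension of a face is by convention the dimension of its affine hull (see Appendix~\ref{app:polytopes}); moreover, for any affine space, the parallel vector space $\vec\aff$ has the same dimension as the affine hull itself. Hence $\dim(\vec\aff(\dpen(\beta))) = \dim(\aff(\dpen(\beta))) = \dim(\dpen(\beta))$. Chaining these equalities yields that the complexity of the pattern of $\beta$ equals $p - \dim(\dpen(\beta)) = \codim(\dpen(\beta))$, which is exactly the claim.

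There is no genuine obstacle here; the two points I would simply double-check are conventions rather than difficulties. First, one must confirm that ``codimension of a face of $B^*$'' is meant relative to the full space $\R^p$ and not relative to $\aff(B^*)$ (which would differ if $B^*$ is not full-dimensional), but the paper has already fixed this by defining it as $p - \dim(\dpen(\beta))$. Second, $\dpen(\beta)$ must be nonempty for its affine hull, and thus its dimension, to be well defined; this holds because the subdifferential of the finite convex function $\pen$ is nonempty everywhere, as noted in Section~\ref{sec:setting}. With these conventions in place, the corollary follows without any further computation.
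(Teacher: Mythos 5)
Your proof is correct and follows exactly the route the paper intends: the corollary is an immediate consequence of the identity $\lin(C_\beta) = \vec\aff(\dpen(\beta))^\perp$ from Theorem~\ref{thm:patt-cones}, the dimension formula for orthogonal complements in $\R^p$, and the convention $\dim(\dpen(\beta)) = \dim(\aff(\dpen(\beta)))$. Your two remarks on conventions (codimension taken in $\R^p$, nonemptiness of the subdifferential) are also consistent with how the paper sets things up.
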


The actual pattern structure for a given penalty term has to be
understood on a case-by-case basis. We illustrate the notion of
patterns and their complexity as well as the above theorem for several
examples of gauges in the following.

\begin{example}[Different penalizations and their patterns] \label{ex:patterns} \leavevmode

\begin{description}


\item[{\bf $\ell_1$-norm:}] The subdifferential of the $\ell_1$-norm at $0$ is given
by $B^* = \dlasso(0) = [-1,1]^p$. The pattern of $\beta \in \R^p$ can
be represented by its sign vector, $\sign(\beta) \in \{-1,0,1\}^p$ with
$$
\sign(\beta) = (\sign(\beta_1),\dots,\sign(\beta_p))'.
$$
Indeed, the subdifferentials $\dlasso(.)$ at two points in $\R^p$ will
be the same if and only if their sign vectors coincide so that
$C_\beta = \{b \in \R^p : \sign(b) = \sign(\beta)\}$ and the pattern
structure of the LASSO carries not only information about zero
components, but also the signs of the non-zero coefficients. Note that
the complexity of the LASSO pattern of $\beta$, which coincides with
the codimension of $\dlasso(\beta)$ by
Corollary~\ref{cor:compl-codim}, is given by $\|\sign(\beta)\|_1$, the
number of non-null components of $\beta$. See also
Figure~\ref{fig:pattern_LASSO}.

\begin{figure}[h!]

\centering

\begin{tikzpicture}[scale=0.7]

\fill[color=blue,fill opacity=0.7] (1,1) -- (1,-1) -- (-1,-1) -- (-1,1)-- (1,1);
\fill[color=red,fill opacity=0.1] (1,1) -- (1,4) -- (4,4) -- (4,1) -- (1,1);
\fill[color=red,fill opacity=0.1] (1,-1) -- (1,-4) -- (4,-4) -- (4,-1) -- (1,-1);
\fill[color=red,fill opacity=0.1] (-1,1) -- (-1,4) -- (-4,4) -- (-4,1) -- (-1,1);
\fill[color=red,fill opacity=0.1] (-1,-1) -- (-1,-4) -- (-4,-4) -- (-4,-1) -- (-1,-1);

\draw[color=green] (1,0)-- (4,0);
\draw[color=green] (-1,0)-- (-4,0);
\draw[color=green] (0,1)-- (0,4);
\draw[color=green] (0,-1)-- (0,-4);

\fill[color=red,fill opacity=0.1] (14,4) -- (14,-4) -- (6,-4) -- (6,4)-- (14,4);

\draw[color=green] (6,0)-- (14,0);
\draw[color=green] (10,-4)-- (10,4);

\draw (10,0) node[color=blue,fill opacity=0.7,circle,fill,inner sep=0pt,minimum size=5pt]{};

\draw (13.4,0.5) node[color=green]{$(1,0)'$};
\draw (9.3,3.5) node[color=green]{$(0,1)'$};
\draw (6.6,-0.5) node[color=green]{$(-1,0)'$};
\draw (10.9,-3.5) node[color=green]{$(0,-1)'$};

\draw (13.2,3.5) node[color=red]{$(1,1)'$};
\draw (7,-3.5) node[color=red]{$(-1,-1)'$};
\draw (6.9,3.5) node[color=red]{$(-1,1)'$};
\draw (13.2,-3.5) node[color=red]{$(1,-1)'$};

\draw (10.6,0.5) node[color=blue,fill opacity=0.7]{$(0,0)'$};

\end{tikzpicture}
 
\caption{\label{fig:pattern_LASSO} Pattern equivalence classes for the
LASSO in $p=2$ dimensions: On the left, the blue polytope is $B^* =
\dlasso(0) = \conv\{\pm(1,1)',\pm(1,-1)'\}$, together with the shifted
normal cones of the faces of $B^*$  in pink and green. To visualize
the correspondence between a face and a normal cone, the origin is
translated to the middle of the face. The picture on the right
provides the relative interior of the normal cones (containing the
origin on their boundary), coinciding with the pattern equivalence
classes $C_\beta = \{b \in \R^p: \sign(b) = \sign(\beta)\}$ for the
patterns $\sign(\beta) \in \{(0,0)',\pm(0,1)',\pm(1,
0)',\pm(1,1)',\pm(1,-1)'\}$.}
\end{figure}
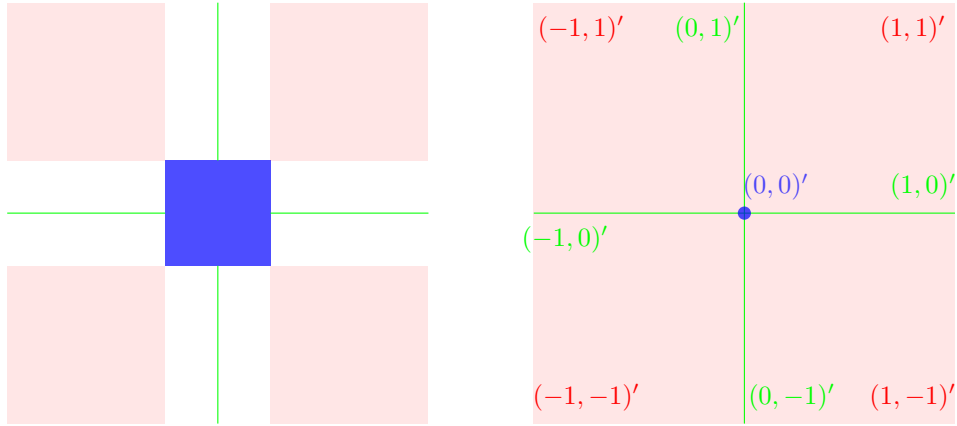


\item[{\bf Sorted-$\ell_1$-norm:}] For $b \in \R^p$, the
sorted-$\ell_1$-norm is defined as $\|b\|_w = \sum_{j=1}^p
w_j|b|_{(j)},$ where $|b|_{(1)} \geq \dots \geq |b|_{(p)}$ and $w_1
\geq \dots \geq w_p \geq 0$ with $w_1 > 0$ are pre-defined weights. It
can be shown that $B^*=\dslope(0) =
\conv\{(\pm w_{\pi(1)},\dots,\pm w_{\pi(p)})' : \pi \in \mS_p\}$ with $\mS_p$
denoting the set of all permutations on $[p]$. The polytope $B^*$ is
the so-called signed permutahedron, see \cite{NegrinhoMartins14} and
\cite{SchneiderTardivel22} for details. The SLOPE pattern of $\beta
\in \R^p$ is represented by $\pattSLOPE(\beta) \in \Z^p$ with each
component given by
$$
\pattSLOPE(\beta)_j = \sign(\beta_j)\,\mathrm{rank}(|\beta|)_j,
$$
where $\rank(|\beta|)_j \in \{0,1,\dots,m\}$ with $m$ the number of
(distinct) non-zero values in $\{|\beta_1|,\dots,|\beta_p|\}$ is
defined as follows: $\mathrm{rank}(|\beta|)_j = 0$ if $\beta_j = 0$,
$\rank(|\beta|)_j > 0$ if $|\beta_j| > 0$ and $\rank(|\beta|)_i <
\rank(|\beta|)_j$ if $|\beta_i|< |\beta_j|$, as can be learned in
\cite{SchneiderTardivel22}. For example, the SLOPE pattern of $\beta =
(3.1,-1.2,0.5,0,1.2,-3.1)'$ is given by $\pattSLOPE(\beta) =
(3,-2,1,0,2,-3)'$. Indeed, if $w \in \R^p$ satisfies $w_1 > \dots >
w_p > 0$, the subdifferentials $\dslope(.)$ at two points in $\R^p$
will be the same if and only if their SLOPE patterns coincide so that
$C_\beta = \{b \in \R^p : \pattSLOPE(b) = \pattSLOPE(\beta)\}$. This
shows that the SLOPE patterns do not only carry information about
zeros, signs and clustering, but also about the order of the clusters.
SLOPE patterns are also treated in \cite{HejnyEtAl23TR}. The
complexity of the SLOPE pattern of $\beta$ is given by
$\|\pattSLOPE(\beta)\|_\infty$, the number of non-zero clusters in
$\beta$, see \cite{SchneiderTardivel22}. See also
Figure~\ref{fig:pattern_SLOPE}.

\begin{figure}[h!]

\centering

\begin{tikzpicture}[scale=0.7]

\fill[color=blue,fill opacity=0.7] (2,1) -- (2,-1) -- (1,-2) -- (-1,-2)--(-2,-1) -- (-2,1) --
(-1,2)-- (1,2) -- (2,1);
\fill[color=red,fill opacity=0.1] (1,2) -- (1,4) -- (3,4) -- (1,2);
\fill[color=red,fill opacity=0.1] (-1,2) -- (-1,4) -- (-3,4) --(-1,2);
\fill[color=red,fill opacity=0.1] (1,-2) -- (1,-4) -- (3,-4) --(1,-2);
\fill[color=red,fill opacity=0.1] (-1,-2) -- (-1,-4) -- (-3,-4) --(-1,-2);
\fill[color=red,fill opacity=0.1] (2,1) -- (4,1) -- (4,3) -- (2,1);
\fill[color=red,fill opacity=0.1] (-2,1) -- (-4,1) -- (-4,3) -- (-2,1);
\fill[color=red,fill opacity=0.1] (2,-1) -- (4,-1) -- (4,-3) -- (2,-1);
\fill[color=red,fill opacity=0.1] (-2,-1) -- (-4,-1) -- (-4,-3) -- (-2,-1);

\draw[color=green] (1.5,1.5)-- (4,4);
\draw[color=green] (-1.5,-1.5)-- (-4,-4);
\draw[color=green] (1.5,-1.5)-- (4,-4);
\draw[color=green] (-1.5,1.5)-- (-4,4);
\draw[color=green] (2,0)-- (4,0);
\draw[color=green] (-2,0)-- (-4,0);
\draw[color=green] (0,-2)-- (0,-4);
\draw[color=green] (0,2)-- (0,4);

\fill[color=red,fill opacity=0.1] (6,-4) -- (14,-4) -- (14,4) -- (6,4) -- (6,-4);

\draw[color=green] (6,-4)-- (14,4);
\draw[color=green] (6,4)-- (14,-4);
\draw[color=green] (6,0)-- (14,0);
\draw[color=green] (10,-4)-- (10,4);
\draw (10,0) node[color=blue,fill opacity=0.7,circle,fill,inner sep=0pt,minimum size=5pt]{};
\draw (10.9,0.25) node[color=blue,fill opacity=0.7]{$(0,0)'$};
\draw (13.2,0.3) node[color=green]{$(1,0)'$};
\draw (6.8,-0.3) node[color=green]{$(-1,0)'$};
\draw (9.2,3.7) node[color=green]{$(0,1)'$};
\draw (10.8,-3.7) node[color=green]{$(0,-1)'$};
\draw (13,3.7) node[color=green]{$(1,1)'$};
\draw (7.2,-3.7) node[color=green]{$(-1,-1)'$};
\draw (7.2,3.7) node[color=green]{$(-1,1)'$};
\draw (12.8,-3.7) node[color=green]{$(1,-1)'$};
\draw (12.5,1) node[color=red]{$(2,1)'$};
\draw (11,2.5) node[color=red]{$(1,2)'$};
\draw (7.5,1) node[color=red]{$(-2,1)'$};
\draw (9,2.5) node[color=red]{$(-1,2)'$};
\draw (7.5,-1) node[color=red]{$(-2,-1)'$};
\draw (9,-2.5) node[color=red]{$(-1,-2)'$};
\draw (12.5,-1) node[color=red]{$(2,-1)'$};
\draw (11,-2.5) node[color=red]{$(1,-2)'$};

 \end{tikzpicture}
 
\caption{\label{fig:pattern_SLOPE} Pattern equivalence classes for
SLOPE in $p=2$ dimensions: On the left, the blue polytope is $B^* =
\dslope(0) = \conv\{\pm(w_1,w_2)',\pm(w_1,-w_2)',
\pm(w_2,w_1)',\pm(w_2,-w_1)'\}$, the signed permutahedron for the
SLOPE weights $w_1 > w_2 > 0$, together with the (shifted, uncentered)
normal cones of the faces of $B^*$ in pink and green. The picture on
the right provides the actual (relative interior of the) normal cones
which are always centered at the origin and which, by
Theorem~\ref{thm:patt-cones}, coincide with the pattern equivalence
classes $C_\beta = \{b \in \R^p: \pattSLOPE(b) = \pattSLOPE(\beta)\}$
for the patterns $\{(0,0)', \pm(1,0)', \pm(0,1)', \pm(1, 1)', \pm (1,
-1)', \pm(1,2)', \pm (1,-2)', \pm (2,1)', \pm (2,-1)'\}$.}
\end{figure}
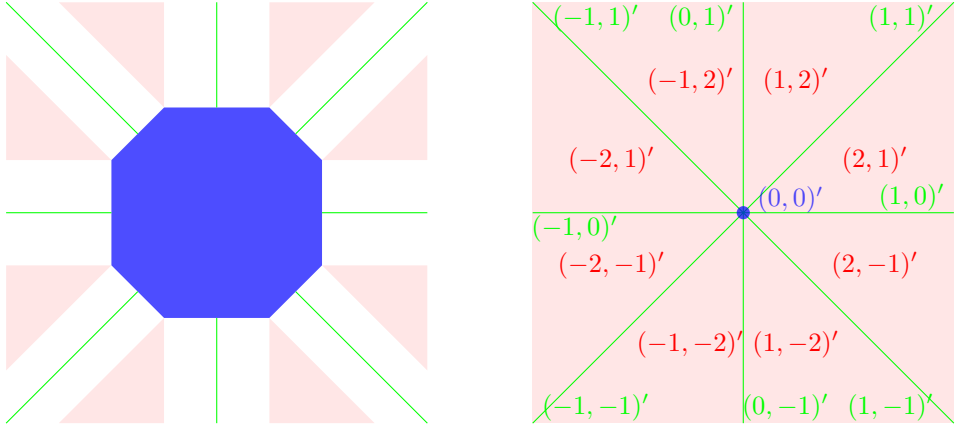


\item[{\bf $\ell_\infty$-norm:}] The subdifferential of the
$\ell_\infty$-norm at $0$ is the unit ball of the $\ell_1$-norm, $B^*
= \partial_{\|.\|_\infty}(0) = \{s : \|s\|_1 \leq 1\}$. The pattern of
$\beta \in \R^p$ can be represented by $\patt_\infty(\beta) \in
\{-1,0,1\}^p$ where each component is defined as
$$
\patt_\infty(\beta)_j = \ind\{\beta_j = \|\beta\|_\infty\} - \ind\{\beta_j = -\|\beta\|_\infty\}.
$$
Note that a zero component of $\patt_\infty(\beta)$ represents a
component of $\beta$ that is not maximal in absolute value or a
component of the zero vector. For instance, for $\beta =
(1.45,1.45,0.56,0,-1.45)'$, the pattern is given by
$\patt_\infty(\beta) = (1,1,0,0,-1)'$. Indeed, the subdifferentials at
two points in $\beta, \tilde\beta \in \R^p$ will be the same if and
only if $\patt_\infty(\beta) = \patt_\infty(\tilde\beta)$ so that
$C_\beta = \{b \in \R^p : \patt_\infty(b) = \patt_\infty(\beta)\}$.
This shows that the sup-norm patterns carry information about maximal
(in absolute value) and non-maximal components, as well as the sign
information of the maximal coefficients. The complexity of
$\patt_\infty(\beta)$ is given by $\ind\{\beta \neq 0\} (\sum_{j=1}^p
\ind\{|\beta_j| < \|\beta\|_\infty\} + 1)$, the number of non-maximal
components plus $1$ (accounting for the cluster of maximal components)
in case $\beta \neq 0$ and $0$ otherwise\footnote{An explicit
expression for $\dsup(\beta)$ can be found in
Appendix~\ref{subapp:nrc-sup}.}. See also Figure~\ref{fig:pattern_supnorm}.

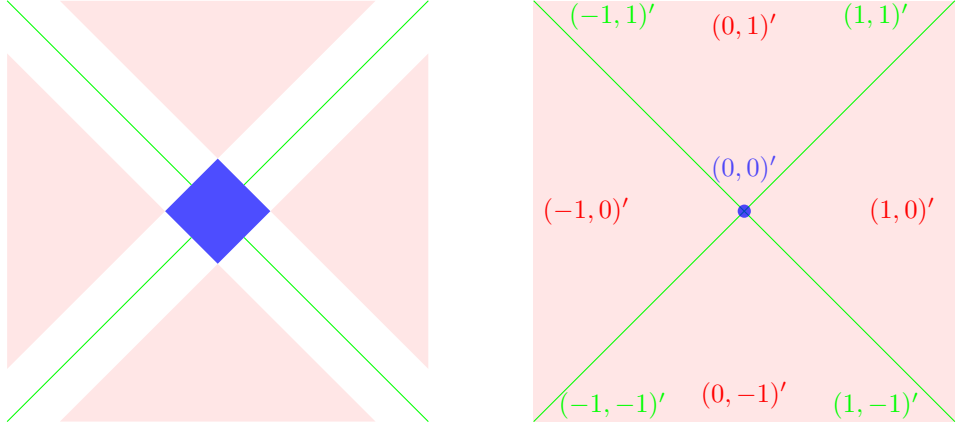
\begin{figure}[h!]
   
\centering

\begin{tikzpicture}[scale=0.7]

\fill[color=blue,fill opacity=0.7] (1,0) -- (0,1) -- (-1,0) -- (0,-1)--(1,0);
\fill[color=red,fill opacity=0.1] (0,1) -- (-3,4) -- (3,4) --(0,1);
\fill[color=red,fill opacity=0.1] (0,-1) -- (-3,-4) -- (3,-4) --(0,-1);
\fill[color=red,fill opacity=0.1] (1,0) -- (4,-3) -- (4,3) --(1,0);
\fill[color=red,fill opacity=0.1] (-1,0) -- (-4,-3) -- (-4,3) --(-1,0);

\draw[color=green] (0.5,0.5)-- (4,4);
\draw[color=green] (-0.5,-0.5)-- (-4,-4);
\draw[color=green] (0.5,-0.5)-- (4,-4);
\draw[color=green] (-0.5,0.5)-- (-4,4);

\fill[color=red,fill opacity=0.1] (6,-4) -- (6,4) -- (14,4) -- (14,-4)--(6,-4);

\draw[color=green] (6,-4)-- (14,4);
\draw[color=green] (6,4)-- (14,-4);
\draw (10,0) node[color=blue,fill opacity=0.7,circle,fill,inner sep=0pt,minimum size=5pt]{};
\draw (10,3.5) node[color=red]{$(0,1)'$};
\draw (10,-3.5) node[color=red]{$(0,-1)'$};
\draw (7,0) node[color=red]{$(-1,0)'$};
\draw (13,0) node[color=red]{$(1,0)'$};
\draw (12.5,3.7) node[color=green]{$(1,1)'$};
\draw (12.5,-3.7) node[color=green]{$(1,-1)'$};
\draw (7.5,3.7) node[color=green]{$(-1,1)'$};
\draw (7.5,-3.7) node[color=green]{$(-1,-1)'$};
\draw (10,0.8) node[color=blue,fill opacity=0.7]{$(0,0)'$};

\end{tikzpicture}
 
\caption{\label{fig:pattern_supnorm} Pattern equivalence classes for
the sup-norm in $p=2$ dimensions: On the left, the blue polytope is
$B^* = \dsup(0) = \conv\{\pm(1,0)',\pm(0,1)'\}$, together with the
(shifted, uncentered) normal cones of the faces of $B^*$ in pink and
green. The picture on the right provides the actual (relative interior
of the) normal cones which are always centered at the origin and
which, by Theorem~\ref{thm:patt-cones}, coincide with the pattern
equivalence classes $C_\beta = \{b \in \R^p: \patt_\infty(b) =
\patt_\infty(\beta)\}$ for the patterns $\patt_\infty(\beta) \in
\{(0,0)',\pm(0,1)',\pm(1, 0)',\pm(1,1)',\pm(1,-1)'\}$.}

\end{figure}


\item[{\bf Generalized LASSO:}] For the generalized Lasso, the penalty
term is given by $\pen(b) = \|Db\|_1$ where $D \in \R^{m \times p}$.
Note that, when $\ker(D) \neq \{0\}$, $\pen$ is only a semi-norm. We
list two common choices of $D$. For the subdifferential at $0$, we
have $\dgen(0)=D'[-1,1]^m$, see \citet[p.184]{HiriartLemarechal01}.

\begin{enumerate}

\item Let $p \geq 2$ and let $\Dtv \in \R^{(p-1) \times p}$ be the
first-order difference matrix defined as 
$$
\Dtv = \begin{pmatrix}
-1 & 1 & 0 & \dots & 0\\
0 & -1 & 1 & \dots & 0 \\
\vdots & \ddots &\ddots & \ddots & \vdots \\
0 & \dots & 0 & -1 & 1
\end{pmatrix}.
$$
The subdifferentials $\dgtv(\beta)$ and $\dgtv(\tilde\beta)$ are equal
if and only if $\sign(\Dtv\beta) = \sign(\Dtv\tilde\beta)$, so that we
can represent the pattern by this expression. Note that
$\sign(\Dtv\beta)_j = 0$ if $\beta_{j+1} = \beta_j$. Moreover,
$\sign(\Dtv\beta)_j = 1$ or $\sign(\Dtv\beta)_j = -1$ if $\beta_{j+1}
> \beta_j$ or $\beta_{j+1} < \beta_j$, respectively. For example, the
pattern of $\beta = (1.45,1.45,0.56,0.56,-0.45,0.35)'$ is given by
$\pattTV(\beta) = \sign(\Dtv\beta) = (0,-1,0,-1,1)'$. Clearly,
$C_\beta = \{b \in \R^p : \pattTV(b) = \pattTV(\beta)\}$. The
complexity of $\pattTV(\beta)$ is given by $1 +
\|\sign(\Dtv\beta)\|_1$, the number of jumps plus 1 (accounting for
the last component). See also Figure~\ref{fig:pattern_TV}.

\begin{figure}[h!]

\centering

\begin{tikzpicture}[scale=0.7]

\draw[color=blue,fill opacity=0.7] (1,-1)-- (-1,1);

\fill[color=red,fill opacity=0.1] (2,4) -- (-4,-2) -- (-4,4) -- (2,4);
\fill[color=red,fill opacity=0.1] (-2,-4) -- (4,2) -- (4,-4) -- (-2,-4);

\draw[color=green] (-4,-4)-- (4,4);

\fill[color=red,fill opacity=0.1] (6,-4) -- (6,4) -- (14,4) -- (14,-4) -- (6,-4);

\draw[color=green] (6,-4)-- (14,4);
\draw (13.5,3) node[color=green]{$0$};
\draw (6.5,3.5) node[color=red]{$1$};
\draw (13.5,-3.5) node[color=red]{$-1$};

\end{tikzpicture}
 
\caption{\label{fig:pattern_TV} Pattern equivalence classes for the
generalized LASSO with penalizing first-order differences ($D = \Dtv$)
in $p=2$ dimensions: On the left, the blue polytope is
$B^*=\dpen(0)=\conv\{\pm(1,-1)'\}$ together with the (shifted,
uncentered) normal cones of the faces of $B^*$ in pink and green. The
picture on the right provides the actual (relative interior of the)
normal cones which are always centered at the origin and which, by
Theorem~\ref{thm:patt-cones}, coincide with the pattern equivalence
classes $C_\beta = \{b \in \R^p: \pattTV(b) = \pattTV(\beta)\}$ for
the patterns $\pattTV(\beta) \in \{-1,0,1\}$.}

\end{figure}
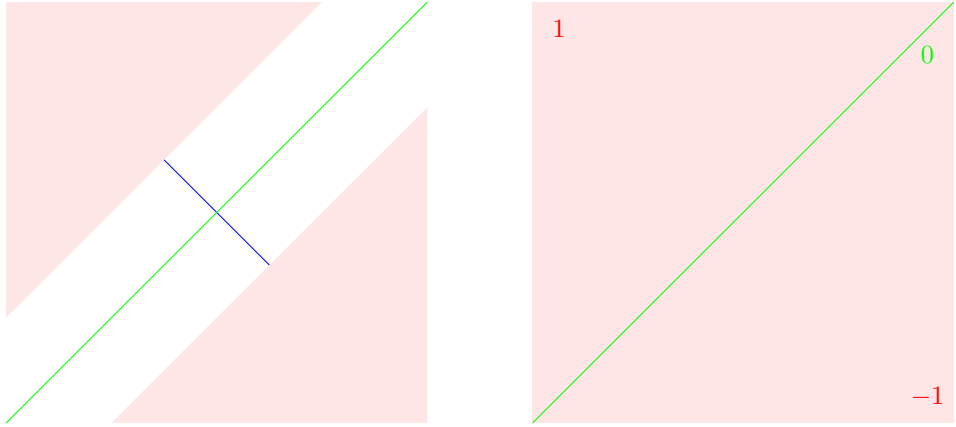

\item Let $p \geq 3$ and let $\Dtf \in \R^{(p-2) \times p}$
be the second-order difference matrix defined as
$$
\Dtf = \begin{pmatrix}
1 & -2 & 1 & 0 & \dots & 0\\
0 & 1 & -2 & 1 & \dots & 0 \\
\vdots & \ddots &\ddots & \ddots & \ddots & \vdots \\
0 & \dots & 0 & 1 & -2 & 1
\end{pmatrix}.
$$
The resulting method is called $\ell_1$-trend filtering
\citep{KimEtAl09} which in this context can be viewed as a special
case of the generalized LASSO. The subdifferentials $\dgtf(\beta)$ and
$\dgtf(\tilde\beta)$ are equal if and only if $\sign(\Dtf\beta) =
\sign(\Dtf\tilde\beta)$, so that we can represent the pattern by this
expression. To illustrate this pattern structure, consider the
piecewise linear curve $G_\beta =
\cup_{j=1}^{p-1}[(j,\beta_j),(j+1,\beta_{j+1})]$. Note that
$\sign(\Dtf\beta)_j = 0$ if, in a neighborhood of the point
$(j,\beta_j)$, the curve $G_\beta$ is linear. Moreover,
$\sign(\Dtf\beta)_j = 1$ or $\sign(\Dtf\beta)_j = -1$ if, in a
neighborhood of the point $(j,\beta_j)$, the curve $G_\beta$ convex or
concave, respectively. For instance, Figure~\ref{fig:pattern_TF}
provides an illustration of $\sign(D^{\rm tf}(x))$ for a particular
$x\in \R^9$. Finally, the complexity of $\pattTF(\beta)$ is given by
is given by $2+\|\sign(\Dtf\beta)\|_1$, the number ``non-linear
points'' plus $2$ (accounting for the first and last point). See also
Figure~\ref{fig:pattern_TF}.

\end{enumerate}

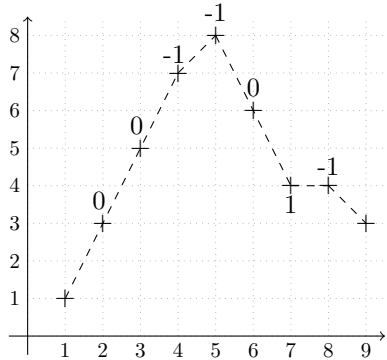
\begin{figure}[h!]

\centering

\begin{tikzpicture}[scale=0.5]

\draw[very thin,color=gray,dotted] (0,0) grid (9.5,8.5);
\draw[->] (-0.5,0) -- (9.5,0);
\draw[->] (0,-0.5) -- (0,8.5);
\foreach \y in {1,2,...,8}
\draw (0,\y) node[left,scale=0.8]{\y};
\foreach \x in {1,2,...,9}
\draw (\x,0) node [below,scale=0.8] {\x};

\draw (1,1) node {$+$};
\draw (2,3) node {$+$};
\draw (1.9,3.6) node {0};
\draw (3,5) node {$+$};
\draw (2.9,5.6) node {0};
\draw (4,7) node {$+$};
\draw (3.9,7.5) node {-1};
\draw (5,8) node {$+$};
\draw (5,8.6) node {-1};
\draw (6,6) node {$+$};
\draw (6,6.6) node {0}; 
\draw (7,4) node {$+$};
\draw (7,3.5) node {1};
\draw (8,4) node {$+$};
\draw (8,4.5) node {-1};
\draw (9,3) node {$+$};

\draw[dashed] (1,1) -- (4,7);
\draw[dashed] (4,7) -- (5,8);
\draw[dashed] (5,8) -- (7,4);
\draw[dashed] (7,4) -- (8,4);
\draw[dashed] (8,4) -- (9,3);

\end{tikzpicture}  

\caption{\label{fig:pattern_TF} In this figure, the dotted curve
represents $G_\beta$ described above for $\beta = (1,3,5,7,8,6,4,4,3)'$. Here,
$\sign(\Dtf\beta) = (0,0,-1,-1,0,1,-1)'$.}

\end{figure} 

\end{description}

Note that for the $\ell_1$-norm, the sorted-$\ell_1$-norm and the
$\ell_\infty$-norm, the pattern of $\beta$ itself is a canonical
representative of the equivalence class $C_\beta$. On the other hand,
for generalized LASSO, $\sign(\Dtv\beta)$ and $\sign(\Dtf\beta)$
characterize the pattern but are not an element of $C_\beta$ as there
appears to be no natural way to represent the pattern as such.

\end{example}

\section{Pattern Recovery in Penalized Estimation: Accessibility,
Noiseless Recovery and the Irrepresentability Condition}
\label{sec:recovery}

We now turn to the question of under which conditions a pattern can be
recovered by a penalized estimation procedure. For this, we first
introduce the notion of accessible patterns. Accessibility requires
the existence of \emph{a} response vector such that the resulting
estimator exhibits the required pattern, which is clearly a minimal
condition for possible pattern recovery. The definition generalizes
the notion of accessible sign vectors for LASSO
\citep{SepehriHarris17,SchneiderTardivel22} and accessible patterns
for SLOPE \citep{SchneiderTardivel22} to the general class of
estimators penalized by a polyhedral gauge.

\begin{definition}[Accessible pattern] 
Let $X \in \R^{n \times p}$, $\lambda > 0$ and $\pen$ be a real-valued
polyhedral gauge. We say that $\beta \in \R^p$ has an accessible
pattern with respect to $X$ and $\lambda\pen$, if there exists $y \in
\R^n$ and $\hat\beta \in \SpenL(y)$ such that $\hat \beta
\overset{\pen}{\sim}\beta$. 
\end{definition}
Proposition~\ref{prop:acc} provides both a geometric and an analytic
characterization for the notion of accessible patterns.

\begin{proposition}[Characterization of accessible patterns] 
\label{prop:acc} Let $X \in \R^{n \times p}$, $\lambda>0$ and
$\pen:\R^p \to \R$ be a polyhedral gauge.

\begin{enumerate}

\item Geometric characterization: The pattern of  $\beta \in
\R^p$ is accessible with respect to $X$ and $\lambda\pen$ if and only
if
$$
\row(X) \cap \dpen(\beta) \neq \emptyset.
$$

\item Analytic characterization: The pattern of $\beta \in \R^p$
is accessible with respect to $X$ and $\lambda\pen$ if and only if for
any $b \in \R^p$ the implication
$$
X\beta = Xb \implies \pen(\beta) \leq \pen(b)
$$
holds.

\end{enumerate}

\end{proposition}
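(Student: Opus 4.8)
The plan is to route both characterizations through the first-order optimality condition for the convex problem \eqref{eq:pen_problem}. Since $\pen$ is a real-valued convex function, the objective $b \mapsto \tfrac12\|y-Xb\|_2^2 + \lambda\,\pen(b)$ is finite and convex on all of $\R^p$, so the subdifferential sum rule applies with no constraint qualification and $\hat\beta \in \SpenL(y)$ if and only if
$$
X'(y - X\hat\beta) \in \lambda\,\dpen(\hat\beta).
$$
I will also freely use the elementary identity $\row(X) = \col(X') = \ker(X)^\perp$.

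For the geometric characterization I argue both implications directly from this optimality condition. If the pattern of $\beta$ is accessible, choose $y$ and $\hat\beta \in \SpenL(y)$ with $\dpen(\hat\beta) = \dpen(\beta)$; then $\tfrac1\lambda X'(y - X\hat\beta)$ lies in $\dpen(\hat\beta) = \dpen(\beta)$ and, being of the form $X'v$, also in $\row(X)$, so $\row(X)\cap\dpen(\beta)\neq\emptyset$. Conversely, if $s = X'v \in \row(X)\cap\dpen(\beta)$, set $y := X\beta + \lambda v$; then $X'(y - X\beta) = \lambda X'v \in \lambda\,\dpen(\beta)$, so $\beta$ itself belongs to $\SpenL(y)$ and its pattern is trivially accessible. (As a by-product this shows that a pattern is accessible precisely when some $\beta$ realizing it is itself a minimizer for a suitable response, which is perhaps worth recording.)

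For the analytic characterization I show it is equivalent to the geometric one. First note that the implication ``$X\beta = Xb \Rightarrow \pen(\beta)\le\pen(b)$ for all $b$'' says exactly that $\beta$ minimizes $\pen$ over the affine subspace $A := \{b : Xb = X\beta\} = \beta + \ker(X)$. If $\row(X)\cap\dpen(\beta)\neq\emptyset$, say $s = X'v\in\dpen(\beta)$, then for every $b$ with $Xb = X\beta$ the subgradient inequality gives $\pen(b)\ge\pen(\beta) + s'(b-\beta) = \pen(\beta) + v'(Xb-X\beta) = \pen(\beta)$, i.e.\ the analytic condition holds. Conversely, if $\beta$ minimizes $\pen$ over $A$, the first-order condition for convex minimization over a convex set (again with no constraint qualification, since $\pen$ is finite on $\R^p$) gives $0 \in \dpen(\beta) + N_A(\beta)$; as $N_A(\beta) = \ker(X)^\perp = \row(X)$ is a linear subspace, this is equivalent to $\dpen(\beta)\cap\row(X)\neq\emptyset$.

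The individual computations are short, so there is no genuinely hard step; the only point demanding care is the invocation of the subdifferential sum rule, equivalently the first-order condition over an affine set, in the last implication — which one could instead replace by a separating-hyperplane or LP-duality argument — together with the observation that the requisite constraint qualification is automatic because $\pen$ is finite everywhere. Everything else follows by direct manipulation of the optimality condition $X'(y-X\hat\beta)\in\lambda\,\dpen(\hat\beta)$ and the identity $\row(X) = \ker(X)^\perp$.
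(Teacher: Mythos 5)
Your proof is correct and follows essentially the same route as the paper: the geometric characterization via the optimality condition $X'(y-X\hat\beta)\in\lambda\,\dpen(\hat\beta)$ with the choice $y=X\beta+\lambda v$ for the converse, and the analytic characterization by recognizing it as minimization of $\pen$ over $\beta+\ker(X)$ and applying Fermat's rule with $N_A(\beta)=\row(X)$ (the paper phrases this via the indicator function $\iota_\beta$ with $\partial_{\iota_\beta}(\beta)=\row(X)$, which is the same argument). The only cosmetic difference is that you prove one direction of the analytic equivalence by a direct subgradient inequality rather than invoking the sum rule in both directions.
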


Based on Proposition~\ref{prop:acc}, it is clear that accessibility
does not depend on the particular value of the tuning parameter
$\lambda$. We therefore also say that the pattern of $\beta$ is
accessible with respect to $X$ and $\pen$. The geometric
characterization shows that we have accessibility for the pattern of
$\beta$ if and only if $\row(X)$ intersects the face of $B^*$ that
corresponds to the pattern of $\beta$. Since the smaller the
complexity of the pattern is, the larger the dimension of the
corresponding face becomes, the geometric criterion also demonstrates
that more complex patterns are harder to access. 

The following proposition and corollary greatly strengthen the notion
of accessibility, showing that under uniform uniqueness, accessibility
already implies the existence an entire set of $y's$ in $\R^n$ with
non-empty interior that lead to the pattern of interest:

\begin{proposition} \label{prop:attainability}
Let $X \in \R^{n \times p}$, $\lambda > 0$ and $\pen:\R^p \to \R$ be a
polyhedral gauge. Assume that uniform uniqueness holds, i.e. for any
$y \in \R^n$, the set $\SpenL(y)$ contains the unique minimizer
$\hat\beta(y)$. Let $\beta \in \R^p$. If the pattern of $\beta$ is
accessible with respect to $X$ and $\pen$, the set
$$
A_\beta = \{y : \hat\beta(y) \penequiv \beta\} 
$$
has non-empty interior.
\end{proposition}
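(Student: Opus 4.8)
Throughout write $F:=\dpen(\beta)$; recall from Theorem~\ref{thm:patt-cones} that $V:=\lin(C_\beta)=\vec\aff(F)^\perp$, that $\beta\in C_\beta\subseteq V$, and that $\dpen(b)=F$ for every $b\in C_\beta$, and recall the subgradient optimality condition $\hat\beta\in\SpenL(y)\iff\frac1\lambda X'(y-X\hat\beta)\in\dpen(\hat\beta)$, which under uniform uniqueness determines $\hat\beta(y)$. The plan is to produce an explicit $y_0$ with $\hat\beta(y_0)=\beta$ and then show, via an implicit--function argument, that the pattern $\penequiv\beta$ persists on a whole neighbourhood of $y_0$.

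First I record two consequences of uniform uniqueness. \emph{(i) $X$ restricted to $V$ is injective.} If $0\neq v\in\ker X\cap V$, then $\beta+tv\in C_\beta$ for $|t|$ small (as $C_\beta$ is relatively open in $V$), so for any $r\in F$ one has $\pen(\beta+tv)=r'(\beta+tv)=\pen(\beta)+t\,r'v$; hence at any $\tilde y$ with $\hat\beta(\tilde y)=\beta$ (which exists by accessibility, taking $\tilde y=X\beta+\lambda w$ for $X'w\in F$) the objective obeys $f_{\tilde y}(\beta+tv)=f_{\tilde y}(\beta)+\lambda t\,r'v$, contradicting uniqueness of the minimizer (take $t$ of the appropriate sign, or both signs if $r'v=0$). \emph{(ii) $\row(X)\cap\relint(F)\neq\emptyset$.} Accessibility only gives $\row(X)\cap F\neq\emptyset$; but if $\row(X)$ met $F$ only along its relative boundary, picking a point in the relative interior of a proper face $G\subsetneq F$ that $\row(X)$ meets would exhibit an accessible pattern of strictly larger complexity, which together with (i) and the characterization of uniform uniqueness in terms of the complexity of accessible patterns (Theorem~\ref{thm:unique-gauge}) is impossible. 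I expect step (ii) to be the main obstacle.

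Granting this, fix $r\in\row(X)\cap\relint(F)$, write $r=X'v$, and put $y_0:=X\beta+\lambda v$; then $\frac1\lambda X'(y_0-X\beta)=r\in F=\dpen(\beta)$, so $\hat\beta(y_0)=\beta$. For $y$ near $y_0$ I seek $b\in V$ with (a) $\frac1\lambda X'(y-Xb)\in\aff(F)$, (b) $\frac1\lambda X'(y-Xb)\in F$, and (c) $b\in C_\beta$. Condition (a), read modulo $\vec\aff(F)$, is a system of $\dim(\R^p/\vec\aff(F))=\codim(F)=\dim V$ linear equations in $b\in V$, solved at $(y,b)=(y_0,\beta)$, whose homogeneous part $\delta b\mapsto X'X\delta b\bmod\vec\aff(F)$ is injective on $V$: indeed $X'X\delta b\in\vec\aff(F)=V^\perp$ forces $X\delta b\perp XV\ni X\delta b$, i.e.\ $\delta b\in\ker X\cap V=\{0\}$ by (i). Hence (a) admits a unique solution $b(y)$, affine (in particular continuous) in $y$, with $b(y_0)=\beta$. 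Then (c) holds for $y$ near $y_0$ because $C_\beta$ is relatively open in $V$ and $b(y)\to\beta$; and (b) holds for $y$ near $y_0$ because $\frac1\lambda X'(y-Xb(y))$ lies in $\aff(F)$ by (a) and converges to $r\in\relint(F)$, hence stays in $F$. When (a)--(c) hold, $b(y)$ satisfies the optimality condition with $\dpen(b(y))=F$, so $b(y)\in\SpenL(y)$ and $\hat\beta(y)=b(y)\in C_\beta$, i.e.\ $\hat\beta(y)\penequiv\beta$. Thus a neighbourhood of $y_0$ is contained in $A_\beta$, which proves the claim.
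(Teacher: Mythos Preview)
Your overall strategy coincides with the paper's: obtain a point $r=X'v\in\row(X)\cap\relint(F)$, set $y_0=X\beta+\lambda v$, and then show the pattern persists on a neighbourhood of $y_0$ by producing, for each nearby $y$, a point $b(y)\in C_\beta$ satisfying the first-order condition. The paper carries out this last step via the explicit decomposition $\R^n=\col(XU_\beta)\oplus\ker(U_\beta'X')$ (with the columns of $U_\beta$ a basis of $V$) and the formula $\tilde\beta=\beta+U_\beta(XU_\beta)^+\tilde\eps$; your linear-system phrasing is an equivalent repackaging, and your injectivity argument via (i) is correct.

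The genuine gap is exactly where you flagged it. Your justification of (ii) does not go through: exhibiting an accessible proper face $G\subsetneq F$ only produces a pattern of complexity $\codim(G)>\codim(F)$, and this does \emph{not} contradict Theorem~\ref{thm:unique-gauge} (equivalently Corollary~\ref{cor:unique-compl}), which merely bounds all accessible complexities by $\rk(X)$ --- both $\codim(F)$ and $\codim(G)$ may simultaneously lie below $\rk(X)$. Applying (i) to the pattern at $G$ gives $\ker X\cap\vec\aff(G)^\perp=\{0\}$, but this is perfectly compatible with $\ker X\cap V=\{0\}$ and yields no contradiction either.

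The paper sidesteps this by citing \citet[Proposition~5.2]{Gilbert17} for the relative-interior statement. You can also prove (ii) directly, reusing the uniqueness trick from your step (i): if $\row(X)\cap\relint(F)=\emptyset$ but $\row(X)\cap F\neq\emptyset$, properly separate the subspace $\row(X)$ from $F$ to obtain $0\neq h$ with $h\perp\row(X)$ (so $h\in\ker X$), $h's\ge 0$ for all $s\in F$, and $h's>0$ for some $s\in F$. Any $s_0\in\row(X)\cap F$ then has $h's_0=0$, so some vertex $u_l$ of $F$ satisfies $h'u_l=0$; hence for small $t>0$ one checks $\pen(\beta-th)=\pen(\beta)$ (the indices $l\in I$ with $h'u_l=0$ remain active) while $X(\beta-th)=X\beta$, making $\beta-th$ a second minimizer at your $\tilde y$ and contradicting uniform uniqueness. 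With (ii) secured in this way, the remainder of your argument is correct.
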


Clearly, Proposition~\ref{prop:attainability} demonstrates that under
a uniqueness assumption, accessibility of a pattern already implies
that the pattern can be detected by the penalized procedure with
positive probability, provided that $y$ is generated by a continuous
distribution taking on all values in $\R^n$. This is summarized in the
corollary below. It is related to the concept of attainability in
\cite{HejnyEtAl23TR} which they view for SLOPE in an asymptotic
setting.

\begin{corollary} \label{cor:attainability}
Let $X \in \R^{n \times p}$, $\lambda > 0$ and $\pen:\R^p \to \R$ be a
polyhedral gauge. Let $\beta \in \R^p$ have an accessible pattern and
assume that uniform uniqueness holds. If $y$ follows a distribution
with positive Lebesgue-density on $\R^n$, then
$$
\mathbb{P}(\hat\beta(y) \penequiv \beta) > 0. 
$$
\end{corollary}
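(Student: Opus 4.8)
The plan is to deduce the corollary directly from Proposition~\ref{prop:attainability}, which already contains all the substantive content. By that proposition, the two standing hypotheses — accessibility of the pattern of $\beta$ with respect to $X$ and $\pen$, together with uniform uniqueness — guarantee that the set
$$
A_\beta = \{y \in \R^n : \hat\beta(y) \penequiv \beta\}
$$
has non-empty interior. Hence I would fix an open Euclidean ball $B \subseteq A_\beta$; such a ball has strictly positive Lebesgue measure.

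The remaining step is to translate this into a probabilistic statement. Since $B \subseteq A_\beta$, the event inclusion $\{Y \in B\} \subseteq \{\hat\beta(Y) \penequiv \beta\}$ holds, and $\{Y \in B\}$ is measurable because $B$ is open, hence Borel; in particular, there is no need to establish measurability of the full map $y \mapsto \hat\beta(y)$ or of $A_\beta$ itself. Writing $f$ for the Lebesgue density of $Y$, this gives
$$
\P\big(\hat\beta(Y) \penequiv \beta\big) \;\geq\; \P(Y \in B) \;=\; \int_B f(y)\,dy \;>\; 0,
$$
where the final strict inequality holds because $f > 0$ everywhere on $\R^n$ and $B$ has positive Lebesgue measure.

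There is essentially no obstacle here: the only point worth handling carefully is to route the conclusion through the open ball $B$ rather than through $A_\beta$ directly, so as to avoid having to argue that $A_\beta$ is Borel measurable. Positivity of the density on a set of positive Lebesgue measure then closes the argument, and all the geometric work — exhibiting a full-dimensional region of response vectors producing the prescribed pattern — has already been carried out in Proposition~\ref{prop:attainability}.
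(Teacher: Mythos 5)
Your proposal is correct and matches the paper's treatment: the corollary is stated there as an immediate consequence of Proposition~\ref{prop:attainability}, and your argument (take an open ball inside the non-empty interior of $A_\beta$ and integrate the positive density over it) is exactly the intended reasoning, with the added care about measurability being a harmless refinement the paper does not bother with.
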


We now turn to a stronger requirement for pattern recovery. For this,
we consider the solution path of a penalized estimator, given by the
curve $0 < \lambda \mapsto \hat\beta_\lambda$, where
$\hat\beta_\lambda$ is the (assumed to be unique) element of
$\SpenL(y)$ for fixed $y \in \R^n$ and $X \in \R^{n \times p}$.
Definition~\ref{def:nrc} below alludes to the notion of a solution
path. Note, however, that Definition~\ref{def:nrc} does not require
uniqueness of estimator.

\begin{definition}[Noiseless recovery condition] \label{def:nrc}
Let $\pen$ be a real-valued polyhedral gauge, $X \in \R^{n\times p}$
and $\beta \in \R^p$. We say that the pattern of $\beta$ satisfies the
noiseless recovery condition with respect to $X$ and $\pen$ if
$$
\exists \lambda > 0, \exists \hat\beta \in \SpenL(X\beta) \text{ \rm
such that } \hat \beta \overset{\pen}{\sim} \beta.
$$
\end{definition}
For instance, $\beta = 0$ satisfies the noiseless recovery condition
with respect to $X$ and $\pen$ since then $X\beta = 0$ and $0 \in
\SpenL(0)$. Another way of stating the noiseless recovery condition is
to require that in the noiseless case $Y = X\beta$, the solution path
contains a minimizer having the same pattern as $\beta$. The noiseless
recovery condition is illustrated for the supremum norm in
Figure~\ref{fig:path-sup} for the particular case where $X$ and
$\beta$ are given by
$$
X = \begin{pmatrix} 1 & 0 & 2 \\ 0 & 1 & 1 \end{pmatrix}
\text{ \rm and } \beta=(0,2,2)'.
$$

\begin{figure}[h!]
\centering
\begin{tikzpicture}[scale=0.5]
\draw (8.5,11) node[color=black]{\Large{Solution path for the supremum norm}};
\draw[very thin,color=gray,dotted] (-0.5,-0.5) grid (18,11.5);
\draw[->] (-0.5,0) -- (18,0) node[right] {$\lambda$};
\draw[->] (0,-0.5) -- (0,10.5);
\draw[color=red,dotted,line width=2pt] (0,10) -- (2,6.666);
\draw[color=blue,dotted,line width=2.5pt] (0,10) -- (2,6.666);
\draw[color=black,dotted,line width=1.5pt] (0,0) -- (2,6.666);
\draw[color=red,dotted,line width=2pt] (2,6.666) -- (15,0);
\draw[color=blue,dotted,line width=2.5pt] (2,6.666) -- (15,0);
\draw[color=black,dotted,line width=1.5pt] (2,6.666) -- (15,0);
\draw[color=red,dotted,line width=2pt] (15,0) -- (17.5,0);
\draw[color=blue,dotted,line width=2.5pt] (15,0) -- (17.5,0);
\draw[color=black,dotted,line width=1.5pt] (15,0) -- (17.5,0);
\draw[dashed] (15,0) -- (15,10.5);
\draw[dashed] (2,0) -- (2,10.5);
\draw (15,0) node[below]{$20$};
\draw (2,0) node[below]{$8/3$};
\draw (0,5) node[left]{$1$};
\draw (0,10) node[left]{$2$};

\draw[color=black,dotted,line width=1.5pt] (2.5,10) -- (3,10);
\draw (5.5,10) node[color=black]{1st component};
\draw[color=red,dotted,line width=2pt] (2.5,9.2) -- (3,9.2);
\draw (5.5,9.2) node[color=red]{2nd component};
\draw[color=blue,dotted,line width=2.5pt] (2.5,8.4) -- (3,8.4);
\draw (5.5,8.4) node[color=blue]{3rd component};
\draw (16.5,1.5) node{\small{${\rm patt}_{\infty}=$}};
\draw (16.5,0.5) node{\small{$(0,0,0)'$}};
\draw (7.5,1.5) node{\small{${\rm patt}_{\infty}=$}};
\draw (7.5,0.5) node{\small{$(1,1,1)'$}};
\draw (1.2,1.5) node{\small{${\rm patt}_{\infty}=$}};
\draw (1,0.5) node{\small{$(0,1,1)'$}};
 \end{tikzpicture}
 
\caption{\label{fig:path-sup} Shown are the curves of the three
component functions $\lambda \mapsto \hat\beta_{\lambda,1}$ (black
dotted curve), $\lambda \mapsto \hat\beta_{\lambda,2}$ (red dotted
curve) and $\lambda \mapsto \hat\beta_{\lambda,3}$ (blue dotted curve)
for $\lambda > 0$, where $\{\hat\beta_\lambda\} = \SsupL(X\beta)$.
Note that $\patt_\infty(\beta)$ satisfies the noiseless recovery
condition. Indeed, $\patt_\infty(\hat\beta_\lambda) = (0,1,1)'$ for
$\lambda \in (0,8/3)$.}

\end{figure}
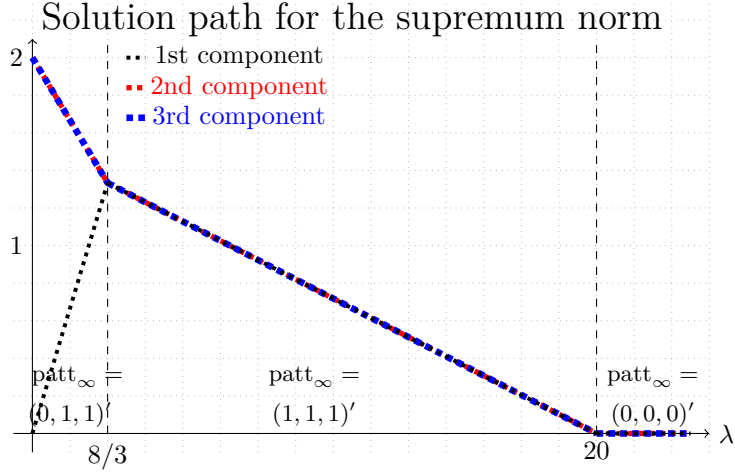 

In Proposition~\ref{prop:nrc-geom} in Section~\ref{sec:recovery}, we
prove that the noiseless recovery condition occurs if and only if
$X'X\lin(C_\beta) \cap \dpen(\beta) \neq \emptyset$. Based on this
characterization, it is clear that the condition indeed depends on $\beta$
only through its pattern. For an analytic expression for checking the
noiseless recovery condition, some formulas are given in the
literature. For example, when $\pen = \|.\|_1$, the noiseless recovery
condition can be shown to be equivalent to
\begin{equation} \label{eq:irrep_lasso}
\|X'(X_I')^+\sign(\beta_I)\|_\infty \le 1 \text{ \rm and }\sign(\beta_I)\in \row(X_I),    
\end{equation}
where $I =\{j \in [p] : \beta_j \neq 0\}$. Note that if $\ker(X_I) =
\{0\}$, we have $\sign(\beta_I)\in \row(X_I)$ and
expression~\eqref{eq:irrep_lasso} coincides with the well-known
\emph{irrepresentability} or \emph{mutual incoherence condition or non-degenerate condition} for
the LASSO given by $\|X_{I^c}'X_I(X_I'X_I)^{-1}\sign(\beta_I)\|_\infty
\leq 1$ \citep{BuehlmannVdGeer11,Wainwright09,Zou06,ZhaoYu06}. Thus,
the irrepresentability condition for the LASSO can be thought of as an
analytical shortcut for checking the noiseless recovery condition. For
the sorted-$\ell_1$-norm, when $m = \pattSLOPE(\beta)$, the noiseless
recovery condition is equivalent to
$$
\|X'(\tilde X'_m)^+\tilde W_m\|^*_w \le 1 \text{ \rm and } \tilde W_m
\in \row(\tilde X_m),
$$
where $\|.\|^*_w$ is the dual sorted-$\ell_1$-norm,  $\tilde X_m$ is
the so-called clustered matrix and $\tilde W_m$ is the clustered
parameter, see \cite{BogdanEtAl25} for details or
\cite{VaiterEtAl15,VaiterEtAl18} for similar expressions. In
Proposition~\ref{prop:nrc-sup} in Appendix~\ref{subapp:nrc-sup}, we
also provide an analytic characterization of the noiseless recovery
condition for the supremum norm: Let $I =\{j \in [p]: |\beta_j| <
\|\beta\|_\infty\}$, $\tilde X = (\tilde X_1|X_{ I})$ where $\tilde
X_1 = X_{I^c}\sign(\beta_{I^c})$. The noiseless recovery condition
holds if and only if
$$
\|X'(\tilde X')^+e_1\|_1 \leq 1 \text{ \rm and } e_1 \in \row(\tilde X), 
\text{ \rm where } e_1 = (1,0,\dots,0)'.
$$
Figure~\ref{fig:path-sup} confirms this characterization. Indeed, in
the above example we have
$$
\tilde X = \begin{pmatrix} 2 & 1 \\ 2 & 0  \end{pmatrix}, \; e_1=(1,0)' \text{ \rm and }
X'(\tilde X')^+ e_1 = (0,1/2,1/2)'
$$
and based on Figure~\ref{fig:path-sup} one may observe that the
noiseless recovery condition holds for $\beta$. Subsequently, we show
that

\begin{enumerate}

\item The noiseless recovery condition is a necessary condition for
pattern recovery with probability larger than $1/2$, see
Theorem~\ref{thm:irrep-cond}.

\item Thresholded penalized estimators recover the pattern of $\beta$
under much weaker condition than the noiseless recovery condition, see
Section~\ref{sec:recovery-thres}.

\end{enumerate}

\begin{theorem} \label{thm:irrep-cond}
Let $Y = X\beta + \eps$ where $X \in \R^{n \times p}$ is a fixed
matrix, $\beta \in \R^p $ and $\eps$ follows a symmetric distribution.
Let $\pen$ be a real-valued polyhedral gauge. If $\beta$ does not
satisfy the noiseless recovery condition with respect to $X$ and
$\pen$, then
$$
\P\left(\exists \lambda > 0 \; \exists \hat\beta \in
S_{X,\lambda\pen}(Y) \text{ \rm such that } \hat\beta \penequiv
\beta\right) \leq 1/2.
$$
\end{theorem}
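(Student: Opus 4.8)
The plan is to exploit the symmetry of the noise together with the geometric characterization of the noiseless recovery condition to produce a ``bad event'' of probability at least $1/2$ on which the pattern of $\beta$ cannot be recovered for any $\lambda$. The key observation is that recovery of the pattern of $\beta$ at noise realization $\eps$ and recovery at $-\eps$ are, in a suitable sense, mutually exclusive when the noiseless recovery condition fails. First I would fix notation: write $C = C_\beta$ for the pattern equivalence class, $V = \lin(C)$ and recall from Theorem~\ref{thm:patt-cones} and the discussion of Theorem~\ref{thm:nrc-geom} that the failure of the noiseless recovery condition means $X'X\, V \cap \dpen(\beta) = \emptyset$, equivalently (via Proposition~\ref{prop:acc}-type reasoning applied to the ``effective'' design on the subspace $V$) that there is no $\lambda>0$ and no $\hat\beta \in \SpenL(X\beta)$ with $\hat\beta \penequiv \beta$. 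The heart of the argument is to show that if, for some realization $y = X\beta + \eps$, there exists $\lambda > 0$ and $\hat\beta \in \SpenL(y)$ with $\hat\beta \penequiv \beta$, then for the reflected realization $y' = X\beta - \eps$ \emph{no} such $\lambda$ and $\hat\beta$ can exist.

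The mechanism behind this dichotomy is a convexity/reflection argument. Suppose $\hat\beta \in \SpenL(X\beta + \eps)$ has pattern $C$, and suppose also $\tilde\beta \in S_{X,\tilde\lambda\pen}(X\beta - \eps)$ has pattern $C$ for some $\tilde\lambda$. From the stationarity (KKT) conditions, $X'(X\hat\beta - X\beta - \eps) \in -\lambda\, \dpen(\hat\beta) = -\lambda\, \dpen(\beta)$ and similarly $X'(X\tilde\beta - X\beta + \eps) \in -\tilde\lambda\,\dpen(\beta)$, using that $\dpen$ depends on a point only through its pattern. I would then form a suitable convex combination $\bar\beta = t\hat\beta + (1-t)\tilde\beta$ chosen so that the $\eps$ terms cancel — concretely, combining the two inclusions with weights proportional to $1/\lambda$ and $1/\tilde\lambda$ (after rescaling) yields $X'(X\bar\beta - X\beta) \in -\bar\lambda\,\dpen(\beta)$ for some $\bar\lambda > 0$, with the $\pm\eps$ contributions cancelling. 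Since $\dpen(\beta)$ is the relevant face of $B^*$ and since convex combinations of points with pattern $C$ again have pattern $C$ (the class $C$ is the relative interior of a normal cone, hence convex), $\bar\beta$ satisfies the KKT conditions for the problem $\SpenZ$ at $y = X\beta$ with penalty $\bar\lambda\pen$ and has $\bar\beta \penequiv \beta$ — but that is exactly the noiseless recovery condition, a contradiction. Hence the two events are disjoint (up to a null set where $\eps$ lands in some measure-zero set).

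Having established disjointness, I would finish via symmetry: let $R = \{\eps : \exists \lambda > 0,\ \exists \hat\beta \in \SpenL(X\beta+\eps) \text{ with } \hat\beta \penequiv \beta\}$. The reflection map $\eps \mapsto -\eps$ sends $R$ into $R^c$ (modulo a null set), and since $\eps$ is symmetric, $\P(\eps \in R) = \P(-\eps \in R) \le \P(\eps \in R^c) = 1 - \P(\eps \in R)$, giving $\P(\eps \in R) \le 1/2$, which is the claim. The main obstacle I anticipate is the convex-combination step: I need to handle the two (possibly different) tuning parameters $\lambda, \tilde\lambda$ cleanly, make sure the combined point genuinely has pattern exactly $C$ rather than some lower-complexity face in its closure (this is where using $\relint$ of the normal cone / the convexity of $C$ is essential), and rule out the measure-zero boundary cases. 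A secondary subtlety is that $\SpenL(y)$ need not be a singleton, so the KKT inclusion must be stated for the chosen minimizer and the argument carried out without invoking uniqueness, consistent with Definition~\ref{def:nrc}.
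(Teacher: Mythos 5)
Your proposal is correct and follows essentially the same route as the paper: the paper packages your midpoint argument as the convexity of the set $V_\beta = \{y : \exists \lambda>0\,\exists\hat\beta\in\SpenL(y),\ \hat\beta\penequiv\beta\}$ (using a lemma showing that convex combinations of points with equal subdifferentials retain that subdifferential, which is your $\relint$-of-normal-cone observation), and then applies the same symmetry argument to $X\beta\pm\eps$. The only small correction is in the combination step: the right weights are simply $1/2$ and $1/2$ on the two KKT inclusions (so the $\pm\eps$ terms cancel and $\bar\lambda = (\lambda+\tilde\lambda)/2$, using convexity of the face $\dpen(\beta)$), not weights proportional to $1/\lambda$ and $1/\tilde\lambda$; also, the disjointness of $R$ and $-R$ is exact, so no null-set caveat is needed.
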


By Theorem~\ref{thm:irrep-cond}, if the noiseless recovery condition
does not hold for the LASSO (for example, when $\|X_{\overline
I}'X_I(X_I'X_I)^{-1}\sign(\beta_I)\|_\infty > 1$), then
$$
\P(\exists \lambda>0 \; \exists \hat\beta \in \SlassoL(Y) \text{ \rm
such that } \sign(\hat\beta) = \sign(\beta)) \leq 1/2.
$$
This above result is stronger than the one given in Theorem~2 in
\cite{Wainwright09} which shows that $\P(\sign(\betaLASSO(\lambda)) =
\sign(\beta)) \leq 1/2$ for fixed $\lambda > 0$.
Theorem~\ref{thm:irrep-cond} demonstrates that the noiseless recovery
condition can be viewed as a unified irrepresentability condition in a
general penalized estimation framework which may also be seen as more
interpretable than typical representations of the irrepresentability
condition. 

\subsection*{The gap between accessibility and noiseless recovery}

Clearly, if the pattern of $\beta$ satisfies the noiseless recovery
condition with respect to $X$ and $\pen$, the pattern of $\beta$ is
accessible with respect to $X$ and $\pen$. Indeed, if noiseless
recovery occurs, there exists $\lambda_0 > 0$ and $\hat\beta \in
S_{X,\lambda_0\pen}(X\beta)$ such that $\hat\beta \penequiv \beta$,
consequently the pattern of $\beta$ is accessible with respect to $X$
and $\lambda_0\pen$ or, equivalently, with respect to $X$ and $\pen$.
The following proposition gives a geometric criterion for noiseless
recovery and allows to better understand the connection between
accessibility and noiseless recovery.

\begin{proposition} \label{prop:nrc-geom} 
Let $X \in \R^{n \times p}$ and $\pen:\R^p \to \R$ be a polyhedral
gauge. Let $\beta \in \R^p$. Then $\beta$ satisfies the noiseless
recovery condition with respect to $X$ and $\pen$ if and only if
$$
X'X\lin(C_\beta) \cap \dpen(\beta) \neq \emptyset.
$$
\end{proposition}

For more insights into the connection to accessibility, note that by
Proposition~\ref{prop:acc}, accessibility holds if and only if
$\row(X) \cap \dpen(\beta) \neq \emptyset$ and, loosely speaking,
accessibility is harder to satisfy if the complexity of the pattern is
large since then the dimension of $\dpen(\beta)$ is small. However,
since $\row(X) = \col(X') = \col(X'X)$, the accessibility criterion
can be understood as $X'X\R^p \cap \dpen(\beta) \neq 0$, showing that
when the dimension of $\lin(C_\beta)$, i.e. the complexity of the
pattern of $\beta$, is large, the gap between accessibility and
noiseless recovery becomes small. This is illustrated in
Figure~\ref{fig:slope_acc-irr} for a simple SLOPE pattern in
Section~\ref{sec:num-exp} yielding a large gap between accessibility
and noiseless recovery.

In the following section, we show that thresholded penalized
least-squares estimators recover the pattern of $\beta$ under the
accessibility condition only, provided that the noise is small enough
and uniqueness holds.

\section{Nested Patterns and Pattern Recovery by Thresholded Penalized
Estimation} \label{sec:recovery-thres}

In practical applications, such as in genetics, many columns of the
design matrix $X$ may be irrelevant, implying that most components of
$\beta$ could be null. It is well known that the $\ell_1$-norm is the
appropriate convex penalty to promote sparsity
\citep{TraonmilinEtAl24}, making the LASSO the natural estimator for
such applications. However, even with a well chosen tuning parameter,
the LASSO estimator might still not have enough zero components to
sufficiently discard irrelevant columns. It can therefore be natural
to set small components of $\betaLASSO$ to zero and so consider the
thresholded LASSO estimator $\betaLASSOtau$ for some threshold $\tau
\geq 0$. In fact, if the threshold is appropriately selected, the
estimator allows to recover $\sign(\beta)$, the LASSO pattern of
$\beta$, under weaker conditions than LASSO itself
\citep{TardivelBogdan22}. This is because the LASSO tends to
``overfit'' the sign of $\beta$ for a sufficiently strong signal of
$\beta$. In this section, we give the mathematical explanation of this
phenomenon within the general penalized estimation framework and
discuss what conclusions may be drawn for a general concept of
``thresholding'' for a penalized estimator.

Along these lines, note that for any threshold $\tau \geq 0$, the
inclusion $\dlasso(\betaLASSO) \subseteq \dlasso(\betaLASSOtau)$
holds. This observation is helpful for understanding the following
theorem and to motivate how to ``threshold'' in a general setting.

\begin{theorem} \label{thm:overfit}
Let $\pen$ be a real-valued polyhedral gauge, $X \in \R^{n \times p}$
and $\beta \in \R^p$. Let $y^{(r)} = X\beta + \eps^{(r)}$ where
$(\eps^{(r)})_{r\in \N}$ is a sequence in $\R^n$ such that $\lim_{r\to
\infty} \eps^{(r)}=0$. Assume that uniform uniqueness holds and let
$\hat\beta^{(r)}$ be the unique minimizer in $\SpenLr(y^{(r)})$, where
$\lim_{r \to \infty} \lambda^{(r)} = 0$. If the pattern of $\beta$ is
accessible with respect to $X$ and $\pen$, there exists $r_0 \in \N$
such that for all $r \geq r_0$
$$
\dpen(\widehat \beta^{(r)}) \subseteq \dpen(\beta).
$$
\end{theorem}
Theorem~\ref{thm:overfit} can be seen to corroborate Theorem~1 in
\cite{FadiliEtAl19} using mirror-stratifiable regularizers for the
particular case where the stratification is a partition of pattern
classes. However, the above Theorem~\ref{thm:overfit} and Theorem~1 in
\cite{FadiliEtAl19} significantly differ in both the asymptotic
regimes they consider as well as in the assumptions needed: In
Theorem~1 of \cite{FadiliEtAl19}, the number of observations, $n$,
tends to $+\infty$, whereas in Theorem~\ref{thm:overfit}, the design
matrix $X \in \R^{n \times p}$ is fixed and the noise tends to $0$. In
terms of assumptions on $\beta$, for Theorem~1 of \cite{FadiliEtAl19},
for the case when $y$ is the response of a linear regression model,
$\beta$ is the unique minimizer of a positive semi-definite quadratic
form for which $\pen$ is minimal, whereas the pattern of $\beta$ is
required to be accessible in Theorem~\ref{thm:overfit}.

For interpretation of the above theorem, we define a \emph{nesting
structure} for patterns by saying that the pattern of $\beta$ is
\emph{nested} in the pattern of $\tilde\beta$ if $\dpen(\tilde\beta)
\subseteq \dpen(\beta)$. Notice that this implies that the pattern of
$\beta$ is less complex than the pattern of $\tilde\beta$. In this
sense, Theorem~\ref{thm:overfit} shows that given uniqueness and
accessibility, the correct pattern is nested in the pattern of the
penalized estimator, provided that the signal of $\beta$ is large
enough. For the LASSO, this complies with Theorem~1 in
\cite{FadiliEtAl19} or Theorem~2 in \cite{PokarowskiEtAl22} proving
that, asymptotically, the support of the LASSO contains the support of
$\beta$. Another way of interpreting the theorem is that, for exact
pattern recovery, a penalized estimator should be ``thresholded'' like
the LASSO, by moving to a nested, less complex pattern. We illustrate
below what thresholding means for penalized estimators other than the
LASSO.

\begin{enumerate}

\item The penalty term $\|\cdot\|_{\infty}$ promotes clustering of
components that are maximal in absolute value: Once $|\hat\beta_j|<
\|\hat\beta\|_\infty$ but $|\hat\beta_j| \approx
\|\hat\beta\|_\infty$, it is quite natural to set $|\hat\beta_j| =
\|\hat\beta\|_\infty$. Let $\betathres$ be the estimator taking into
account this approximation, obtained after slightly modifying
$\hat\beta$. Then $\dsup(\hat\beta) \subseteq \dsup(\betathres)$.

\item The sorted-$\ell_1$-norm penalty promotes clustering of
components equal in absolute value: Once $|\betaSLOPE_j| \approx
|\betaSLOPE_i|$, it is quite natural to set $|\betaSLOPE_i|=
|\betaSLOPE_j|$. Let $\betathres$ be the estimator taking into account
this approximation and obtained after slightly modifying $\betaSLOPE$.
Then, $\dslope(\betaSLOPE) \subseteq \dslope(\betathres)$.

\item The penalty term $\|D^{tv}\cdot\|$ 
promotes neighboring components to be equal: Once $\hat\beta_j \approx
\hat \beta_{j+1}$, it is quite natural to set $\hat\beta_j =
\hat\beta_{j+1}$. Let $\betathres$ be the estimator taking into
account this approximation and obtained after slightly modifying
$\hat\beta$. Then, $\dgtv(\hat\beta) \subseteq \dgtv(\betathres)$.

\end{enumerate}

More precisely, we say that $\betathres$ is a thresholded version of
the estimator $\hat\beta$ with respect to the polyhedral gauge $\pen$
(or simply a thresholded estimator when there is no ambiguity), if the
pattern of $\betathres$ is nested in the pattern of $\hat\beta$,
namely when $\dpen(\hat\beta) \subseteq \dpen(\betathres)$. In view of
this general concept of thresholded estimators,
Theorem~\ref{thm:overfit} can also be read as showing how the
noiseless recovery condition from Section~\ref{sec:recovery} -- needed
for pattern recovery with probability larger than 1/2 -- can be
relaxed to the minimal condition of accessibility under a uniqueness
assumption when considering thresholded estimation: in fact, even sure
pattern recovery is then guaranteed, provided that the noise is small
enough.

For completeness, the proposition below shows that accessibility with
respect to a penalized estimator is equivalent to accessibility with
respect to a thresholded penalized estimator, demonstrating that the
accessibility in Theorem~\ref{thm:overfit} is indeed a necessary
condition.

\begin{proposition} \label{prop:acc-thres}
Let $\pen$ be a real-valued polyhedral gauge, $X \in \R^{n \times p}$,
$\lambda > 0$ and $\beta \in \R^p$. We have
\begin{align*}
\exists y \in \R^n,\ & \exists \hat\beta \in \SpenL(y) \text{ \rm
such that }  \hat\beta \penequiv \beta \\ 
\iff \; & \exists y \in \R^n, \exists \hat\beta \in \SpenL(y) \text{ \rm such
that } \dpen(\hat\beta) \subseteq \dpen(\beta).
\end{align*}
\end{proposition}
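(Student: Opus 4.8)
The plan is to dispatch the implication ``$\Rightarrow$'' trivially and to obtain ``$\Leftarrow$'' from the geometric characterization of accessibility in Proposition~\ref{prop:acc}. For ``$\Rightarrow$'': if $\hat\beta \in \SpenL(y)$ satisfies $\hat\beta \penequiv \beta$, then by definition $\dpen(\hat\beta) = \dpen(\beta)$, so in particular $\dpen(\hat\beta) \subseteq \dpen(\beta)$, which is the right-hand side with the same $y$ and $\hat\beta$.

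For ``$\Leftarrow$'', suppose there are $y \in \R^n$ and $\hat\beta \in \SpenL(y)$ with $\dpen(\hat\beta) \subseteq \dpen(\beta)$. First I would record the first-order optimality condition characterizing minimizers of \eqref{eq:pen_problem}: since the objective is the sum of the everywhere-finite and differentiable quadratic $b \mapsto \tfrac12\|y-Xb\|_2^2$ and the finite convex function $\lambda\pen$, the Moreau--Rockafellar sum rule yields that $\hat\beta \in \SpenL(y)$ if and only if $X'(y - X\hat\beta) \in \lambda\,\dpen(\hat\beta)$. Dividing by $\lambda > 0$, the vector $v := \lambda^{-1}X'(y-X\hat\beta)$ belongs to $\dpen(\hat\beta)$, and since $v \in \col(X') = \row(X)$ we conclude $v \in \row(X) \cap \dpen(\hat\beta) \subseteq \row(X) \cap \dpen(\beta)$. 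Hence $\row(X) \cap \dpen(\beta) \neq \emptyset$.

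It then remains to apply the geometric characterization of Proposition~\ref{prop:acc}(1): the condition $\row(X) \cap \dpen(\beta) \neq \emptyset$ says precisely that the pattern of $\beta$ is accessible with respect to $X$ and $\lambda\pen$, i.e.\ that there exist $y' \in \R^n$ and $\hat\beta' \in \SpenL(y')$ with $\hat\beta' \penequiv \beta$ --- which is exactly the left-hand side of the asserted equivalence. I do not anticipate a genuine obstacle here: the real work is already carried out in Proposition~\ref{prop:acc}, and the only points demanding slight care are stating the optimality condition correctly (the sum rule is available because both summands are finite convex, with the quadratic part differentiable) and rescaling by $\lambda$ so that the relevant vector actually lands in $\dpen(\hat\beta)$ rather than in $\lambda\,\dpen(\hat\beta)$. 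An alternative, essentially equivalent route would go through the analytic characterization in Proposition~\ref{prop:acc}(2), but the geometric one is the most economical.
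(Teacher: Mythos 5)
Your proposal is correct and follows essentially the same route as the paper: the forward implication is immediate from $\dpen(\hat\beta)=\dpen(\beta)$, and the reverse uses the optimality condition $\frac{1}{\lambda}X'(y-X\hat\beta)\in\dpen(\hat\beta)\subseteq\dpen(\beta)$ to conclude $\row(X)\cap\dpen(\beta)\neq\emptyset$ and then invokes the geometric characterization of Proposition~\ref{prop:acc}. The only difference is that you spell out the Moreau--Rockafellar justification of the optimality condition, which the paper takes for granted.
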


\section{A Necessary and Sufficient Condition for Uniform Uniqueness} \label{sec:unique}
In Proposition~\ref{prop:attainability},
Corollary~\ref{cor:attainability} and Theorem~\ref{thm:overfit} we
require uniform uniqueness, i.e., uniqueness of the penalized
optimization problem \eqref{eq:pen_problem} for a given $X \in \R^{n
\times p}$ for all $\lambda > 0$ and all $y \in \R^n$. We provide a
necessary and sufficient condition for this kind of uniqueness in
Theorem~\ref{thm:unique-gauge} below. This theorem relaxes the
coercivity condition for the penalty term needed in Theorem~1 in
\cite{SchneiderTardivel22} and extends the result to encompass methods
such as the generalized LASSO.

\begin{theorem}[Necessary and sufficient condition for uniform uniqueness] 
\label{thm:unique-gauge} 
Let $\pen$ be a real-valued polyhedral gauge, $X \in \R^{n \times p}$,
and $\lambda > 0$. Then the solution set $\SpenL(y)$ from
\eqref{eq:pen_problem} is a singleton for all $y \in \R^n$ if and only
if $\row(X)$ does not intersect a face of $B^*$ whose
dimension\footnote{The dimension of a face is defined as the dimension
of its affine hull, see Appendix~\ref{app:polytopes} for details.} is
strictly less than $\defect(X) = \dim(\ker(X))$.
\end{theorem}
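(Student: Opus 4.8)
The plan is to reduce uniqueness of \eqref{eq:pen_problem} to a statement purely about the polytope $B^*$, its faces, and how $\row(X)$ meets them, exploiting the correspondence (Lemma~\ref{lem:subdiff-faces} and Theorem~\ref{thm:patt-cones}) between faces of $B^*$, subdifferentials $\dpen(\beta)$, and pattern equivalence classes. First I would record the standard first-order optimality characterization: $\hat\beta \in \SpenL(y)$ iff $X'(y - X\hat\beta) \in \lambda\,\dpen(\hat\beta)$. The key observation is that if $\hat\beta_1, \hat\beta_2$ are two distinct minimizers for the same $y$, then by convexity the whole segment consists of minimizers, $X\hat\beta_1 = X\hat\beta_2$ (the fitted value is always unique, since $b \mapsto \tfrac12\|y - Xb\|_2^2$ is strictly convex in $Xb$), and all points on the relative interior of the segment share a common subdifferential, hence a common pattern equivalence class $C$ with $\hat\beta_2 - \hat\beta_1 \in \vec\aff(\aff C) \cap \ker(X)$, a nonzero vector. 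By Theorem~\ref{thm:patt-cones}, $\lin(C_\beta) = \vec\aff(\dpen(\beta))^\perp$, so $\dim\lin(C) = p - \dim(\dpen(\beta))$, i.e.\ the codimension of the corresponding face $F = \dpen(\beta)$ of $B^*$. Nonuniqueness therefore forces the existence of a face $F$ of $B^*$ with $\lin(C) \cap \ker(X) \neq \{0\}$ while $C$ is accessible, i.e.\ (Proposition~\ref{prop:acc}) $\row(X) \cap F \neq \emptyset$.

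Next I would turn the dimension bookkeeping into the stated condition. The space $\lin(C)$ has dimension $p - \dim(F)$; it meets $\ker(X)$, of dimension $\defect(X)$, nontrivially precisely when $(p - \dim F) + \defect(X) > p$, i.e.\ when $\dim F < \defect(X)$. So, combining with accessibility, nonuniqueness is equivalent to: there is a face $F$ of $B^*$ with $\dim F < \defect(X)$ and $\row(X) \cap F \neq \emptyset$. Conversely, for the sufficiency direction, if such a face $F$ exists, pick $b \in \row(X) \cap F$, let $C$ be the associated pattern class (a cone of dimension $p - \dim F > \defect(X)$ meeting $\ker X$), choose $0 \neq v \in \lin(C) \cap \ker(X)$ and a representative $\hat\beta \in C$ with $\hat\beta + tv \in C$ for small $|t|$; since $b \in F = \dpen(\hat\beta)$ and $b \in \row(X)$, one writes $b = X'z/\lambda$ for suitable $z$ and sets $y = X\hat\beta + z$, making both $\hat\beta$ and $\hat\beta + tv$ minimizers for this $y$ — witnessing nonuniqueness. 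This gives the equivalence, which is the contrapositive of the theorem.

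The step I expect to be the main obstacle is the sufficiency (nonuniqueness) direction, specifically verifying that the witness $y$ can genuinely be produced: one must ensure that a point $b \in \row(X) \cap F$ lies in $\dpen(\hat\beta)$ for a representative $\hat\beta$ of the pattern class $C_F$ that one can also nudge along a kernel direction while staying in $C_F$ — i.e.\ coordinating the accessibility witness with the kernel-shift witness. This requires knowing that $\dpen(\hat\beta) = F$ for \emph{every} $\hat\beta \in \relint(C_F)$ (which is exactly the content of Theorem~\ref{thm:patt-cones} / Lemma~\ref{lem:subdiff-faces}) and that $\relint(C_F)$ is an open cone of full dimension $p - \dim F$ inside $\lin(C_F)$, so it contains points arbitrarily close to any chosen $\hat\beta \in \relint(C_F)$ in the $v$-direction. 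A secondary technical point is handling faces $F$ that are not full-dimensional in $\row(X)$ and the case $\defect(X) = 0$ (where the condition is vacuous and the classical full-column-rank uniqueness is recovered); both are routine once the face/cone dictionary is in place. For the necessity direction the argument above is essentially complete modulo citing that the relative interiors of normal cones of $B^*$ partition $\R^p$, so that a segment of minimizers lies in a single $C_F$.
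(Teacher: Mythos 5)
Your sufficiency direction (a face $F$ with $\row(X)\cap F\neq\emptyset$ and $\dim F<\defect(X)$ forces non-uniqueness) is sound and in fact slightly cleaner than the paper's: by invoking Theorem~\ref{thm:patt-cones} to get $\lin(C_F)=\vec\aff(F)^\perp$ you obtain the kernel direction $v$ from the single dimension count $(p-\dim F)+\defect(X)>p$, whereas the paper works with $\col(U)^\perp$ for $U=(u_l)_{l\in I}$ and needs a two-case analysis according to whether $0\in\aff\{u_l:l\in I\}$. The construction of the witness $y=X\hat\beta+\lambda z$ and the perturbation $\hat\beta+tv$ staying in $C_F$ for small $t$ are exactly as in the paper.

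The genuine gap is in the necessity direction, which you dismiss as "essentially complete." From two distinct minimizers you correctly extract a face $F=\dpen(\hat\beta)\cap\dpen(\tilde\beta)$ meeting $\row(X)$ together with a nonzero $h=\hat\beta-\tilde\beta\in\lin(C_F)\cap\ker(X)$. But your step "it meets $\ker(X)$ nontrivially \emph{precisely when} $(p-\dim F)+\defect(X)>p$" is false in the only-if direction: two subspaces can intersect nontrivially even when their dimensions sum to at most $p$, so $\lin(C_F)\cap\ker(X)\neq\{0\}$ does \emph{not} imply $\dim F<\defect(X)$. Indeed the face you produce may well have $\dim F\geq\defect(X)$; the theorem only asserts that \emph{some} face of small dimension meets $\row(X)$, and that face is generally a proper subface of $F$. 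This is where the real work of the paper's proof lies: it chooses $F^*$ of minimal dimension among the faces of $\dpen(\hat\beta)\cap\dpen(\tilde\beta)$ intersecting $\row(X)$ (so that $\row(X)$ meets $\relint(F^*)$), and shows that if $\dim F^*\geq\defect(X)$ then, because both $\row(X)$ and $\vec\aff(F^*)$ lie in $h^\perp$, they intersect in a nonzero direction $\tilde v$; the affine line $X'z+t\tilde v$ then hits a proper face of $F^*$ by Lemma~\ref{lem:polytope-facts}, contradicting minimality. Without this minimal-face/affine-line argument (or an equivalent descent), your plan does not establish the stated dimension bound.
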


\noindent
Note that a face $F$ of $B^*$ satisfies
$$
\dim(F) < \defect(X) \iff \codim(F) > \rk(X),
$$ 
where $\codim(F) = p - \dim(F)$. Using Corollary~\ref{cor:compl-codim}
and Proposition~\ref{prop:acc}, we may therefore conclude the
following result. The recent preprint of \cite{EverinkEtAl24TR}
generalizes Theorem~\ref{thm:unique-gauge} by proving that uniform
uniqueness implies continuity of the solution with respect to $y$.

\begin{corollary} \label{cor:unique-compl}
Let $\pen$ be a real-valued polyhedral gauge, $X \in \R^{n \times p}$,
and $\lambda > 0$. Then the optimization problem in
\eqref{eq:pen_problem} is uniquely solvable for all $y \in \R^n$ if
and only if no pattern with complexity exceeding $\rk(X)$ is
accessible.
\end{corollary}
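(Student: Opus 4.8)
The plan is to chase the chain of equivalences already in place: Theorem~\ref{thm:unique-gauge} characterizes uniform uniqueness geometrically as $\row(X)$ avoiding all sufficiently low-dimensional faces of $B^*$, Proposition~\ref{prop:acc} turns ``$\row(X)$ meets a face'' into ``the pattern attached to that face is accessible'', and Corollary~\ref{cor:compl-codim} converts the dimension bound on the face into a complexity bound on the pattern. So essentially no new argument is needed beyond bookkeeping and the face--subdifferential correspondence.

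First I would record the translation of the dimension condition, which is the display stated just after Theorem~\ref{thm:unique-gauge}: for any face $F$ of $B^*$ we have $\codim(F) = p - \dim(F)$, and by rank--nullity $\rk(X) + \defect(X) = p$ with $\defect(X) = \dim(\ker(X))$; hence $\dim(F) < \defect(X)$ if and only if $\codim(F) = p - \dim(F) > p - \defect(X) = \rk(X)$. Next I would invoke Lemma~\ref{lem:subdiff-faces}: every face $F$ of $B^*$ is of the form $\dpen(\beta)$ for some $\beta \in \R^p$, and conversely each $\dpen(\beta)$ is a face of $B^*$, so the quantifier ``over faces $F$ of $B^*$'' and the quantifier ``over $\beta \in \R^p$ via $\dpen(\beta)$'' are interchangeable. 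By Corollary~\ref{cor:compl-codim} the complexity of the pattern of $\beta$ equals $\codim(\dpen(\beta))$, so under this correspondence the faces with $\dim(F) < \defect(X)$ are exactly the subdifferentials of parameters whose pattern has complexity strictly larger than $\rk(X)$. Finally, the geometric part of Proposition~\ref{prop:acc} states that the pattern of $\beta$ is accessible with respect to $X$ and $\pen$ if and only if $\row(X) \cap \dpen(\beta) \neq \emptyset$, i.e.\ precisely when $\row(X)$ intersects the corresponding face of $B^*$.

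Assembling these pieces: Theorem~\ref{thm:unique-gauge} says $\SpenL(y)$ is a singleton for all $y \in \R^n$ if and only if $\row(X)$ intersects no face $F$ of $B^*$ with $\dim(F) < \defect(X)$. By the two previous paragraphs, ``$\row(X)$ intersects such a face'' is equivalent to ``there exists $\beta \in \R^p$ with $\row(X) \cap \dpen(\beta) \neq \emptyset$ and $\codim(\dpen(\beta)) > \rk(X)$'', which by Proposition~\ref{prop:acc} and Corollary~\ref{cor:compl-codim} is exactly ``some pattern of complexity exceeding $\rk(X)$ is accessible''. Negating both sides of this equivalence yields the claimed characterization.

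The only point that requires a little care is that the quantifier over faces $F$ appearing in Theorem~\ref{thm:unique-gauge} genuinely matches the quantifier over accessible patterns used in the corollary; this is precisely what the one-to-one relationship between faces of $B^*$ and subdifferentials $\dpen(\beta)$ provided by Lemma~\ref{lem:subdiff-faces} guarantees, so I do not anticipate a real obstacle. In effect, Corollary~\ref{cor:unique-compl} is Theorem~\ref{thm:unique-gauge} re-expressed in pattern-theoretic language, the dictionary being supplied entirely by Corollary~\ref{cor:compl-codim} (complexity $=$ codimension of the face) and Proposition~\ref{prop:acc} (accessibility $=$ the face being met by $\row(X)$).
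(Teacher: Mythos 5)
Your proposal is correct and follows exactly the paper's route: the paper derives Corollary~\ref{cor:unique-compl} from Theorem~\ref{thm:unique-gauge} via the same observation that $\dim(F) < \defect(X) \iff \codim(F) > \rk(X)$, combined with Corollary~\ref{cor:compl-codim} (complexity equals codimension of the face) and Proposition~\ref{prop:acc} (accessibility equals $\row(X)$ meeting the face), with Lemma~\ref{lem:subdiff-faces} supplying the correspondence between non-empty faces and subdifferentials. Nothing is missing.
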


For instance the generalized Lasso with total variation penalty is
uniquely solvable for all $y \in \R^n$ if and only if no pattern with
more than or equal to $\rk(X)$ jumps is accessible. For a better
understanding of whether uniqueness is a ``typical'' property, we give
the proposition stated below. Along these lines, observe that for the
methods listed in the introduction, the corresponding gauges are
symmetric: $\pen(b) = \pen(-b)$. For these cases,
Proposition~\ref{prop:unique-sym_gauge} shows that the set of
solutions of the penalized least squares problem is unbounded if
$\dim(\{b \in \R^p : \pen(b) = 0\}) > n$ or always a singleton if
$\dim(\{b\in \R^p : \pen(b)= 0\}) \leq n$.

\begin{proposition} \label{prop:unique-sym_gauge}
Let $\pen$ be a symmetric real-valued polyhedral gauge on $\R^p$,
namely, $\pen(b) = \max\{\pm u_1'b,\dots,\pm u_k'b\}$ for some
$u_1,\dots,u_k \in \R^p$. Then $\ker(\pen) = \{b \in \R^p : \pen(b) =
0\}$ is a vector space and the following holds.

\begin{enumerate}

\item \label{enum:X} If $\dim(\ker(\pen)) > n$, then
for any $X \in \R^{n \times p}$ the set $\SpenL(y)$ is unbounded for
all $y \in \R^n$ and all $\lambda > 0$.

\item \label{enum:mu} If $\dim(\ker(\pen)) \leq n$,
then
$$
\mu\left(\left\{X \in \R^{n\times p} : \exists y \in \R^n\; \exists \lambda > 0 \text{ with }
|\SpenL(y)| > 1\right\}\right) = 0,
$$
where $\mu$ is the Lebesgue measure on $\R^{n \times p}$.

\end{enumerate}

\end{proposition}

Proposition~\ref{prop:unique-sym_gauge} is consistent with the
findings in \cite{AliTibshirani19} for the special case of the
generalized Lasso.

We now illustrate a case of non-uniqueness occurring for the
generalized LASSO with $\pen(b) = \|Db\|_1$ for some $D \in \R^{m
\times p}$. Clearly,  $\ker(X) \cap \ker(D)= \{0\}$ is a necessary
condition for uniform uniqueness, yet, it is not sufficient, as
illustrated in the example below.

\begin{example}
An example of generalized LASSO optimization problem for which the set
of minimizers is not restricted to a singleton is given in
\cite{BarbaraEtAl19}:
$$
\Argmin_{b \in \R^p} \frac{1}{2}\|y-Xb\|_2^2+\frac{1}{2}\|Db\|_1 \text{ \rm where }
X = \begin{pmatrix} 1& 1 & 1 \\ 3 & 1 & 1 \\ \sqrt{2} & 0 & 0 \end{pmatrix},\;
D=\begin{pmatrix}1 & 1 & 0 \\ 1 & 0 & 1 \\ 2 & 1 & 1\end{pmatrix}
\text{ \rm and }y=\begin{pmatrix} 1 \\ 1 \\ 0 \end{pmatrix}.$$
Note that $S_{X,\frac{1}{2}\|D.\|_1}(y) =
\conv\{(0,1/2,0)',(0,0,1/2)'\}$. Since
$$
\|Db\|_1 = \max\{\pm(4b_1+2b_2+2b_3),\pm(2b_1+2b_2),\pm(2b_1+2b_3)\},
$$
we have $B^* = \conv\{\pm (4,2,2)',\pm (2,2,0)',\pm(2,0,2)'\}$.
Because the vertex $F = (4,2,2)'$ is an element of $\row(X)$ and
satisfies $\codim(F) = 3 - \dim(F) = 3 - 0 > 2 = \rk(X)$, uniform
uniqueness cannot hold. This of course complies with the fact that
$S_{X,\frac{1}{2}\|D.\|_1}(y)$ is not a singleton.
\end{example}
When $\ker(X) \cap \ker(D) = \{0\}$, in broad generality,  the set of
generalized LASSO minimizers is a polytope, i.e., a bounded polyhedron
\citep{BarbaraEtAl19}, and extremal points can be computed explicitly
\citep{DupuisVaiter23}. This description is relevant when the set of
minimizers is not a singleton.

\section{Numerical Illustrations} \label{sec:num-exp}

In this section, we present numerical experiments to illustrate the
connection between the accessibility and noiseless recovery condition
as well as the concept of thresholding based on the SLOPE method. The
simulations were carried out in Python. For our simulations, we fix
$\beta \in \{0,1\}^{784}$. The particular value we use is depicted in
Figure~\ref{fig:six_original}, giving a visualization when the vector
is reshaped as a plot of size $28 \times 28$, where $0$ represents a
white pixel and $1$ represents a black pixel.

\begin{figure}[h!]
\centering
\includegraphics[scale=0.6]{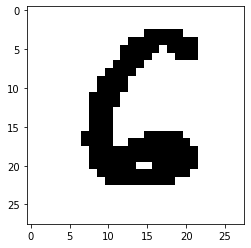}
\caption{The vector $\beta \in \{0,1\}^{784}$, reshaped as a picture of
size $28 \times 28$, represents the number six.}
\label{fig:six_original}

\end{figure}

For the weights of the sorted-$\ell_1$-norm, we choose $w =
(\sqrt{j}-\sqrt{j-1})_{1 \leq j \leq 784}$ as suggested in
\cite{Nomura20TR}.

\subsection*{Comparison between accessibility and noiseless recovery using SLOPE}

We first illustrate how likely it is that $\beta$ satisfies the
accessibility or the noiseless recovery condition. For this, we use
the following setup: Let $X \in \R^{n \times 784}$ be a matrix whose
coefficients are independent and identically $\mN(0,1/n)$-distributed.
Using Proposition~\ref{prop:acc}, we see that the probability that
$\beta$ is accessible with respect to $X$ and $\pen = \|.\|_w$ is
given by
$$
\P_X(\min\{\|b\|_w: Xb = X\beta\} = \|\beta\|_w).
$$
Furthermore, the probability that the noiseless recovery condition
holds for $\beta$ is
$$
\P_X(X'X\lin(C_\beta) \cap \dslope(\beta) \neq \emptyset)
$$
by Proposition~\ref{prop:nrc-geom}. Based on these formulae,
Figure~\ref{fig:slope_acc-irr} reports these probabilities as a
function of the number of rows of the matrix $X$. To approximate the
probabilities, we used $1000$ realizations of the matrix $X$. As can
be seen, accessibility is far more likely to occur than the noiseless
recovery condition.

\begin{figure}[h!]
\centering

\begin{subfigure}{0.49\textwidth}
\centering
\includegraphics[scale=0.4]{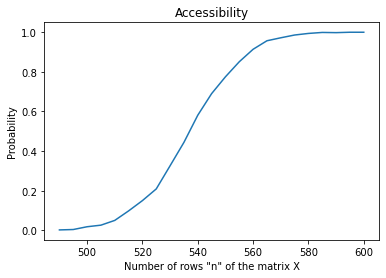}
\subcaption{\label{subfig:slope_accessibility}} 
\end{subfigure}
\begin{subfigure}{0.49\textwidth}
\centering
\includegraphics[scale=0.4]{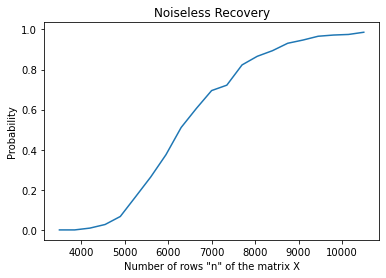}
\subcaption{\label{subfig:slope_irrepresentability}}
\end{subfigure}

\caption{\label{fig:slope_acc-irr} The probability of the
accessibility condition (\subref{subfig:slope_accessibility}) vs the
noiseless recovery condition
(\subref{subfig:slope_irrepresentability}) being satisfied for the
SLOPE pattern $\beta$ as a function of $n$, the number of rows of the
matrix $X$. While the shapes are qualitatively similar, note the
difference in ranges on the x-axes. The probability of accessibility
is almost zero when $n \leq 500$ and almost $1$ when $n \geq 600$. For
the noiseless recovery condition, the probability is almost zero when
$n \leq 4000$ and almost $1$ when $n \geq 10000$.}

\end{figure}

\subsection*{Pattern recovery by thresholded SLOPE}

For SLOPE, a practical way to construct  a thresholded estimator is to
apply the proximal operator of the sorted-$\ell_1$-norm to the SLOPE
estimator $\hat\beta$
$$
\prox_\tau(\hat\beta) = \Argmin_{b \in \R^p} \left\{\frac{1}{2}\|\hat\beta - b\|_2^2 + \tau\|b\|_w\right\},
$$
where $\tau \geq 0$ tunes the complexity of the thresholded estimator,
see \cite{TardivelEtAl20} and \cite{DupuisTardivel22} for an explicit
expression of this procedure. For the following, we consider the
Gaussian linear regression model $Y = X\beta + \eps$, where $X \in
\R^{600 \times 784}$ is a matrix whose coefficients are independent
and identically $\mN(0,1/600)$-distributed and the components of $\eps
\in \R^{600}$ are independent and identically distributed according to
a $\mN(0,0.05^2)$-distribution. For particular realizations $y$ of $Y$
and $X$, we denote $\hat\beta_\lambda$ the unique element of
$S_{X,\lambda\|.\|_w}(y)$ and $\lambda_\sure$ the tuning parameter
selected via the SURE formula for SLOPE \citep{Minami20}; namely
$\lambda_\sure$ is a minimizer of the function $\lambda > 0 \mapsto
\|y - X\hat\beta_\lambda\|_2^2 - 600\times 0.05^2 + 2 \times
0.05^2\|\pattSLOPE(\hat\beta_\lambda)\|_\infty$. In
Figure~\ref{fig:SLOPE_thres-SLOPE} we illustrate that, for $n=600$,
SLOPE cannot fully recover the SLOPE pattern $\beta$, whereas
thresholded SLOPE can recover this pattern.

\begin{figure}[h!]
\centering

\begin{subfigure}{0.49\textwidth}
\centering
\includegraphics[scale=0.55]{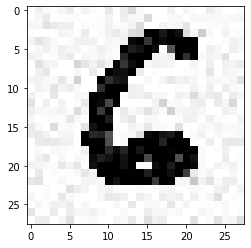}
\subcaption{\label{subfig:six_SLOPE}} 
\end{subfigure}
\begin{subfigure}{0.49\textwidth}
\centering
\includegraphics[scale=0.55]{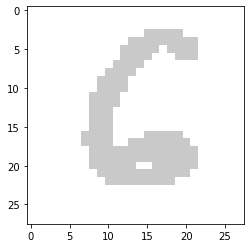}
\subcaption{\label{subfig:six_thres-SLOPE}}
\end{subfigure}

\caption{\label{fig:SLOPE_thres-SLOPE} Pattern recovery by SLOPE
(\subref{subfig:six_SLOPE}) vs thresholded SLOPE
(\subref{subfig:six_thres-SLOPE}): Since the noiseless recovery does
not hold for the particular matrix $X$, the SLOPE estimator
$\hat\beta_{\lambda_\sure}$ is unlikely to recover the SLOPE pattern
$\beta$. Indeed, (\subref{subfig:six_SLOPE}) shows that
$\hat\beta_{\lambda_\sure}$, reshaped as a picture of size $28 \times
28$, does not recover the SLOPE pattern $\beta$. On the other hand,
the accessibility condition does hold and therefore thresholded SLOPE
can reveal the SLOPE pattern $\beta$. In fact,
(\subref{subfig:six_thres-SLOPE}) illustrates that
$\prox_\tau(\hat\beta_{\lambda_\sure})$, reshaped as a picture of size
$28 \times 28$, does recover the SLOPE pattern $\beta$ (here, $\tau >
0$ was chosen as the smallest real number for which
$\prox_\tau(\hat\beta_{\lambda_\sure})$ and $\beta$ have the same
complexity).}

\end{figure}

\section{Conclusion} \label{sec:concl}

This article introduces the concept of patterns as an equivalence
classes for vectors sharing the same subdifferential, also giving a
geometric description of these classes. In view of a linear regression
framework, this article then establishes theoretical properties, based
on an accessibility or noiseless recovery condition, under which a
penalized estimator, or a thresholded variant, recovers the pattern of
the regression coefficients. Our approach offers a unified framework
to provide and interpret conditions that have previously been examined
in the literature within specific methods and in rather technical
settings only.

As a perspective, the notion of patterns could be leveraged to derive
the solution path of a penalized estimator. For instance, the concept
of SLOPE patterns is crucial in developing an algorithm for computing
the solution path for this estimator (\citet{DupuisTardivel24}). This
approach might be extended to other polyhedral gauges beyond the
sorted-$\ell_1$-norm by employing the appropriate pattern. Another
direction is the application of our results to pattern selection. For
the LASSO, \citet{PokarowskiEtAl22} perform subset selection by
constructing a nested family of subsets of the LASSO support and
selecting one via a model selection criterion. This methodology could
be generalized to polyhedral gauges other than the $\ell_1$-norm by
constructing a nested family of patterns.

\section*{Acknowledgments}

We thank Samuel Vaiter, Mathurin Massias, and Abderrahim Jourani for
their insightful comments. Patrick Tardivel was supported by the EIPHI
Graduate School (ANR-17-EURE-0002) and the Bourgogne-Franche-Comté
Region (EPADM project). Tomasz Skalski was supported by a French
Government Scholarship. The work of Piotr Graczyk and Tomasz Skalski
was conducted within the France 2030 program, Centre Henri Lebesgue
(ANR-11-LABX-0020-01). We also thank the anonymous referee for
constructive suggestions that helped to improve the paper.

\appendix

\section{Appdendix -- Facts about polytopes and polyhedral gauges}
\label{app:polytopes}

We recall some basic definitions and facts about polytopes which we
will use throughout the proofs. The following can be found in
textbooks such as \cite{Gruber07} and \cite{Ziegler12}.


A set $P \subseteq \R^p$ is called a \emph{polytope} if it is the
convex hull of a finite set of points $\{v_1,\dots,v_k\} \subseteq \R^p$,
namely,
$$
P = \conv\{v_1,\dots,v_k\}.
$$
The \emph{dimension} $\dim(P)$ of a polytope is defined as the
dimension of $\aff(P)$, the affine subspace spanned by $P$. An
inequality $a'x \leq c$ is called a \emph{valid inequality} of $P$ if
$P \subseteq \{x\in \R^p : a'x \leq c\}$. A \emph{face} $F$ of $P$ is
any subset $F \subseteq P$ that satisfies
$$
F = \{x \in P : a'x = c\} \text{ \rm for some } a \in \R^p \text{ \rm and } c
\in \R \text{ \rm with } P \subseteq \{x\in \R^p : a'x \leq c\}.
$$
Note that $F = \emptyset$ and $F = P$ are faces of $P$ and that any
face $F$ is again a polytope. A non-empty face $F$ with $F \neq P$ is
called \emph{proper}. A point $x_0 \in P$ lies in $\relint(P)$, the
\emph{relative interior} of $P$, if $x_0$ is not contained in a proper
face of $P$. We state two useful properties of faces in the following
lemma.


\begin{lemma} \label{lem:polytope-facts}
Let $P \subseteq \R^p$ be a polytope given by $P =
\conv\{v_1,\dots,v_k\}$, where $v_1,\dots,v_k \in \R^p$. The
following properties hold.

\begin{enumerate}

\item If $F$ and $\tilde F$ are faces of $P$, then so is $F \cap
\tilde F$.

\item Let $L$ be an affine line contained in the affine span of $P$.
If $L \cap \relint(P) \neq \emptyset$, then $L$ intersects a proper
face of $P$.

\end{enumerate}

\end{lemma}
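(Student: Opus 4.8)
\textbf{Proof plan for Lemma~\ref{lem:polytope-facts}.}

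For part (1), the plan is to use the definition of a face directly. Suppose $F = \{x \in P : a'x = c\}$ with $a'x \le c$ valid for $P$, and $\tilde F = \{x \in P : \tilde a'x = \tilde c\}$ with $\tilde a'x \le \tilde c$ valid for $P$. The natural candidate for the defining inequality of $F \cap \tilde F$ is the sum: set $b = a + \tilde a$ and $d = c + \tilde c$. Then $b'x \le d$ is clearly valid for $P$ (add the two valid inequalities), and for $x \in P$ we have $b'x = d$ if and only if $a'x = c$ \emph{and} $\tilde a'x = \tilde c$, since each summand is bounded above by its respective constant, so equality in the sum forces equality in each term. Hence $\{x \in P : b'x = d\} = F \cap \tilde F$, which is therefore a face. (The edge cases where $F$, $\tilde F$, or their intersection is empty or all of $P$ are covered automatically, since $\emptyset$ and $P$ are faces.) This part is routine and I expect no obstacle.

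For part (2), the plan is a direct argument by contradiction, or equivalently a direct compactness argument. Suppose $L$ is an affine line with $L \subseteq \aff(P)$ and $L \cap \relint(P) \neq \emptyset$; pick $x_0 \in L \cap \relint(P)$. Parametrize $L$ as $\{x_0 + t v : t \in \R\}$ for some direction $v \neq 0$. Since $P$ is a bounded polytope, the set $\{t \in \R : x_0 + tv \in P\}$ is a nonempty, closed, bounded interval $[t_-, t_+]$ with $t_- < 0 < t_+$ (it contains $0$ in its interior because $x_0 \in \relint(P)$ and $v$ lies in the direction space of $\aff(P)$, so small perturbations $x_0 + tv$ stay in $P$). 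The endpoint $x_1 := x_0 + t_+ v$ lies in $P$ but is a boundary point of $P$ relative to $\aff(P)$: indeed, $x_0 + t v \notin P$ for $t > t_+$, so no relative neighborhood of $x_1$ in $\aff(P)$ is contained in $P$, hence $x_1 \notin \relint(P)$. By the characterization of the relative interior stated in the excerpt (a point of $P$ lies in $\relint(P)$ iff it is not contained in a proper face), $x_1$ must lie in some proper face $F$ of $P$. Since $x_1 \in L$, this face $F$ is a proper face intersected by $L$, as required.

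\textbf{Main obstacle.} The only delicate point is justifying that $0$ is an \emph{interior} point of the parameter interval $[t_-,t_+]$, i.e.\ that $x_0 \in \relint(P)$ together with $v \in \vec\aff(P)$ implies $x_0 + tv \in P$ for all small $|t|$. This is exactly the standard fact that the relative interior of a convex set is relatively open in its affine hull; since $L \subseteq \aff(P)$ guarantees $v$ is parallel to $\aff(P)$, the segment through $x_0$ in direction $v$ stays in $P$ for a while. One should also confirm the interval is bounded and closed, which follows from $P$ being compact (a polytope) and $L$ being a closed set, so $L \cap P$ is compact and nonempty, and its preimage under the affine parametrization $t \mapsto x_0 + tv$ is a compact interval. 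Everything else is bookkeeping with the definitions already recalled in Appendix~\ref{app:polytopes}.
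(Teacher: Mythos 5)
The paper states this lemma without proof, citing \cite{Gruber07} and \cite{Ziegler12}, and your argument is precisely the standard textbook proof: summing the two valid inequalities for part (1), and locating an endpoint of the compact interval $L\cap P$ on the relative boundary for part (2); both steps are correct. The one ingredient you rightly flag as needing justification --- that the paper's face-based definition of $\relint(P)$ coincides with relative openness in $\aff(P)$, which you use both to get $t_-<0<t_+$ and to place the endpoint in a proper face --- is indeed the standard fact for polytopes (immediate from a facet description $P=\{x\in\aff(P): a_i'x\le c_i\}$), so no gap remains.
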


Lemma~\ref{lem:subdiff-faces} characterizes the connection between a
certain class of convex functions (which encompasses polyhedral
gauges) and the faces of a related polytope. The lemma is needed to
prove Theorem~\ref{thm:unique-gauge}.


\begin{lemma} \label{lem:subdiff-faces}
Let $v_1,\dots,v_k\in \R^p$, $P$ be the polytope
$P=\conv\{v_1,\dots,v_k\}$ and $\phi$ be the convex function defined
by
$$
\phi(x) = \max\{v_1'x,\dots,v_k'x\} \text{ \rm for } x \in \R^p.
$$
Then the subdifferential of $\phi$ at $x$ is a face of $P$ and is given by 
$$
\dphi(x) = \conv\{v_l : l \in I_\phi(x)\}  = \{s \in P: s'x= \phi(x)\},\text{ \rm where } 
I_\phi(x) = \{l \in [k] : v_l'x = \phi(x)\}.
$$ 
Conversely, let $F$ be a non-empty face of $P$. Then $F = \dphi(x)$
for some $x \in \R^p$.
\end{lemma}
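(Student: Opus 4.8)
The plan is to establish the three claims in turn: (a) the explicit description $\dphi(x) = \conv\{v_l : l \in I_\phi(x)\}$; (b) that this set equals $\{s \in P : s'x = \phi(x)\}$ and is a face of $P$; (c) the converse, that every non-empty face arises this way. For (a), I would work directly from the definition of the subdifferential. First, a supporting-hyperplane / convexity argument shows each $v_l$ with $l \in I_\phi(x)$ is a subgradient: for such $l$ and any $b \in \R^p$, $\phi(b) \geq v_l'b = v_l'x + v_l'(b-x) = \phi(x) + v_l'(b-x)$, so $v_l \in \dphi(x)$; since $\dphi(x)$ is convex, $\conv\{v_l : l \in I_\phi(x)\} \subseteq \dphi(x)$. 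For the reverse inclusion, take $s \in \dphi(x)$. Writing $s = \sum_{l=1}^k \mu_l v_l$ with $\mu_l \geq 0$, $\sum \mu_l = 1$ is not automatic — $s$ need not lie in $P$ a priori — so instead I would argue: the directional derivative $\phi'(x;d) = \max\{v_l'd : l \in I_\phi(x)\}$ (standard for a max of linear functions), and $s \in \dphi(x)$ iff $s'd \leq \phi'(x;d)$ for all $d$, i.e. $s'd \leq \max_{l \in I_\phi(x)} v_l'd$ for all $d$. By a separating-hyperplane argument this is exactly the condition $s \in \conv\{v_l : l \in I_\phi(x)\}$ (if $s$ were outside this compact convex set, a separating direction $d$ would violate the inequality). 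This closes (a).

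For (b), the inclusion $\conv\{v_l : l \in I_\phi(x)\} \subseteq \{s \in P : s'x = \phi(x)\}$ is immediate since each such $v_l$ satisfies $v_l'x = \phi(x)$ and affine combinations preserve this. Conversely if $s \in P$ with $s'x = \phi(x)$, write $s = \sum_l \mu_l v_l$ with $\mu_l \geq 0$, $\sum \mu_l = 1$; then $\phi(x) = s'x = \sum_l \mu_l (v_l'x) \leq \sum_l \mu_l \phi(x) = \phi(x)$, with equality forcing $\mu_l = 0$ whenever $v_l'x < \phi(x)$, i.e. $s \in \conv\{v_l : l \in I_\phi(x)\}$. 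Finally, $\{s \in P : s'x = \phi(x)\}$ is a face of $P$ because $x'\cdot \leq \phi(x)$ is a valid inequality for $P$ (every vertex $v_l$ satisfies $v_l'x \leq \phi(x)$, hence so does every point of $P$) and the face is its equality set — this matches the definition of a face verbatim with $a = x$, $c = \phi(x)$.

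For the converse (c), let $F$ be a non-empty face of $P$, so $F = \{s \in P : a's = c\}$ for some valid inequality $a's \leq c$ on $P$. I claim $F = \dphi(a)$. Indeed $\phi(a) = \max_l v_l'a = \max_{s \in P} s'a = c$ (the max of a linear function over a polytope is attained at a vertex, and equals $c$ because $a's \leq c$ is valid and tight on the non-empty $F$). Then by the characterization just proved, $\dphi(a) = \{s \in P : s'a = \phi(a)\} = \{s \in P : s'a = c\} = F$.

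The main obstacle I anticipate is the reverse inclusion in step (a): showing that an arbitrary subgradient $s$ lies in $\conv\{v_l : l \in I_\phi(x)\}$ rather than merely in some larger set. The clean route is via the directional derivative formula $\phi'(x;d) = \max_{l \in I_\phi(x)} v_l'd$ together with the fact that the support function of a compact convex set determines it; one must be slightly careful that the support function of $\conv\{v_l : l \in I_\phi(x)\}$ in direction $d$ is precisely $\max_{l \in I_\phi(x)} v_l'd$, which is elementary but should be stated. Everything else is routine manipulation of convex combinations and valid inequalities. Alternatively, one can cite the standard formula for the subdifferential of a pointwise maximum of affine functions (e.g.\ \citealp[p.\,47]{HiriartLemarechal01}) and then only steps (b) and (c) remain, which are short.
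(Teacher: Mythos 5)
Your proposal is correct and follows essentially the same route as the paper: parts (b) and (c) match the paper's argument almost step for step (the paper proves the equality $\conv\{v_l : l \in I_\phi(x)\} = \{s\in P : s'x = \phi(x)\}$ by the same convex-combination computation, and establishes the converse by showing $c \le \phi(a)$ and $\phi(a)\le c$, which is just your observation that $\phi(a)=\max_{s\in P}s'a=c$ unpacked). For part (a) the paper simply cites the standard formula $\dphi(x)=\conv\{v_l : l\in I_\phi(x)\}$ from Hiriart-Urruty and Lemar\'echal, which is exactly the alternative you mention at the end; your self-contained derivation via the directional derivative and the support-function characterization of a compact convex set is a valid, slightly longer substitute for that citation.
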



\begin{proof} The fact that $\dphi(x) = \conv\{u_l : l \in
I_\phi(x)\}$ can be found in \citep[][p. 183]{HiriartLemarechal01}. To
prove the second equality, we consider the following. If $l \in
I_\phi(x)$, by definition of $I_\phi(x)$, $v_l'x = \phi(x)$ and thus
$v_l \in \{s \in P: s'x = \phi(x)\}$. Since $\{s \in P: s'x =
\phi(x)\}$ is a convex set, one may deduce that
$$
\conv\{v_l : l \in I_\phi(x)\} \subseteq \{s \in P : s'x = \phi(x)\}.
$$
Conversely, assume $s \in P$ is such that $s \notin \conv\{v_l : l
\in I_\phi(x)\}$. We then have $s = \sum_{l=1}^k \alpha_l v_l$ where
$\alpha_1,\dots,\alpha_k \geq 0$, $\sum_{l=1}^k \alpha_l = 1$ and
$\alpha_{l_0} > 0$ for some $l_0 \notin I_\phi(x)$. Since $v_l'x \leq
\phi(x)$ for all $l \in [k]$ and $u_{l_0}'x < \phi(x)$, we also get
$$
s'x = \sum_{l=1}^k \alpha_l v_l'x < \phi(x).
$$
Consequently, $s \notin \{s \in P : s'x = \phi(x)\}$ and thus
$$
\{s \in P : s'x = \phi(x)\} \subseteq \conv\{v_l : l \in I_\phi(x)\}.
$$
Therefore, $\dphi(x) = \conv\{v_l : l \in I_\phi(x)\}  = \{s \in P: s'x = \phi(x)\}$.

Now we show that the subdifferentials of $\phi$ are the (non-empty)
faces of $P$. Let $x \in \R^p$. By definition of $\phi$, $v_l'x \leq
\phi(x)$ for every $l \in [k]$ so that the inequality $x's \leq
\phi(x)$ is valid for all $s \in P$. This implies that $\dphi(x)$ is a
non-empty face of $P$. Conversely, let $F = \{s \in P : a's = c\}$ be
a non-empty face of $P$ where $a \in \R^p$, $c \in \R$ and $a's \leq
c$ is a valid inequality for all $s \in P$. We prove that $F =
\dphi(a)$. For this, take any $s \in F$. We get $a's = c$ as well as
$a's \leq \phi(a)$ as shown above, implying that $c \leq \phi(a)$.
Analogously, for any $s \in \dphi(a)$, $a's = \phi(a)$ as well as $a's
\leq c$ since $\dphi(a) \subseteq P$, yielding $\phi(a) \leq c$.
Therefore we may deduce that $\phi(a) = c$ and thus $F = \dphi(a)$.
\end{proof}

\section{Appendix -- Proofs} \label{app:proofs}

We use the following additional notation for the remainder of the
appendix. Given a matrix $A$, $A^+$ stands for the \emph{Moore-Penrose
inverse of $A$}. For a vector $v$, $v^\perp$ represents
$\lin(\{v\})^\perp$, the hyperplane orthogonal to $v$. We denote the
\emph{convex cone} or \emph{positive hull generated by
$v_1,\dots,v_k$} with $\cone(v_1,\dots,v_k)$. The cardinality of an
index set $I$ is denoted by $|I|$.

\subsection{Proofs for Section~\ref{sec:patterns}} 

\subsubsection*{Proof of Theorem~\ref{thm:patt-cones}}

We now prove Theorem~\ref{thm:patt-cones}, the first part stating that
the equivalence classes $C_\beta$ with respect to $\pen$ coincide with
the relative interior of the normal cones of the faces of $B^*$. Note
that, since $B^*$ is a polytope, the normal cones of $B^*$ are the
same at all points in the relative interior of a particular face of
$B^*$, \citep[see e.g.][p.16]{Ewald96}. Since $\dpen(\beta)$ is a face
of $B^*$ by Lemma~\ref{lem:subdiff-faces} in
Appendix~\ref{app:polytopes}, this means that $N_{B^*}(w) =
N_{B^*}(\tilde w)$ for all $w,\tilde w \in \relint(\dpen(\beta))$. For
simplicity, we write $N_{B^*}(\dpen(\beta))$ for the normal cone of
$B^*$ at (any) $b \in \relint(\dpen(\beta))$, so that
Theorem~\ref{thm:patt-cones} states that
$$
C_\beta = \relint(N_{B^*}(\dpen(\beta))).
$$ 

For the second part of Theorem~\ref{thm:patt-cones}, namely
$\lin(C_\beta) = \vec\aff(\dpen(\beta))^\perp$, we will need the
following lemma.

\begin{lemma} \label{lem:normal_cone}
Let $P$ be the polyhedron $\{w \in \R^p: s_1'w \leq r_1,\dots,s_m'w \leq
r_m\}$, $\bar w \in P$ and $\bar I =\{l \in [m]: s_l'\bar w = r_l\}$. We then have
$$
\lin(N_P(\bar w)) = \vec\aff(F)^\perp,
$$
where $F$ is the smallest face of $P$ containing $\bar w$, i.e., $F = \{w \in
P: s_l'w = r_l \quad \forall l \in \bar I\}$.
\end{lemma}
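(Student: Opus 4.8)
The plan is to compute both sides explicitly in terms of the active constraints at $\bar b$ and observe that they coincide. First I would recall the standard description of the normal cone of a polyhedron at a point: if $P = \{b : s_l'b \le r_l,\ l \in [m]\}$ and $\bar b \in P$ with active set $\bar I = \{l : s_l'\bar b = r_l\}$, then
$$
N_P(\bar b) = \cone(\{s_l : l \in \bar I\}) = \Big\{\textstyle\sum_{l \in \bar I}\mu_l s_l : \mu_l \ge 0\Big\}.
$$
This is a basic fact (e.g.\ \citep[p.65]{HiriartLemarechal01}); I would cite it rather than reprove it. Taking linear spans kills the sign constraints, so $\lin(N_P(\bar b)) = \lin(\{s_l : l \in \bar I\})$.

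Next I would identify the right-hand side. By hypothesis the smallest face $F$ containing $\bar b$ is $F = \{b \in P : s_l'b = r_l\ \forall l \in \bar I\}$; for $l \notin \bar I$ the inequality $s_l' b \le r_l$ is strict at $\bar b$ and hence at a relative neighborhood, so indeed $\aff(F) = \{b : s_l'b = r_l\ \forall l \in \bar I\}$ (one checks that $\bar b$ lies in the relative interior of $F$, so $F$ spans this affine subspace). Consequently the parallel vector space is
$$
\vec\aff(F) = \{u - v : u,v \in \aff(F)\} = \{w : s_l'w = 0\ \forall l \in \bar I\} = \lin(\{s_l : l \in \bar I\})^\perp.
$$
Taking orthogonal complements gives $\vec\aff(F)^\perp = \lin(\{s_l : l \in \bar I\})$, which is exactly $\lin(N_P(\bar b))$ computed in the previous step. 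This completes the proof.

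The only real care needed is the claim that $\aff(F)$ is cut out by exactly the equalities indexed by $\bar I$ — equivalently, that $\bar b \in \relint(F)$ and that $F$ is full-dimensional inside $\{b : s_l'b = r_l\ \forall l \in \bar I\}$. This follows because the constraints $s_l'b \le r_l$ with $l \notin \bar I$ are slack at $\bar b$, so a small ball around $\bar b$ intersected with $\{s_l'b = r_l\ \forall l \in \bar I\}$ lies in $P$, hence in $F$; thus $F$ contains a relatively open neighborhood of $\bar b$ in that affine subspace, forcing $\aff(F)$ to equal it. I expect this verification to be the main (and only mildly delicate) obstacle; everything else is bookkeeping with the two standard descriptions of normal cone and affine hull.
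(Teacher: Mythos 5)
Your proposal is correct and follows essentially the same route as the paper's proof: both invoke the standard description $N_P(\bar b) = \cone(\{s_l : l \in \bar I\})$ so that $\lin(N_P(\bar b)) = \lin(\{s_l\}_{l \in \bar I})$, and both establish $\vec\aff(F) = \lin(\{s_l\}_{l \in \bar I})^\perp$ by using the slackness of the inactive constraints to show that a small perturbation of $\bar b$ within $\{b : s_l'b = r_l,\ l \in \bar I\}$ stays in $F$. The only cosmetic difference is the reference cited for the normal cone formula.
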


\begin{proof}
According to \citep[Proposition~14.1, p.\ 250]{Gruber07}, we have
$N_P(\bar w) = \cone(\{s_l\}_{l \in \bar I})$ and therefore
$\lin(N_P(\bar w)) = \lin(\{s_l\}_{l \in \bar I})$. Clearly,
$\vec\aff(F) \subseteq \lin(\{s_l\}_{l \in \bar I})^\perp$ and
conversely, if $h \in \lin(\{s_l\}_{l \in \bar I})^\perp$ then, for
$\eta > 0$ small enough, $s_l'(\bar w + \eta h) = r_l$ for all $l \in
\bar I$ and $s_l'(\bar w + \eta h) < r_l$ for all $l \notin \bar I$.
Therefore $\bar w + \eta h \in F$ and thus $h \in \vec\aff(F)$ and
consequently $\lin(N_P(\bar w)) = \lin(\{s_l\}_{l \in \bar I}) =
\vec\aff(F)^\perp$.
\end{proof}

\begin{proof}[Proof of Theorem~\ref{thm:patt-cones}:  $C_\beta = \relint(N_{B^*}(\dpen(\beta)))$] 
We split the proof  into four steps.

\medskip

1) We first show that $C_\beta \subseteq N_{B^*}(\dpen(\beta))$. For
this, take $b \in C_\beta$ and $v \in \relint(\dpen(b))$. Since for
any $z \in B^*$ we have
$$
b'(z - v) = \underbrace{b'z}_{\leq \pen(b)} - \underbrace{b'z}_{ = \pen(b)} 
\leq \pen(b) - \pen(b) = 0,
$$
we may conclude that $b \in N_{B^*}(v) = N_{B^*}(\dpen(b)) =
N_{B^*}(\dpen(\beta))$.

\smallskip

2) Next, we show that $\lin(N_{B^*}(\dpen(\beta))) \subseteq
\vec\aff(\dpen(\beta))^\perp$: take $s \in N_{B^*}(\dpen(\beta))$ and
note that this implies
$$
s'(z - v) \leq 0 \;\; \forall z \in B^*, \forall v \in \relint(\dpen(\beta)).
$$
Since $\dpen(\beta) \subseteq B^*$, we can conclude for any $v,w \in
\relint(\dpen(\beta))$ that both
$$
s'(w - v) \leq 0 \text{ \rm and } s'(v - w) \leq 0
$$
hold so that $s \perp v - w$ for any $v,w \in \relint(\dpen(\beta))$.
Consequently $s \perp v - w$ for any $v,w \in
\dpen(\beta)$\footnote{There exists sequences $(v_n)_{n\in \N}$  and
$(w_n)_{n\in \N}$ in  $\relint(\dpen(\beta))$ such that $\lim_{n \to
\infty} v_n = v$ and  $\lim_{n \to \infty} w_n = w$. Since
$s'(v_n - w_n) = 0$ one may deduce that $s'(v - w) = 0$.}. Therefore, the
following holds.
$$
\lin(N_{B^*}(\dpen(\beta))) \subseteq \lin\{v - w : v,w \in \relint(\dpen(\beta))\}^\perp 
= \vec{\aff}(\dpen(\beta))^\perp.
$$

\smallskip

3) By 1), we have $C_\beta \subseteq N_{B^*}(\dpen(\beta))$. We now
establish the stronger result $C_\beta \subseteq
\relint(N_{B^*}(\dpen(\beta)))$. For this, let $b \in C_\beta$. We
show that $B(b,\eps) \cap \aff(N_{B^*}(\dpen(\beta))) \subseteq
N_{B^*}(\dpen(\beta))$ for small enough $\eps > 0$, implying the
desired claim. Take any $s \in B(b,\eps) \cap
\aff(N_{B^*}(\dpen(\beta)))$. By Lemma~\ref{lem:subdiff-incl}, we know
that $s \in B(b,\eps)$ implies $\dpen(s) \subseteq \dpen(b) =
\dpen(\beta)$ for small enough $\eps > 0$. If $\dpen(s) \subsetneq
\dpen(\beta)$, pick $v \in \dpen(s)$ and $w \in \dpen(\beta) \setminus
\dpen(s)$. Since $v - w \in \vec\aff(\dpen(\beta))$ and $s \in
\aff(N_{B^*}(\dpen(\beta))) \subseteq \lin(N_{B^*}(\dpen(\beta)))$
then, by 2), we have $s \in \vec\aff(\dpen(\beta))^\perp$ and
therefore $s'(v - w) = 0$. Finally, since $s'v = \pen(s)$, we may
deduce that $s'w = \pen(s)$ and thus $w \in \dpen(s)$ which leads to a
contradiction. Consequently, $s \in C_\beta$ so that $B(b,\eps) \cap
\aff(N_{B^*}(\dpen(\beta))) \subseteq  C_\beta \subseteq
N_{B^*}(\dpen(\beta))$.

\smallskip

4) So far, we have shown that $C_\beta \subseteq
\relint(N_{B^*}(\dpen(\beta)))$. We now argue that equality holds. For
this, note that it is known that the relative interior of the normal cones
provide a partition of the underlying space \citep[see
e.g.][p.17]{Ewald96}, so that the sets $\relint(N_{B^*}(\dpen(\beta)))$
form a partition of $\R^p$. Since the sets $C_\beta$ also form a partition
one may deduce that $C_\beta = \relint(N_{B^*}(\dpen(\beta)))$.


\smallskip

We now show the second part $\lin(C_\beta) =
\vec\aff(\dpen(\beta))^\perp$. Because $C_\beta =
\relint(N_{B^*}(\dpen(\beta)))$ and linear subspaces are closed, one
may deduce that $N_{B^*}(\dpen(\beta))) \subseteq
\lin(\relint(N_{B^*}(\dpen(\beta)))$. Consequently,
$\lin(\relint(N_{B^*}(\dpen(\beta))) = \lin(N_{B^*}(\dpen(\beta)))$.
Let $s \in \relint(\dpen(\beta))$. Because $\lin(N_{B^*}(\dpen
(\beta))) = \lin(N_{B^*}(s))$ and since $\dpen(\beta)$ is the smallest
face of $B^*$ containing $s$, we may deduce by
Lemma~\ref{lem:normal_cone} that $\lin(N_{B^*}(s)) = \lin(C_\beta) =
\vec\aff(\dpen(\beta))^\perp$.
\end{proof}

\subsection{Proofs for Section~\ref{sec:recovery}}

\subsubsection*{Proof of Proposition~\ref{prop:acc}}

The following lemma can be seen as generalizing Proposition~4.1 from
\cite{Gilbert17} from the $\ell_1$-norm to all convex functions.

\begin{lemma} \label{lem:equiv-acc} Let $\beta \in \R^p$ and $\phi$ be
a convex function on $\R^p$. Then $\row(X)$ intersects $\dphi(\beta)$
if and only if, for any $b \in \R^p$, the following implication holds
\begin{equation} \label{eq:analytic}
X\beta = Xb \implies \phi(\beta) \leq \phi(b).
\end{equation}
\end{lemma}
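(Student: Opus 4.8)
The plan is to prove the two implications of Lemma~\ref{lem:equiv-acc} separately, using the standard duality between a convex function $\phi$ and its subdifferential at a point, namely that $s \in \dphi(\beta)$ if and only if $\phi(b) \geq \phi(\beta) + s'(b-\beta)$ for all $b \in \R^p$. The key observation is that membership of some $s$ in both $\row(X)$ and $\dphi(\beta)$ can be translated into an analytic statement about values of $\phi$ along the affine fiber $\{b : Xb = X\beta\}$, which is exactly the affine subspace $\beta + \ker(X)$, and that $\row(X) = \ker(X)^\perp$.

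First I would assume $\row(X) \cap \dphi(\beta) \neq \emptyset$ and pick $s$ in this intersection. Since $s \in \dphi(\beta)$, for every $b$ we have $\phi(b) \geq \phi(\beta) + s'(b - \beta)$. If moreover $Xb = X\beta$, then $b - \beta \in \ker(X)$, and since $s \in \row(X) = \ker(X)^\perp$ we get $s'(b-\beta) = 0$, whence $\phi(b) \geq \phi(\beta)$. This establishes implication \eqref{eq:analytic}.

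For the converse, I would assume the implication \eqref{eq:analytic} holds, i.e.\ $\beta$ minimizes $\phi$ on the affine subspace $\beta + \ker(X)$, and I want to produce $s \in \row(X) \cap \dphi(\beta)$. The natural route is a separation / Lagrange multiplier argument: consider the restriction of $\phi$ to $\beta + \ker(X)$; since $\beta$ is a minimizer of this restricted (still convex) problem, $0$ belongs to the subdifferential of the restricted function at $\beta$, and by the standard calculus of subdifferentials for the sum of $\phi$ with the indicator of an affine subspace (whose subdifferential/normal cone at any point is $\ker(X)^\perp = \row(X)$), there exists $s \in \dphi(\beta)$ with $-s \in \row(X)$, hence $s \in \row(X)$. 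Alternatively, and perhaps more self-containedly, one can argue directly: the set $\{(Xb - X\beta,\, t) : t \geq \phi(b) - \phi(\beta)\} \subseteq \R^n \times \R$ is convex and, by \eqref{eq:analytic}, does not contain any point of the form $(0, t)$ with $t < 0$; separating this convex set from the ray $\{0\} \times (-\infty, 0)$ by a hyperplane yields a functional that, after normalization, produces the desired subgradient lying in $\row(X)$. I would need to be slightly careful to rule out a degenerate (vertical) separating hyperplane, but since $\phi$ is real-valued (finite everywhere) this is automatic.

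The main obstacle is the converse direction: producing the subgradient in $\row(X)$ requires an application of a subdifferential sum rule or a separation theorem, and one must verify the qualification condition (here trivial, since $\phi$ is finite everywhere and the affine subspace $\beta + \ker(X)$ obviously meets $\mathrm{dom}(\phi) = \R^p$) and handle the potential vertical-hyperplane degeneracy. The forward direction is essentially immediate once the identity $\row(X) = \ker(X)^\perp$ is invoked. I expect the cleanest write-up to invoke the subdifferential of a sum (Moreau--Rockafellar) with the indicator function of the affine subspace $\{b : Xb = X\beta\}$, noting that its normal cone at $\beta$ is precisely $\row(X)$, and then translate $0 \in \partial\big(\phi + \iota_{\{Xb = X\beta\}}\big)(\beta)$ into the claimed intersection being nonempty.
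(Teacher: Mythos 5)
Your proposal is correct and follows essentially the same route as the paper: the paper's proof also introduces the indicator function $\iota_\beta$ of the affine fiber $\{b : Xb = X\beta\}$, notes $\partial_{\iota_\beta}(\beta) = \row(X)$, and converts $0 \in \dphi(\beta) + \row(X)$ into $\row(X) \cap \dphi(\beta) \neq \emptyset$ via the Moreau--Rockafellar sum rule (your explicit check of the qualification condition, automatic since $\phi$ is finite everywhere, is a detail the paper leaves implicit).
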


\begin{proof}
Consider the function $\iota_\beta : \R^p \to \{0,\infty\}$ given by
$$
\iota_\beta(b) = \begin{cases} 0 & \text{ \rm when } Xb = X\beta \\
\infty & \text{\rm else.} \end{cases}
$$
Then \eqref{eq:analytic} holds for any $b \in \R^p$ if and only if
$\beta$ is a minimizer of the function $b \mapsto \phi(b) +
\iota_\beta(b)$. It is straightforward to show that
$\partial_{\iota_\beta}(\beta) = \row(X)$. We can therefore deduce
that the implication \eqref{eq:analytic} holds for any $b \in \R^p$ if
and only if
$$
0 \in \row(X) + \dphi(\beta) \iff \row(X) \cap \dphi(\beta) \neq
\emptyset.
$$
\end{proof}

\begin{proof}[Proof of Proposition~\ref{prop:acc}] 
By Lemma~\ref{lem:equiv-acc}, the geometric characterization of
accessible patterns is equivalent to the analytic one. We show the
geometric characterization.

($\implies$) When the pattern  of $\beta$ is
accessible with respect to $X$ and $\lambda \pen$, there exists $y \in
\R^n$ and $\hat\beta \in \SpenL(y)$ such that $\hat\beta\overset{\pen}{\sim}
\beta$. Because $\hat\beta$ is a minimizer,
$\frac{1}{\lambda}X'(y - X\hat\beta) \in \dpen(\hat\beta) =
\dpen(\beta)$, so that, clearly, $\row(X)$ intersects $\dpen(\beta)$.

\medskip

($\impliedby$) If $\row(X)$ intersects the face $\dpen(\beta)$, then there
exists $z \in \R^n$ such that $X'z \in \dpen(\beta)$. For $y = X\beta
+ \lambda z$,
we have $\frac{1}{\lambda}X'(y - X\beta)=
X'z$, so that
$\beta \in \SpenL(y)$, and the pattern of $\beta$ is accessible with respect to $X$ and $\lambda\pen$.
\end{proof}

\subsubsection*{Proof of Proposition~\ref{prop:attainability}}

\begin{proof}
Assume that the pattern of $\beta$ is accessible. Using
\citet[Proposition~5.2,(35)]{Gilbert17}\footnote{To make the
connection to the constrained problem treated in this reference, set
$A=X$ and $b=X\hat\beta(y)$.}, we may conclude that there exists $z
\in \R^n$ such that $X'z \in \relint(\dpen(\beta))$. We set $y =
\lambda z + X\beta$ and note that
$$
\frac{1}{\lambda}X'(y - X\beta) = X'z \in \relint(\dpen(\beta)),
$$
so that $y \in A_\beta$. We now show that for small, but otherwise
arbitrary $\eps \in \R^n$, $y + \eps$ still lies in $A_\beta$. For
this, we decompose $\R^n$ into
$$
\R^n = \col(XU_\beta) \oplus \col(XU_\beta)^\perp = \col(XU_\beta) \oplus \ker(U_\beta'X'),
$$
where $U_\beta \in \R^{p \times m}$ contains a basis of
$\lin(C_\beta)$ as columns. (Note that $m$ is the complexity of the
pattern $\beta$.) We accordingly decompose $\eps = \tilde\eps +
\check\eps$, where $\tilde\eps \in \col(XU_\beta)$ and $\check\eps \in
\ker(U_\beta'X')$ which satisfy $\|\check \eps\|_2\le \|\eps\|_2$ and
$\|\tilde \eps\|_2\le \|\eps\|_2$. By construction, we have
$\tilde\eps = XU_\beta(XU_\beta)^+ \tilde \eps$. We set $\tilde\beta =
\beta + U_\beta (XU_\beta)^+ \tilde \eps$. Note that $\beta \in
C_\beta$ and $U_\beta (XU_\beta)^+ \tilde \eps \in \lin(C_\beta)$. By
Theorem~\ref{thm:patt-cones}, $C_\beta$ is relatively open. Moreover,
we have $\lin(C_\beta) = \aff(C_\beta) =\vec\aff(C_\beta)$, which
holds since $0$ lies in the relative boundary of $C_\beta$ by
Theorem~\ref{thm:patt-cones} and $\aff(C_\beta)$ is closed, so that $0
\in \aff(C_\beta)$. Therefore, there exists $r_0>0$ such that
$\|\eps\|_2\le r_0$ implies $\tilde\beta \in C_\beta$. Moreover,
$$
\frac{1}{\lambda}X'(y + \eps - X\tilde\beta) = 
\frac{1}{\lambda}X'\left(y + X U_\beta (XU_\beta)^+ \tilde \eps + \check\eps - X(\beta +  U_\beta (XU_\beta)^+ \tilde \eps)\right) = 
X'z + \frac{1}{\lambda}X'\check\eps.
$$
Since $X'z \in \relint(\dpen(\beta))$ and $X'\check\eps/\lambda \in
\col(U_\beta)^\perp = \lin(C_\beta)^\perp = \vec\aff(\dpen(\beta))$, by
Theorem~\ref{thm:patt-cones}, there exists $r_1>0$ such that  
$\|\eps\|_2\le r_1$ implies  $X'(y + \eps -
X\tilde\beta)/\lambda\in \dpen(\beta)$. Finally, when $\|\eps\|_2\le \min\{r_0,r_1\}$ then  $ \dpen(\tilde\beta)=\dpen(\beta)$ 
proving that $\SpenL(y + \eps) = \{\tilde\beta\}$, where $\tilde\beta
\penequiv \beta$.
\end{proof}

\subsubsection*{Proof of Theorem~\ref{thm:irrep-cond}}

\begin{lemma} \label{lem:subdiff-convex}
Let $\phi : \R^p \to \R$ be the polyhedral gauge defined as 
$$
\phi(x) = \max\{u_1'x,\dots,u_k'x\} \text{ \rm for
some } u_1,\dots,u_k \in \R^p 
$$ 
If $\dphi(x) = \dphi(v)$, we have $\dphi(x) = \dphi(\alpha x + (1 -
\alpha)v) = \dphi(v)$ for all $\alpha \in [0,1]$.
\end{lemma}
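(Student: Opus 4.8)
The plan is to use the explicit description of the subdifferential of a maximum of linear functionals given by Lemma~\ref{lem:subdiff-faces}. Write $P = \conv\{u_1,\dots,u_k\}$ and, for $w \in \R^p$, let $I_\phi(w) = \{l \in [k] : u_l'w = \phi(w)\}$; that lemma gives $\dphi(w) = \conv\{u_l : l \in I_\phi(w)\} = \{s \in P : s'w = \phi(w)\}$. The cases $\alpha = 0$ and $\alpha = 1$ are trivial, so I fix $\alpha \in (0,1)$, set $v_\alpha = \alpha x + (1-\alpha)v$, and show $\dphi(x) = \dphi(v) \subseteq \dphi(v_\alpha)$ together with $\dphi(v_\alpha) \subseteq \dphi(x)$; combined with the hypothesis $\dphi(x) = \dphi(v)$ this forces $\dphi(v_\alpha) = \dphi(x) = \dphi(v)$.

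For the inclusion $\dphi(x) = \dphi(v) \subseteq \dphi(v_\alpha)$, I would take $s \in \dphi(x) = \dphi(v)$. Since $s \in P$ it is a convex combination of the $u_l$, hence $s'w \leq \phi(w)$ for every $w$; in particular $s'v_\alpha \leq \phi(v_\alpha)$. On the other hand, convexity of $\phi$ yields $\phi(v_\alpha) \leq \alpha\phi(x) + (1-\alpha)\phi(v)$, while $s'x = \phi(x)$ and $s'v = \phi(v)$ (from Lemma~\ref{lem:subdiff-faces}) give $s'v_\alpha = \alpha\phi(x) + (1-\alpha)\phi(v)$. Chaining these, $s'v_\alpha \leq \phi(v_\alpha) \leq \alpha\phi(x) + (1-\alpha)\phi(v) = s'v_\alpha$, so every inequality is an equality; in particular $s'v_\alpha = \phi(v_\alpha)$ with $s \in P$, i.e.\ $s \in \dphi(v_\alpha)$. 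A byproduct of this step is the identity $\phi(v_\alpha) = \alpha\phi(x) + (1-\alpha)\phi(v)$, which I will reuse.

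For the reverse inclusion $\dphi(v_\alpha) \subseteq \dphi(x)$ I would argue at the level of active indices. Let $l \in I_\phi(v_\alpha)$, so $u_l'v_\alpha = \phi(v_\alpha) = \alpha\phi(x) + (1-\alpha)\phi(v)$ by the byproduct identity. Since $u_l'x \leq \phi(x)$, $u_l'v \leq \phi(v)$, and $\alpha(u_l'x) + (1-\alpha)(u_l'v) = \alpha\phi(x) + (1-\alpha)\phi(v)$ with $\alpha \in (0,1)$, termwise equality is forced, so $u_l'x = \phi(x)$, i.e.\ $l \in I_\phi(x)$ (and likewise $l \in I_\phi(v)$). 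Hence $\{u_l : l \in I_\phi(v_\alpha)\} \subseteq \{u_l : l \in I_\phi(x)\}$, and taking convex hulls gives $\dphi(v_\alpha) = \conv\{u_l : l \in I_\phi(v_\alpha)\} \subseteq \conv\{u_l : l \in I_\phi(x)\} = \dphi(x)$.

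There is no real obstacle here; the argument is elementary once Lemma~\ref{lem:subdiff-faces} is available. The one point worth being careful about is that the hypothesis $\dphi(x) = \dphi(v)$ does \emph{not} by itself force $I_\phi(x) = I_\phi(v)$ (distinct index subsets may have the same convex hull), so one should avoid reasoning directly with those index sets; the step above sidesteps this by only using that a common subgradient $s$ satisfies $s'x = \phi(x)$ and $s'v = \phi(v)$, which is precisely the content of Lemma~\ref{lem:subdiff-faces} evaluated at $x$ and at $v$. Everything else reduces to the observation that equality in a convex combination of several ``$\leq$'' relations forces each of those relations to be an equality.
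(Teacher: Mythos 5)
Your proof is correct and takes essentially the same route as the paper's: both derive the key identity $\phi(\alpha x+(1-\alpha)v)=\alpha\phi(x)+(1-\alpha)\phi(v)$ (you via the chain $s'v_\alpha\le\phi(v_\alpha)\le\alpha\phi(x)+(1-\alpha)\phi(v)=s'v_\alpha$, the paper via the two subgradient inequalities at $x$ and $v$) and then compare active generators using Lemma~\ref{lem:subdiff-faces}. Your closing caveat is actually unnecessary --- since $l\in I_\phi(w)$ if and only if $u_l\in\dphi(w)$ by the second characterization in that lemma, the hypothesis $\dphi(x)=\dphi(v)$ does force $I_\phi(x)=I_\phi(v)$ --- but this has no bearing on the validity of your argument.
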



\begin{proof}
Let $s \in \dphi(x) = \dphi(v)$. Since $s$ is a subgradient at $x$ and
at $v$, the following two inequalities hold
\begin{align*}
\phi(\alpha x + (1 - \alpha)v) &\geq \phi(x) - (1 - \alpha)s'(x-v) \\
\phi(\alpha x + (1 - \alpha)v) &\geq \phi(v) + \alpha s'(x-v).
\end{align*}
Multiplying the first inequality by $\alpha$, the second by
$(1-\alpha)$ and adding them, we get
$$
\phi(\alpha x + (1-\alpha)v) \geq \alpha\phi(x) + (1-\alpha)\phi(v).
$$
Using the convexity of $\phi$, we arrive at
$$
\phi(\alpha x + (1-\alpha)v) = \alpha\phi(x) + (1 - \alpha)\phi(v).
$$
By Lemma~\ref{lem:subdiff-faces} we have $\dphi(x) = \conv\{u_l : l \in I\}$, where $I_\phi(x) = \{l \in [k] :
u_l'x = \phi(x)\}$. Therefore, if
$u_l \in \dphi(x) = \dphi(v)$, then $u_l'x = \phi(x)$ and $u_l'v =
\phi(v)$, thus
$$
u_l'(\alpha x + (1 - \alpha)v) = \alpha\phi(x) + (1 - \alpha)\phi(v) = 
\phi(\alpha x + (1 - \alpha)v).
$$
Consequently, $u_l \in \dphi(\alpha x + (1 - \alpha)v)$. On the other
hand, if $u_l \notin \dphi(x)$, then $u_l'x < \phi(x)$ and $u_l'v <\phi(v)$, thus
$$
u_l'(\alpha x + (1 - \alpha)v) < \alpha\phi(x) + (1 - \alpha)\phi(v) = 
\phi(\alpha x + (1 - \alpha)v).
$$
Consequently, $u_l \notin \dphi(\alpha x + (1 - \alpha)v)$ and the
claim follows.
\end{proof}

 
\begin{lemma}\label{lem:Vbeta}
Let $X \in \R^{n \times p}$ and $\beta \in \R^p$. The following set is convex
$$
V_\beta = \{y\in \R^n: \exists \lambda > 0\, \exists \hat\beta \in
\SpenL(y) \text{ \rm such that } \hat \beta \overset{\pen}{\sim} \beta\}.
$$
\end{lemma}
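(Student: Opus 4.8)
The plan is to reduce membership in $V_\beta$ to a first-order optimality statement and then use the fact that the pattern class $C_\beta$ is convex. The starting point is the standard optimality characterization: for $\lambda>0$, one has $\hat\beta\in\SpenL(y)$ if and only if $X'(y-X\hat\beta)\in\lambda\,\dpen(\hat\beta)$, since the objective is the sum of a differentiable convex term and the convex term $\lambda\pen$. Hence $y\in V_\beta$ precisely when there exist $\lambda>0$ and $\hat\beta$ with $\dpen(\hat\beta)=\dpen(\beta)$ such that $\lambda^{-1}X'(y-X\hat\beta)\in\dpen(\beta)$.

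Then I would take $y_1,y_2\in V_\beta$ with witnesses $\lambda_1,\hat\beta_1$ and $\lambda_2,\hat\beta_2$, set $s_i:=\lambda_i^{-1}X'(y_i-X\hat\beta_i)\in\dpen(\beta)$ for $i=1,2$, fix $t\in[0,1]$, and define
\[
\lambda:=t\lambda_1+(1-t)\lambda_2>0,\qquad \hat\beta:=t\hat\beta_1+(1-t)\hat\beta_2.
\]
Since $\dpen(\hat\beta_1)=\dpen(\hat\beta_2)=\dpen(\beta)$, Lemma~\ref{lem:subdiff-convex} (with $x=\hat\beta_1$, $v=\hat\beta_2$, $\alpha=t$) gives $\dpen(\hat\beta)=\dpen(\beta)$; equivalently this is the convexity of $C_\beta$, which also follows from its identification with a relatively open normal cone of $B^*$ in Theorem~\ref{thm:patt-cones}. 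A direct computation then yields
\[
\lambda^{-1}X'\bigl(ty_1+(1-t)y_2-X\hat\beta\bigr)=\lambda^{-1}\bigl(t\lambda_1 s_1+(1-t)\lambda_2 s_2\bigr)=\tfrac{t\lambda_1}{\lambda}\,s_1+\tfrac{(1-t)\lambda_2}{\lambda}\,s_2,
\]
which is a convex combination of $s_1,s_2\in\dpen(\beta)$ and therefore lies in $\dpen(\beta)=\dpen(\hat\beta)$. By the optimality characterization this means $\hat\beta\in\SpenL(ty_1+(1-t)y_2)$ with $\hat\beta\penequiv\beta$, so $ty_1+(1-t)y_2\in V_\beta$, which is the desired convexity.

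I do not expect a genuine obstacle. The one step that must be set up correctly is the choice $\lambda=t\lambda_1+(1-t)\lambda_2$ for the combined tuning parameter: this is exactly what turns the residual subgradient into a \emph{convex} combination of $s_1$ and $s_2$, rather than merely a nonnegative combination, so that it remains inside the face $\dpen(\beta)$ of $B^*$. The only input borrowed from earlier in the paper is that the pattern is preserved along a segment joining two points with the same subdifferential --- Lemma~\ref{lem:subdiff-convex}, equivalently the convexity of $C_\beta$ coming from Theorem~\ref{thm:patt-cones}.
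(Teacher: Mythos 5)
Your proof is correct and follows essentially the same route as the paper's: both take the convex combination $\check\lambda=t\lambda_1+(1-t)\lambda_2$, $\check\beta=t\hat\beta_1+(1-t)\hat\beta_2$, verify the optimality condition by writing the residual subgradient as a convex combination of elements of the face $\dpen(\beta)$, and invoke Lemma~\ref{lem:subdiff-convex} to preserve the pattern along the segment. No changes needed.
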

\noindent
Note that $V_\beta$ may be empty.


\begin{proof}
Assume that $V_\beta \neq \emptyset$. Let $y$, $\tilde y \in V_\beta$.
Then there exist $\lambda > 0$ and $\tilde\lambda > 0$ such that
$\hat\beta \in \SpenL(y)$ and $\tilde\beta \in
S_{X,\tilde\lambda\pen}(\tilde y)$ with $\dpen(\hat\beta) =
\dpen(\tilde\beta) = \dpen(\beta)$. Consequently,
$$
X'(y - X\hat\beta) \in \lambda \dpen(\beta) \text{ \rm and }
X'(\tilde y - X\tilde\beta) \in \tilde\lambda\dpen(\beta).
$$
Let $\alpha \in (0,1)$ and $\check y = \alpha y + (1 - \alpha)\tilde
y$. Define $\check\lambda = \alpha\lambda + (1 - \alpha)\tilde\lambda$
and $\check\beta = \alpha\hat\beta + (1 - \alpha)\tilde\beta$.
We show that $\check y \in V_\beta$. Indeed, observe that
$$ 
X'\left(\check y - X\check\beta\right) = \alpha X'(y - X\hat\beta) +
(1-\alpha)X'(\tilde y-X\tilde\beta) \in \alpha\lambda\dpen(\beta) + 
(1 - \alpha)\tilde\lambda\dpen(\beta) = \check\lambda\dpen(\beta).
$$
By Lemma~\ref{lem:subdiff-convex}, $\dpen(\check\beta) =
\dpen(\alpha\hat\beta + (1-\alpha)\tilde\beta) = \dpen(\beta)$, so
that $\check\beta \in S_{X,\check\lambda\pen}(\check y)$ also, which
proves the claim.
\end{proof}


\begin{proof}[Proof of Theorem~\ref{thm:irrep-cond}]
Assume that the noiseless recovery condition does not hold for
$\beta$. Then $X\beta \notin V_\beta$, where $V_{\beta}$ is defined as
in Lemma~\ref{lem:Vbeta}. Consequently, by convexity of $V_\beta$, we
have $X\beta + \eps \notin V_\beta$ or $X\beta - \eps \notin V_\beta$
for any realization of $\eps \in \R^n$. Therefore
\begin{align*}
1 &= \P_\eps(\{X\beta + \eps \notin  V_\beta\} \cup \{X\beta - \eps \notin V_\beta\}) \\
&\leq \P_\eps(\{X\beta + \eps \notin V_\beta\}) + \P(\{X\beta - \eps \notin V_\beta\})
= 2\P_\eps(\{X\beta + \eps \notin  V_\beta\}).
\end{align*}
Consequently, 
$$
\frac{1}{2} \geq \P_\eps(\{X\beta + \eps \in V_\beta\}) = 
\P_\eps(\exists \lambda > 0 \; \exists \hat\beta \in \SpenL(Y) 
\text{ \rm such that } \hat \beta \overset{\pen}{\sim} \beta).
$$
\end{proof}

\subsubsection*{Proof of Proposition~\ref{prop:nrc-geom}}

\begin{proof}
($\implies$) Let $\hat\beta \in \SpenL(X\beta)$ with $ \hat\beta \penequiv \beta$. Then
$$
\frac{1}{\lambda}X'X(\beta - \hat\beta) \in \dpen(\beta). 
$$
Since $\hat\beta \in C_\beta$, we get $(\beta - \hat\beta)/\lambda \in
\lin(C_\beta)$ which yields the desired implication.

\smallskip

($\impliedby$) We assume that $X'X\lin(C_\beta) \cap \dpen(\beta) \neq
\emptyset$, i.e., there exists such $b \in \lin(C_\beta)$ such that
$X'Xb \in \dpen(\beta)$. We set $\hat\beta = \beta - \lambda b$.
By Theorem~\ref{thm:patt-cones}, $C_\beta$ is relatively open.
Note that $b \in \lin(C_\beta) = \aff(C_\beta) = \vec\aff(C_\beta)$,
which holds since $0$ lies in the relative boundary of $C_\beta$ by
Theorem~\ref{thm:patt-cones} and $\aff(C_\beta)$ is closed, so that $0
\in \aff(C_\beta)$. Therefore, for small enough $\lambda$, we have
$\hat\beta \in C_\beta$ and $\hat\beta \penequiv \beta$. Consequently,
$$
\frac{1}{\lambda}X'(X\beta - X\hat\beta) = X'Xb \in \dpen(\hat\beta),
$$ 
so that $\hat\beta \in \SpenL(X\beta)$, which finishes the proof.
\end{proof}

\subsection{Proofs for Section~\ref{sec:recovery-thres}}

\subsubsection*{Proof of Theorem~\ref{thm:overfit}}

Lemmas~\ref{lem:K-compact} and \ref{lem:conv} are used to prove
Theorem~\ref{thm:overfit} which claims that, asymptotically,
 $\hat\beta^{(r)}$ (in the notation of Theorem~\ref{thm:overfit})
converges to $\beta$ when $r$ tends to $\infty$.

\begin{lemma} \label{lem:K-compact}
Let $\pen$ be a real-valued polyhedral gauge on $\R^p$, $X \in \R^{n
\times p}$, $v \in \col(X)$. Let $K_1 \geq 0$, $K_2 \geq 0$  be large
enough such that $K= \{b \in \R^p: \pen(b) \leq K_1, \|Xb - v\|_2 \leq
K_2\}$ is non-empty. If $\{b\in \R^p:Xb=0 \text{ \rm and }\pen(b)=0\} = \{0\}$, the set $K$
is compact.
\end{lemma}

\begin{proof} Clearly, $K$ is closed. Assume that $K$ is unbounded. Then 
there exists an unbounded sequence $(b_r)_{r \in \N} \subseteq
K$ with $\lim_{r\to \infty}\|b_r\|_2 = \infty$. Let $\tilde b_r = b_r/\|b_r\|_2$ and pick a convergent subsequence
$\tilde b_{r_l} \to \tilde b \neq 0$ as $l \to \infty$. We have
$\pen(\tilde b_{r_l}) = \pen(b_{r_l})/\|b_{r_l}\|_2 \leq
K_1/\|b_{r_l}\|_2 \to 0$, so that by continuity, we get $\pen(\tilde
b) = 0$. Similarly, since $(Xb_{r_l})_{l \in \N}$ is bounded, $X\tilde
b_{r_l} = Xb_{r_l}/\|b_{r_l}\|_2 \to 0$, implying that $X\tilde b =
0$. But then $\tilde b = 0$ must hold which yields a contradiction.
\end{proof}

\begin{lemma} \label{lem:conv}
Let $X \in \R^{n\times p}$ and $\pen$ be a real-valued polyhedral
gauge on $\R^p$. Let $\beta \in \R^p$ and set $y^{(r)} = X\beta +
\eps^{(r)}$ where $\eps^{(r)}$ is a sequence in $\R^n$ such that
$\lim_{r\to \infty} \eps^{(r)} = 0$. Assume that uniform uniqueness
holds for $X$ and $\pen$, so that $\hat\beta^{(r)}$ is the unique
element of $\SpenLr(y^{(r)})$. If $\beta$ is accessible with respect
to $X$ and $\pen$ and $\lim_{r \to \infty} \lambda^{(r)} = 0$, we have
$$
\lim_{r \to \infty} \hat\beta^{(r)} = \beta.
$$
\end{lemma}

\begin{proof}
Since $\hat\beta^{(r)}$ is a minimizer, we have 
$$
\frac{1}{2}\|y^{(r)} - X\hat\beta^{(r)}\|^2_2 + \lambda^{(r)}\pen(\hat\beta^{(r)})
\leq \frac{1}{2}\|y^{(r)} - X(\beta+X^ + \eps^{(r)})\|^2_2 +
\lambda^{(r)}\pen(\beta + X^+\eps^{(r)}). 
$$
Because $XX^+$ is the orthogonal projection onto $\col(X)$, we have
$$
\|y^{(r)} - X\hat\beta^{(r)}\|^2_2 \geq \|y^{(r)} - XX^+y^{(r)}\|^2_2 = 
\|y^{(r)} - X(\beta+X^+\eps^{(r)})\|^2_2.
$$ 
so that $\pen(\hat\beta^{(r)}) \leq \pen(\beta+X^+\eps^{(r)})$ and
\begin{equation} \label{eq:bounded} 
\limsup_{r \to \infty} \pen(\hat\beta^{(r)}) \leq \limsup_{r \to \infty} \pen(\beta+X^+\eps^{(r)}) 
= \pen(\beta),
\end{equation}
implying that the sequence $\left(\pen(\hat\beta^{(r)})\right)_{r \in
\N}$ is bounded. Moreover,
$$
\frac{1}{2}\|y^{(r)}  - X\hat\beta^{(r)}\|^2_2 \leq 
\frac{1}{2}\|y^{(r)} - X(\beta+X^+\eps^{(r)})\|^2_2 + \lambda^{(r)}\pen(\beta+X^+\eps^{(r)}). 
$$
The right-hand side of the above display converges to 0 as $r \to
\infty$, so that we can deduce that
$$
\lim_{r \to \infty}\|X\beta - X\hat\beta^{(r)}\|_2 = 0.
$$
Due to uniform uniqueness, we have $\{b\in \R^p:Xb=0 \text{ \rm and }
\pen(b)=0\} = \{0\}$ and thus, by Lemma~\ref{lem:K-compact}, the
sequence $(\hat\beta^{(r)})_{r \in \N}$ is bounded. Therefore, to
prove that $\lim_{r \to \infty} \hat\beta^{(r)} = \beta$, it suffices
to show that $\beta$ is the unique accumulation point of this
sequence. We extract a subsequence $(\hat\beta^{\phi(r)})_{r \in \N}$
converging to $\gamma\in \R^p$ (where $\phi: \N \to \N$ is an
increasing function). By \eqref{eq:bounded}, one may deduce that
$\pen(\gamma) \leq \pen(\beta)$. Moreover, we have that
$$
0 = \lim_{r \to \infty} 
\left\|X\left(\hat\beta^{\phi(r)} - \beta\right)\right\|^2_2
= \|X(\gamma-\beta)\|_2^2.
$$
Therefore, $\gamma$ satisfies
$$
X\gamma = X\beta \textrm{ and } \pen(\gamma) \leq \pen(\beta).
$$
Since the pattern of $\beta$ is accessible, this implies that
$\pen(\gamma) = \pen(\beta)$ by Proposition~\ref{prop:acc}. Using the
same proposition, we can also pick $X'z \in \dpen(\beta)$ and define
$y = X\beta + z$. Since for this choice of $y$, $X'(y-X\beta) = X'z
\in \dpen(\beta)$, we have $\beta \in \Spen(y)$. Since $X\gamma =
X\beta$ and $\pen(\gamma) = \pen(\beta)$, $\gamma \in \Spen(y)$ also.
Uniform uniqueness now implies that $\gamma = \beta$ must hold.
\end{proof}

\begin{lemma} \label{lem:subdiff-incl}
Let $\pen$ be a real-valued polyhedral gauge on $\R^p$ and let $\beta
\in \R^p$. Then there exists $\tau > 0$ such that
$$
\dpen(b) \subseteq \dpen(\beta) \textrm{ for all } b: \|b - \beta\|_\infty \leq \tau.
$$
\end{lemma}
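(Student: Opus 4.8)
The plan is to translate the desired inclusion of subdifferentials into an inclusion of \emph{active index sets} and then observe that active sets can only shrink under small perturbations. Writing $\pen(x) = \max\{u_1'x,\dots,u_k'x\}$ and setting $I_\pen(x) = \{l \in [k] : u_l'x = \pen(x)\}$, Lemma~\ref{lem:subdiff-faces} gives $\dpen(x) = \conv\{u_l : l \in I_\pen(x)\}$. Since $I \subseteq I'$ implies $\conv\{u_l : l \in I\} \subseteq \conv\{u_l : l \in I'\}$, it is enough to produce $\tau > 0$ with $I_\pen(b) \subseteq I_\pen(\beta)$ whenever $\|b - \beta\|_\infty \leq \tau$; the inclusion $\dpen(b) \subseteq \dpen(\beta)$ then follows at once.

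To find such a $\tau$, I would first dispose of the degenerate case $I_\pen(\beta) = [k]$ (which occurs, for instance, at $\beta = 0$): here $I_\pen(b) \subseteq [k] = I_\pen(\beta)$ for every $b$, so any $\tau > 0$ works. Otherwise, I would exploit the strict gap $\delta := \min_{l \notin I_\pen(\beta)}\big(\pen(\beta) - u_l'\beta\big) > 0$ together with the fact that every linear form $x \mapsto u_l'x$ and $\pen$ itself are Lipschitz in $\|\cdot\|_\infty$ with the common constant $L := \max_{l \in [k]} \|u_l\|_1$ (via Hölder's inequality and $|\max_l a_l - \max_l c_l| \leq \max_l |a_l - c_l|$). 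Choosing $0 < \tau < \delta/(2L)$, for any $b$ with $\|b-\beta\|_\infty \leq \tau$ and any $l \notin I_\pen(\beta)$ one gets $u_l'b - \pen(b) \leq (u_l'\beta - \pen(\beta)) + 2L\tau \leq -\delta + 2L\tau < 0$, hence $l \notin I_\pen(b)$, which yields $I_\pen(b) \subseteq I_\pen(\beta)$ and thus the claim.

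There is no genuine obstacle in this argument; the only points demanding a little care are (i) producing a \emph{single} $\tau$ that simultaneously handles all of the finitely many inactive indices — solved by taking the minimal gap $\delta$ and the maximal Lipschitz constant $L$ — and (ii) remembering to treat the case in which no inactive index exists, since then there is no gap to exploit and the inclusion must be argued trivially.
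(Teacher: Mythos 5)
Your proposal is correct and follows essentially the same route as the paper's proof: both reduce the claim to the inclusion of active index sets $I_\pen(b) \subseteq I_\pen(\beta)$ via Lemma~\ref{lem:subdiff-faces} and then use the strict gap $u_l'\beta < \pen(\beta)$ for inactive $l$ together with continuity. The only difference is that you make the choice of $\tau$ quantitative through an explicit Lipschitz bound (and note the vacuous case $I_\pen(\beta)=[k]$), where the paper simply invokes continuity of the finitely many linear forms and of $\pen$.
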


\begin{proof}
Let $I = \{l \in [k]: u_l'\beta = \pen(\beta)\}$. By
Lemma~\ref{lem:subdiff-faces}, $\dpen(\beta) = \conv\{u_l\}_{l \in
I}$. Since
$$
u_l'\beta < \pen(\beta) \; \forall l \notin I, 
$$
and by continuity, we can pick $\tau > 0$ small enough such that
$$
u_l'b < \pen(b) \; \forall l \notin I, \; \forall b: \|b - \beta\|_\infty \leq \tau.
$$
Consequently, for any such $b$, we have $J = \{l \in [k]: u_l'b =
\pen(b)\} \subseteq I$ and thus
$$
\dpen(b) = \conv\{u_l\}_{l \in J} \subseteq \conv\{u_l\}_{l \in I} = \dpen(\beta).
$$
\end{proof}

Finally, the proof of Theorem~\ref{thm:overfit} is based on
Lemma~\ref{lem:conv} and on Lemma~\ref{lem:subdiff-incl} is given
below.

\begin{proof}[Proof of Theorem~\ref{thm:overfit}]
By Lemma~\ref{lem:subdiff-incl}, there exists $\tau > 0$ such that for
any $b$ with $\|b - \beta\|_\infty \leq \tau$, we have $\dpen(b)
\subseteq \dpen(\beta)$. By Lemma~\ref{lem:conv}, $\hat\beta^{(r)}$
converges to $\beta$ when $r$ tends to $\infty$. Consequently, we have
$$
\exists r_0 \in \N \text{ \rm such that } \forall r \geq r_0,  \dpen(\hat\beta^{(r)}) \subseteq \dpen(\beta).
$$
\end{proof}

\subsubsection*{Proof of Proposition~\ref{prop:acc-thres}}

\begin{proof} 
We only need to prove the implication ($\impliedby$), as the other
direction is obvious. Assume that $\dpen(\hat\beta) \subseteq
\dpen(\beta)$. Since $\hat\beta \in \SpenL(y)$, we have
$\frac{1}{\lambda}X'(y - X\hat\beta) \in \dpen(\hat\beta) \subseteq
\dpen(\beta)$. Consequently, $\row(X)$ intersects $\dpen(\beta)$ which
implies that the pattern of $\beta$ is accessible with respect to $X$
and $\pen$ by Proposition~\ref{prop:acc}. Consequently, there exists
$y \in \R^n$ and there exists $\hat\beta \in \SpenL(y)$ for which
$\hat \beta \overset{\pen}{\sim} \beta$.
\end{proof}

\subsection{Proofs for Section~\ref{sec:unique}}

\subsubsection*{Proof of Theorem~\ref{thm:unique-gauge}}

The following lemma -- needed to show Theorem~\ref{thm:unique-gauge}
-- states that the fitted values are unique over all non-unique
solutions of the penalized problem for a given $y$. It is a
generalization of Lemma~1 in \cite{Tibshirani13}, which shows this
fact for the special case of the LASSO.


\begin{lemma} \label{lem:fitted-values}
Let $X \in \R^{n \times p}$, $y \in \R^n$, $\lambda > 0$ and $\pen$ be
a polyhedral gauge. Then $X\hat\beta = X\tilde\beta$ and $\pen(\hat
\beta) = \pen(\tilde\beta)$  for all $\hat\beta, \tilde\beta \in
\Spen(y)$.
\end{lemma}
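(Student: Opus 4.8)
The plan is to exploit convexity of the objective $f(b) = \tfrac12\|y-Xb\|_2^2 + \lambda\pen(b)$ together with strict convexity of the squared-error term as a function of the fitted values $Xb$. Since $\pen$ is a real-valued gauge it is convex and finite everywhere, so $f$ is convex; by Proposition~\ref{prop:existence} the set $\Spen(y)$ is non-empty and it is convex as the minimizer set of a convex function. Write $f^\star$ for the minimal value.

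First I would establish uniqueness of the fitted values. Take $\hat\beta,\tilde\beta \in \Spen(y)$ and suppose, for contradiction, that $X\hat\beta \neq X\tilde\beta$. The map $v \mapsto \tfrac12\|y-v\|_2^2$ on $\R^n$ is strictly convex, so on the non-degenerate segment joining $X\hat\beta$ and $X\tilde\beta$ it lies strictly below its chord; evaluating at the midpoint gives
$$\tfrac12\bigl\|y - X\tfrac{\hat\beta+\tilde\beta}{2}\bigr\|_2^2 < \tfrac12\Bigl(\tfrac12\|y-X\hat\beta\|_2^2 + \tfrac12\|y-X\tilde\beta\|_2^2\Bigr).$$
Combining this with the (weak) convexity inequality $\pen\bigl(\tfrac{\hat\beta+\tilde\beta}{2}\bigr) \le \tfrac12\pen(\hat\beta) + \tfrac12\pen(\tilde\beta)$ yields $f\bigl(\tfrac{\hat\beta+\tilde\beta}{2}\bigr) < \tfrac12 f(\hat\beta) + \tfrac12 f(\tilde\beta) = f^\star$, contradicting minimality of $f^\star$. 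Hence $X\hat\beta = X\tilde\beta$.

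For the second claim, once $X\hat\beta = X\tilde\beta$ the squared-error terms coincide, so $\tfrac12\|y-X\hat\beta\|_2^2 = \tfrac12\|y-X\tilde\beta\|_2^2$. Since $f(\hat\beta) = f(\tilde\beta) = f^\star$, subtracting these equal quadratic terms forces $\lambda\pen(\hat\beta) = \lambda\pen(\tilde\beta)$, and dividing by $\lambda > 0$ gives $\pen(\hat\beta) = \pen(\tilde\beta)$.

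I expect no serious obstacle here: the only points requiring care are that $\pen$ is finite-valued and convex (so the midpoint lies in the domain and the convexity estimate applies) and that $\Spen(y) \neq \emptyset$, which is precisely Proposition~\ref{prop:existence}; polyhedrality of the gauge plays no role. The crux of the argument is simply that strict convexity of $f$ along the $Xb$-direction already pins down $Xb$ on the solution set, after which equality of penalty values is immediate from equality of objective values.
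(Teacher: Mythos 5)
Your proof is correct and follows essentially the same route as the paper's: a midpoint argument using strict convexity of $\mu \mapsto \|y-\mu\|_2^2$ in the fitted values to force $X\hat\beta = X\tilde\beta$, after which equality of the penalty values follows from equality of the objective values. The only cosmetic difference is that you explicitly invoke non-emptiness and convexity of the solution set, which the paper does not need to state.
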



\begin{proof}
Assume that $X\hat\beta \neq X\tilde\beta$ for some $\hat\beta,
\tilde\beta \in \SpenL(y)$ and let $\check\beta = (\hat\beta +
\tilde\beta)/2$. Because the function $\mu \in \R^n \mapsto
\|y-\mu\|_2^2$ is strictly convex, one may deduce that
$$
\|y - X\check\beta\|_2^2 < 
\frac{1}{2}\|y - X\hat\beta\|_2^2 + \frac{1}{2}\|y - X\tilde{\beta}\|_2^2.
$$ 
Consequently,
$$
\frac{1}{2}\|y - X\check\beta\|_2^2 + \lambda\pen(\check\beta) < 
\frac{1}{2}\left(\frac{1}{2}\|y - X\hat \beta\|_2^2 + \lambda\pen(\hat\beta ) 
+ \frac{1}{2}\|y - X\tilde\beta\|_2^2 + \lambda\pen (\tilde\beta)\right),
$$
which contradicts both $\hat \beta$ and $\tilde \beta$ being
minimizers. Finally, $X\hat\beta = X\tilde\beta$ clearly implies
$\pen(\hat\beta) = \pen(\tilde\beta)$.
\end{proof}


\begin{proof}[Proof of Theorem~\ref{thm:unique-gauge}]
($\implies$) Assume that there exists a face $F$ of $B^* =
\conv\{u_1,\dots,u_k\}$ that intersects $\row(X)$ and satisfies
$\dim(F) < \defect(X)$. By Lemma~\ref{lem:subdiff-faces}, $F =
\dpen(\hat\beta)$ for some $\hat\beta \in \R^p$. Let $z \in \R^n$ with
$X'z \in F$, which exists by assumption. Now let $y = X\hat\beta +
\lambda z$. Note that $\hat\beta \in \SpenL(y)$ since
$$
0 \in X'X\hat\beta - X'y +
\lambda\dpen(\hat\beta) \iff 
\frac{1}{\lambda}X'(y - X\hat\beta) = X'z \in \dpen(\hat\beta).
$$
We now construct $\tilde\beta \in \SpenL(y)$ with $\tilde\beta \neq
\hat\beta$. According to Lemma~\ref{lem:subdiff-faces},
$\dpen(\hat\beta) = \conv\{u_l : l \in I\}$ where $I =
I_\pen(\hat\beta) = \{l \in [k]: u_l'\hat\beta = \pen(\hat\beta)\}$
and thus $u_l'\hat \beta < \pen(\hat\beta)$ whenever $l \notin I$. We
now show that it is possible to pick $h \in \ker(X)$ with $h \neq 0$
but $u_l'h = 0$ for all $l \in I$. We then make $h$ small enough such
that $u_l'(\hat\beta + h) \leq \pen(\hat\beta)$ still holds for all $l
\notin I$, which in turn implies that $\pen(\hat\beta + h) =
\max\{u_l'\hat\beta : l \in I\} = \pen(\hat\beta)$. This, together
with $X\hat\beta = X(\hat\beta + h)$, yields $\hat\beta \neq
\tilde\beta = \hat\beta + h \in \SpenL(y)$ also. We now show that
$\ker(X) \cap \col(U)^\perp \neq \{0\}$, where $U = (u_l)_{l \in I}
\in \R^{p \times |I|}$. For this, we distinguish two cases:


1) Assume that $0 \in \aff\{u_l : l \in I\}$. Then $\aff\{u_l : l \in
I\} = \col(U)$ and $\dim(F) = \rk(U) < \defect(X)$. This implies
that 
$$
\dim(\ker(X)) + \dim(\col(U)^\perp) > p,
$$ which proves what was claimed.


2) Assume that $0 \notin \aff\{u_l : l \in I\}$.  Note that this implies
that $v = X'z \in \row(X) \cap \conv\{u_l : l \in I\}$ satisfies $X'z \neq
0$. We also have $\rk(U) = \dim(\aff\{u_l : l \in I\}) + 1 = \dim(F) +
1 \leq \defect(X)$ which implies that 
$$
\dim(\ker(X)) + \dim(\col(U)^\perp) \geq p.
$$
If $\ker(X) \cap \col(U)^\perp = \{0\}$, then $\R^p = \ker(X) \oplus
\col(U)^\perp$. But we also have $\ker(X) \subseteq v^\perp$ as well as 
$\col(U)^\perp \subseteq v^\perp$, yielding a contradiction and proving the claim.

\medskip


\noindent ($\impliedby$) Assume that there exists $y \in \R^n$ such that
$\hat\beta, \tilde\beta \in \SpenL(y)$ with $\hat\beta \neq
\tilde\beta$. We then have
$$
\frac{1}{\lambda}X'(y-X\hat\beta) \in \dpen(\hat\beta) \;\; \text{ \rm and } \;\; 
\frac{1}{\lambda} X'(y-X\tilde\beta) \in \dpen(\tilde{\beta}).
$$
According to Lemma~\ref{lem:fitted-values}, $X\hat\beta =
X\tilde\beta$, thus $\frac{1}{\lambda}X'(y-X\hat\beta) =
\frac{1}{\lambda}X'(y-X\tilde\beta)$. Consequently, one may deduce
that $\row(X)$ intersects the face $\dpen(\hat\beta) \cap
\dpen(\tilde\beta)$. Let $F^* = \conv\{u_l : l \in I^*\}$ be a face of
$\dpen(\hat\beta) \cap \dpen(\tilde\beta)$ of smallest dimension among
all faces of $\dpen(\hat\beta) \cap \dpen(\tilde\beta)$ intersecting
$\row(X)$. By minimality of $\dim(F^*)$, $\row(X)$ intersects the
relative interior of $F^*$, namely, there exists $z \in \R^n$ such
that $v = X'z$ lies in $F^*$, but not on a proper face of $F^*$. We
will now show that if $\dim(F^*) \geq \defect(X)$, then $\row (X)$
intersects a proper face of $F^*$, yielding a contradiction.


For this, first observe that $\dim(F^*) = \dim(\aff\{u_l : l \in
I^*\})$ and that we can write the affine space $\aff\{u_l : l \in
I^*\} = u_{l_0} + \col(\tilde U^*)$ where $l_0 \in I^*$ and $\tilde
U^* = (u_l \!-\! u_{l_0})_{l \in I^*\setminus\{l_0\}} \in \R^{p \times
|I^*| - 1}$, implying that $\dim(F^*) = \rk(\tilde U^*)$.


Now let $h = \hat\beta - \tilde\beta \neq 0$. Clearly, $h \in
\ker(X)$. Moreover, since $\pen(\hat\beta) =
\pen(\tilde\beta)$ by Lemma~\ref{lem:fitted-values}, and since $u_l \in \dpen(\hat\beta) \cap
\dpen(\tilde\beta)$ for all $l \in I^*$, by
Lemma~\ref{lem:subdiff-faces}, we get
$$
u_l'h = u_l'\hat\beta - u_l'\tilde\beta = \pen(\hat\beta) -
\pen(\tilde \beta) = 0 \; \forall l \in I^*.
$$
Therefore, $h \in \ker(X) \cap \col(U^*)^\perp$, where $U^* = (u_l)_{l
\in I^*} \in \R^{p \times |I^*|}$. Assume that $\dim(F^*) \geq
\defect(X)$. Then
$$
\dim(\row(X)) + \dim(\col(\tilde U^*)) \geq \rk(X) + \defect(X) = p.
$$
If $\row(X) \cap \col(\tilde U^*) = \{0\}$, then $\R^p = \row(X)
\oplus \col(\tilde U^*)$. However, the last relationship cannot hold
since $\row(X) = \ker(X)^\perp \subseteq h^\perp$ as well as
$\col(\tilde U^*) \subseteq \col(U^*) \subseteq h^\perp$, where $h
\neq 0$. Consequently, there exists $0 \neq \tilde v \in \row(X) \cap
\col(\tilde U^*)$. The affine line $L = \{X'z + t\tilde v : t \in \R\}
\subseteq \row(X)$ intersects the relative interior of $F^*$ at $t =
0$ and clearly lies in $\aff(F^*) = u_{l_0} + \col(\tilde U^*)$, since
$X'z \in F^*$ and $\tilde v \in \col(\tilde U^*)$. Therefore, $L$ must
intersect a proper face of $F^*$ by Lemma~\ref{lem:polytope-facts}.
But then also $\row(X)$ intersects a proper face of $F^*$, which
yields the required contradiction.
\end{proof}

\subsubsection*{Proof of Proposition~\ref{prop:unique-sym_gauge}}

Before turning to Proposition~\ref{prop:unique-sym_gauge}, we prove
the following lemma.

\begin{lemma} \label{lem:proper_face} 
Let $\pen$ be a symmetric polyhedral gauge defined by $\pen(b) =
\max\{\pm u_1'b,\dots,\pm u_k'b\}$ for some $u_1,\dots,u_k \in \R^p$.
Let $F$ be a face of $B^* = \conv\{\pm u_1,\dots,\pm u_k\}$. If $0 \in
F$ then $ F = B^*$.
\end{lemma}

\begin{proof}
According to Lemma~\ref{lem:subdiff-faces}, there exists $b \in \R^p$
such that $F = \dpen(b) = \{s \in B^* : s'b = \pen(b)\}$. Since $0 \in
F$, $\pen(b) = 0$ and by symmetry, $u_l'b  = 0 = \pen(b)$ for all $l
\in [p]$. The latter already implies that $\pm u_l \in F$ for all $l \in
[p]$ and therefore, using the convexity of $F$, we get
$B^* = \conv\{\pm u_l : l \in [p]\} \subseteq F$. Consequently, $F = B^*$ must hold.
\end{proof}

\begin{proof}[Proof of Proposition~\ref{prop:unique-sym_gauge}]
Since $\pen$ is symmetric, we have
$$
\pen(b)=0 \iff u_l'b = 0 \; \forall l \in [k] \iff b \in \col(U)^\perp,
$$
where $U = (u_1, \dots, u_k) \in \R^{p \times k}$, so that clearly,
$\ker(\pen)$ is a vector space.

\ref{enum:X}. Let $X \in \R^{n \times p}$. We have $\defect(X) \geq p
- n$, so that $\dim(\{b\in \R^p:\pen(b)=0\}) > n$ implies that
$\ker(\pen) \cap \ker(X) \neq \{0\}$. Consequently, for any $\hat\beta
\in \SpenL(y)$ for arbitrary $y \in \R^n$ and $\lambda > 0$, we have
$\hat\beta + (\ker(\pen) \cap \ker(X)) \subseteq \SpenL(y)$.
Therefore, $\SpenL(y)$ is unbounded.

\medskip

\ref{enum:mu}. The Lebesgue measure on $\R^{n \times p}$ and the
standard Gaussian measure on $\R^{n \times p}$ are equivalent. Let $Z$
be a random matrix in $\R^{n \times p}$ having iid entries following a
standard normal distribution. It therefore suffices to show that the
event $\{\exists \text{ a face } F \text{ of } B^* : \row(Z) \cap F
\neq \emptyset, \dim(F) < \defect(Z)\}$ has zero probability. Note
that for $n \geq p$, $\ker(Z) = \{0\}$ almost surely and the
probability of the above event is indeed equal to zero.

Now assume that $n < p$. Note that $\row(Z)$ trivially intersects  $
B^* = \conv\{\pm u_1,\dots,\pm u_k\}$ at $0$. We first prove that
$\P_Z(\dim(B^*) < \defect(Z)) = 0$.  Since $0 \in B^*$, we have
$\dim(B^*) = \rk(U)$. As $\dim(\col(U)^\perp) \leq n$ by assumption,
we have $\dim(B^*) \geq p - n$. Moreover, $\defect(Z) = p - n$ almost
surely, implying that $\P_Z(\dim(B^*) < \defect(Z)) = 0$. Now, let $F$
be a proper face of $B^*$ for which $\dim(F) < p - n$. We prove that
$\P_Z \left(\row(Z) \cap F \neq \emptyset \right) = 0$. Recall that if
$V = (V_1,\dots,V_n) \in \R^{q \times n}$ has iid $\mN(0,1)$ entries,
then $\P_V(v \in \col(V)) = 0$, where $q \geq n+1$  and $v \neq 0$,
see Lemma~20 in~\cite{SchneiderTardivel22}. Note that $\codim(F) > n$
and $0 \notin \aff(F)$. Indeed, $0 \in B^*$ and according to
Lemma~\ref{lem:proper_face}, a proper face of $B^*$ does not contain
the origin thus $0 \notin F$. Since $F = \aff(F)\cap B^*$ one may
deduce $0 \notin \aff(F)$ also. There exists $A \in \R^{q \times p}$
with $q = \codim(F)$ and orthonormal rows, as well as $v \in \R^q$, $v
\neq 0$ such that $\aff(F) = \{b \in \R^p: Ab = v\}$. Since $AA' =
\I_q$, $AZ' \in \R^{q \times n}$ has iid $\mN(0,1)$ entries, we have
\begin{equation} \label{eq:negligible}
\P_Z \left(\row(Z) \cap F \neq \emptyset \right) \leq 
\P_Z(\row(Z) \cap \aff(F) \neq \emptyset) = \P_Z(v \in \col(AZ')) = 0.  
\end{equation}
Let $\mF(B^*)$ denote the set of faces of the polytope $B^*$ which is
finite. Using Theorem~\ref{thm:unique-gauge} and since $\defect(Z) = p
- n$ almost surely, the following equalities hold.
\begin{align*}
\P_Z(\exists y \in \R^n, |\SpenZL(y)| > 1) & =
\P_Z\bigg(\bigcup_{\underset{\dim(F) < \defect(Z)}{F \in
\mF(B^*)}}\{\row(Z) \cap F \neq \emptyset\}\bigg) \\ 
& = \P_Z\bigg(\bigcup_{\underset{\dim(F) <p-n}{F \in \mF(B^*)}}\{\row(Z)
\cap F\neq \emptyset\}\bigg) = 0,
\end{align*}
where the last equality is a consequence of~\eqref{eq:negligible}.
\end{proof}

\section{Appendix -- Additional results} \label{app:add-results}

\subsection{Existence of a minimizer} \label{subapp:existence}

We show that the optimization problem of interest in this article
always has a minimizer.

\begin{proposition} \label{prop:existence}
Let $X \in \R^{n\times p}$, $y \in \R^n$, $\pen(x) =
\max\{u_1'x,\dots,u_l'x\}$ where $u_1,\dots,u_l \in \R^p$ with $u_1 =
0$. For 
$$
f: b \in \R^p \mapsto \frac{1}{2} \|y - Xb\|_2^2 + \lambda\pen(b),
$$
the optimization problem 
$
\min_{b\in \R^p} f(b)
$
has at least one minimizer. 
\end{proposition}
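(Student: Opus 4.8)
The plan is to exploit the polyhedrality of $\pen$ by lifting the problem to an epigraph formulation, turning it into the minimization of a convex quadratic over a polyhedron, and then to invoke the classical theorem of Frank and Wolfe on quadratic programs.

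First I would record two elementary facts. The function $f$ is convex and continuous on $\R^p$, being the sum of the convex quadratic $b \mapsto \tfrac12\|y-Xb\|_2^2$ and the continuous convex gauge $\lambda\pen$; and $f \ge 0$ since both summands are non-negative, so $\mu := \inf_{b \in \R^p} f(b)$ is finite. Note that a direct appeal to the Weierstrass theorem is not available here, because $f$ need not be coercive: when $\pen$ is only a semi-norm and $\ker(X) \neq \{0\}$, $f$ is constant along any direction $d \in \ker(X)$ with $\pen(d) = 0$; moreover this set of directions need not be a linear subspace when $\pen$ is not a semi-norm, so the standard ``recession cone equals lineality space'' criterion for existence of minimizers does not apply directly either.

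Next, writing $\pen(b) = \max\{u_1'b, \dots, u_l'b\}$ with $u_1 = 0$, I would introduce the polyhedron
\[
P = \{(b,s) \in \R^{p+1} : u_i'b \le s \text{ for } i = 1, \dots, l\},
\]
which is non-empty (it contains $(0,0)$, as $u_1 = 0$), together with $q(b,s) = \tfrac12\|y - Xb\|_2^2 + \lambda s$, a convex quadratic function on $\R^{p+1}$. For $(b,s) \in P$ one has $s \ge \max_i u_i'b = \pen(b) \ge 0$, hence $q(b,s) \ge \tfrac12\|y-Xb\|_2^2 + \lambda\pen(b) = f(b) \ge 0$; in particular $q$ is bounded below on $P$. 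By the Frank--Wolfe theorem --- a quadratic function bounded below on a non-empty polyhedron attains its infimum on that polyhedron --- there exists $(b^\star, s^\star) \in P$ minimizing $q$ over $P$.

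Finally I would transfer this back to $f$. At an optimal $(b^\star, s^\star)$ one necessarily has $s^\star = \pen(b^\star)$, since otherwise $(b^\star, s^\star - \delta) \in P$ for small $\delta > 0$ with strictly smaller $q$-value; thus $q(b^\star, s^\star) = f(b^\star)$. On the other hand, for every $b \in \R^p$ the pair $(b, \pen(b))$ lies in $P$ with $q(b, \pen(b)) = f(b)$, whence $f(b^\star) = \min_{P} q \le f(b)$. Therefore $b^\star$ is a global minimizer of $f$, which proves the claim. The only non-routine ingredient is the Frank--Wolfe theorem; the crux of the argument is the observation that polyhedrality of the gauge allows one to replace the possibly non-coercive problem \eqref{eq:pen_problem} by a quadratic program, for which solvability under boundedness from below is classical.
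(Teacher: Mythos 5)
Your proof is correct, and it takes a genuinely different route from the paper. You lift the problem to the epigraph polyhedron $P = \{(b,s): u_i'b \le s,\ i \in [l]\}$, observe that the resulting objective $q(b,s)$ is a quadratic function bounded below on $P$ (here the assumption $u_1 = 0$ is used exactly where it should be, to get $s \ge 0$ and hence $q \ge 0$ on $P$), and invoke the Frank--Wolfe theorem; the descent argument forcing $s^\star = \pen(b^\star)$ at the optimum and the identification $q(b,\pen(b)) = f(b)$ then transfer attainment back to $f$ cleanly. The paper instead argues from first principles: it takes a minimizing sequence $(\beta_m)$, shows that $X\beta_m$ and $\pen(\beta_m)$ converge to limits $l$ and $\gamma$ independent of the sequence (via strict convexity of $z \mapsto \|y-z\|_2^2$), solves the constrained problem $\min \|y - Xb\|_2^2$ subject to $\pen(b) \le \gamma$ by projecting $y$ onto the closed convex set $XP_\gamma$, and finally handles the case $\gamma = 0$ using the decomposition of the polyhedron $P_\gamma$ into its recession cone plus a polytope. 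Your argument is considerably shorter and isolates the one place where polyhedrality is essential (a general convex gauge would not yield a polyhedral epigraph, and indeed the statement can fail for non-polyhedral penalties), but it outsources the real analytic work to Frank--Wolfe, whose proof is itself nontrivial and typically proceeds by a polyhedral decomposition not unlike the one the paper carries out by hand; the paper's proof has the advantage of being self-contained modulo standard facts about polyhedra. Both are valid; if you use your version, state the Frank--Wolfe theorem precisely and give a reference.
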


For the remainder of this section, without loss of generality, we set $\lambda = 1$ since
otherwise, this parameter can be absorbed into the penalty function.
The proof of Proposition \ref{prop:existence} relies on the following
two lemmas.

\begin{lemma} \label{lem:min_sequence}
Let the assumptions of Proposition~\ref{prop:existence} hold and let
$(\beta_m)_{m \in \N}$ be a minimizing sequence of $f$:
$$
\lim_{m \to \infty} f(\beta_m) = \inf_{b \in \R^p} f(b).
$$
Then also $(X\beta_m)_{m \in \N}$ and $(\pen(\beta_m))_{m \in \N}$ converge.
Moreover, these limits do not depend on the minimizing sequence.
\end{lemma}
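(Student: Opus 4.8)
The plan is to avoid assuming that a minimizer exists (that is exactly the content of Proposition~\ref{prop:existence}) and to work entirely with the infimum $f^\star := \inf_{b\in\R^p} f(b)$, which is finite because $f\ge 0$ (both summands are nonnegative, the penalty since $\pen$ is a gauge and $u_1=0$) and $f(0)=\tfrac12\|y\|_2^2<\infty$. The key structural fact is that the data-fidelity term is \emph{strictly} convex as a function of the fitted value $Xb$, which I would exploit through the parallelogram identity in $\R^n$ rather than through any compactness or coercivity argument.

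First I would show that $(X\beta_m)_{m\in\N}$ is Cauchy. Fix indices $m,m'$ and set $\gamma=(\beta_m+\beta_{m'})/2$, so that $X\gamma=\tfrac12(X\beta_m+X\beta_{m'})$. The parallelogram identity gives
\[
\tfrac12\|y-X\gamma\|_2^2=\tfrac14\|y-X\beta_m\|_2^2+\tfrac14\|y-X\beta_{m'}\|_2^2-\tfrac18\|X\beta_m-X\beta_{m'}\|_2^2,
\]
while convexity of $\pen$ yields $\pen(\gamma)\le\tfrac12\pen(\beta_m)+\tfrac12\pen(\beta_{m'})$. Adding these and using $f(\gamma)\ge f^\star$ gives
\[
\tfrac18\|X\beta_m-X\beta_{m'}\|_2^2\le\tfrac12 f(\beta_m)+\tfrac12 f(\beta_{m'})-f^\star,
\]
and the right-hand side tends to $0$ as $m,m'\to\infty$ because $(\beta_m)$ is a minimizing sequence. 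Hence $(X\beta_m)_{m\in\N}$ is Cauchy and converges to some $\mu^\star\in\R^n$.

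The convergence of $(\pen(\beta_m))_{m\in\N}$ then follows at once: from $f(\beta_m)=\tfrac12\|y-X\beta_m\|_2^2+\pen(\beta_m)\to f^\star$ and the continuity of $\mu\mapsto\|y-\mu\|_2^2$ at $\mu^\star$, we get $\pen(\beta_m)\to f^\star-\tfrac12\|y-\mu^\star\|_2^2=:\rho^\star$. For independence of the minimizing sequence, given a second minimizing sequence $(\tilde\beta_m)$ I would apply the same midpoint estimate to the mixed pair $(\beta_m+\tilde\beta_m)/2$, obtaining $\|X\beta_m-X\tilde\beta_m\|_2\to0$, hence $X\tilde\beta_m\to\mu^\star$ and therefore $\pen(\tilde\beta_m)\to\rho^\star$ by the same reasoning; alternatively one interleaves the two sequences into a third minimizing sequence whose $X$-images converge, which forces its two subsequential limits to coincide.

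The main point to be careful about — rather than a genuine obstacle — is to never invoke attainment of $f^\star$, so that the entire argument is phrased with infima only. It is precisely the strict convexity of the squared Euclidean norm (encoded in the parallelogram identity) that lets one extract Cauchy-ness of the fitted values without any compactness input, which is what makes the lemma, and subsequently the existence proof, work for an arbitrary polyhedral gauge $\pen$.
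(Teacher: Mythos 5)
Your proof is correct, and it takes a genuinely different route from the paper's. The paper first observes that $(X\beta_m)_{m\in\N}$ is bounded, extracts arbitrary convergent subsequences (from possibly two different minimizing sequences), and rules out distinct subsequential limits by a strict-convexity contradiction at the midpoint; uniqueness of the accumulation point of a bounded sequence then yields convergence. You instead quantify the same strict convexity via the parallelogram identity, obtaining the explicit estimate $\tfrac18\|X\beta_m-X\beta_{m'}\|_2^2\le\tfrac12 f(\beta_m)+\tfrac12 f(\beta_{m'})-f^\star$, which makes $(X\beta_m)_{m\in\N}$ Cauchy outright and handles the independence of the limit by the same inequality applied to mixed pairs. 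Your version avoids Bolzano--Weierstrass entirely, is more quantitative, and is arguably cleaner; like the paper's argument it uses only convexity and non-negativity of $\pen$ (so it shares the generality noted in the paper's remark that the lemma holds for any non-negative convex penalty). Both proofs correctly avoid assuming attainment of the infimum, and your bookkeeping with $\lambda$ absorbed into the penalty matches the paper's convention for that section.
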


\begin{proof}
The sequence $(X\beta_m)_{n\in \N}$ is bounded. Otherwise, $\|y -
X\beta_m\|_2^2$ would be unbounded also, contradicting $\inf\{f(b) :
b\in \R^p\} \leq f(0) < \infty$. Let $\tilde\beta_m$ be another
minimizing sequence. Note that also $X\tilde\beta_m$ is bounded. Now
extract arbitrary converging subsequences $(X\beta_{n_m})_{m \in \N}$
and $(X\tilde\beta_{\tilde n_m})_{m \in \N}$ with limits $l$ and
$\tilde l$, respectively. Note that $(\beta_{n_m})_{m \in \N}$ and
$(\tilde\beta_{\tilde n_m})_{m \in \N}$ are still minimizing sequences
so that also $\pen(\beta_{n_m})$ and $\pen(\tilde \beta_{\tilde n_m})$
must converge. We now show that $l = \tilde l$. If $l \neq \tilde l$,
set $\bar\beta_m = (\beta_{n_m} + \tilde\beta_{\tilde n_m})/2$. By the
above considerations, $(f(\bar\beta_m))_{m \in \N}$ is convergent.
Since the function $z \in \R^n \mapsto \|y - z\|_2^2$ is strictly
convex and $\pen$ is convex, we may deduce that
\begin{align*}
\limsup_{m \to \infty} f(\bar\beta_m) & \leq \frac{1}{2} \|y - (l + \tilde l)/2\|_2^2 
+ \limsup_{m \to \infty} \pen(\bar\beta_m) \\
& < \frac{1}{2}\left(\|y-l\|_2^2/2 + \|y-\tilde l\|^2_2/2\right) 
+ \lim_{m \to \infty}\pen(\beta_{n_m})/2 + \lim_{m \to \infty}\pen(\tilde\beta_{\tilde n_m})/2 \\
& = \frac{1}{2}\lim_{m \to \infty} f(\beta_{n_m}) + \frac{1}{2}\lim_{m \to \infty}f(\tilde\beta_{\tilde n_m})
= \inf_{b \in \R^p} f(b),
\end{align*}
yielding a contradiction. Since the selection of convergent
subsequences was arbitrary, this implies that $(X\beta_m)_{m \in \N}$
and $(X\tilde\beta_m)_{m \in \N}$ share a unique limit point and that
the sequences $(\pen(\beta_m))_{m \in \N}$ and
$(\pen(\tilde\beta_m))_{m \in \N}$ converges as well.
\end{proof}
We remark that Lemma~\ref{lem:min_sequence} also holds for any
non-negative, convex function in place of the polyhedral gauge $\pen$.

\begin{lemma} \label{lem:constrained}
Let the assumptions of Proposition~\ref{prop:existence} hold and let
$\gamma \geq 0$. The optimization problem
\begin{equation} \label{eq:constrained}
\min_{b \in \R^p} \|y - Xb\|^2_2 \;\; \text{ \rm subject to } \;\;
\pen(b) \leq \gamma
\end{equation}
has at least one minimizer. 
\end{lemma}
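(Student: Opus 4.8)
The plan is to reduce the constrained least-squares problem to a Euclidean projection onto a closed convex set, exploiting that the feasible region is a polyhedron. (The assertion is in fact a special case of the Frank--Wolfe theorem --- a quadratic function bounded below on a nonempty polyhedron attains its infimum there --- but the argument sketched below is self-contained and avoids invoking it.)

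First I would observe that the feasible set
$$
C := \{b \in \R^p : \pen(b) \le \gamma\} = \{b \in \R^p : u_l'b \le \gamma \text{ for all } l \in [k]\}
$$
is a polyhedron, and it is nonempty since $\pen(0) = 0 \le \gamma$, so $0 \in C$. The crucial step is then to note that its image $XC := \{Xb : b \in C\} \subseteq \R^n$ under the linear map $X$ is again a polyhedron, in particular a nonempty closed convex set. This follows from standard polyhedral geometry: by the Minkowski--Weyl decomposition one may write $C = \conv\{p_1,\dots,p_N\} + \cone\{r_1,\dots,r_M\}$ for finitely many $p_i, r_j \in \R^p$, whence $XC = \conv\{Xp_1,\dots,Xp_N\} + \cone\{Xr_1,\dots,Xr_M\}$ is finitely generated, hence a polyhedron (equivalently, one may argue by Fourier--Motzkin elimination).

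Next, since $XC$ is nonempty, closed and convex, the Euclidean projection $\mu^*$ of $y$ onto $XC$ exists, i.e.\ there is $\mu^* \in XC$ with $\|y - \mu^*\|_2 \le \|y - \mu\|_2$ for every $\mu \in XC$. Choosing any $b^* \in C$ with $Xb^* = \mu^*$ --- possible precisely because $\mu^* \in XC$ --- I would conclude that for every feasible $b \in C$,
$$
\|y - Xb^*\|_2^2 = \|y - \mu^*\|_2^2 \le \|y - Xb\|_2^2,
$$
so that $b^*$ solves \eqref{eq:constrained}.

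The only non-routine ingredient is the closedness of $XC$: linear images of arbitrary closed convex sets need not be closed, so it is essential here that $C$ is polyhedral; everything else (existence of the metric projection onto a nonempty closed convex set, and reading off a minimizer from a point of $C$ over the fibre of $\mu^*$) is standard. Should one prefer to mirror the minimizing-sequence style of the neighbouring lemmas, one can instead take $\beta_m \in C$ with $\|y - X\beta_m\|_2^2 \to v^* := \inf_{b \in C}\|y - Xb\|_2^2$, note that $(X\beta_m)_m$ is bounded (otherwise the objective would be unbounded, contradicting $v^* \le \|y\|_2^2$), pass to a subsequence with $X\beta_m \to \mu$, and then invoke the same polyhedral fact to get $\mu \in XC$, i.e.\ $\mu = Xb^*$ for some feasible $b^*$, which is the desired minimizer since $\|y - Xb^*\|_2^2 = \|y - \mu\|_2^2 = v^*$.
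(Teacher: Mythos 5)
Your proof is correct and follows essentially the same route as the paper's: reduce the problem to the Euclidean projection of $y$ onto the image $XC$ of the feasible polyhedron and pull back a minimizer from the fibre over the projection. In fact you are slightly more careful than the paper, which asserts closedness of $XP_\gamma$ without comment, whereas you correctly identify polyhedrality (via Minkowski--Weyl) as the reason the linear image stays closed.
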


\begin{proof}
Let $P_\gamma = \{b \in \R^p : \pen(b) \leq \gamma\}$ be the closed
and convex feasible region of \eqref{eq:constrained}. We set $z = Xb$
and note that the linearly transformed set $XP_\gamma$ is still closed
and convex. Therefore, the minimization problem
$$
\min \; \|y-z\|^2_2 \;\; \text{ \rm subject to } \;\; z \in XP_\gamma
$$
has a unique solution $\hat z \in  XP_\gamma$, namely, the projection
of $y$ onto $XP_\gamma$. Consequently, $\hat z = X\hat b$ for some
$\hat b \in P_\gamma$, where $\hat b$ is not necessarily unique.
Finally, $\hat b$ clearly is a solution of the optimization problem
\eqref{eq:constrained}.
\end{proof}

Before we turn to the proof of Proposition~\ref{prop:existence}, we
make the following observations. Note that we can decompose the
polyhedron $P_\gamma = \{b \in \R^p: \pen(b) \leq \gamma\} = \{b \in
\R^p : u_1'b,\dots,u_l'b \leq \gamma\}$, where $\gamma \geq 0$, into
the sum of a polyhedral cone (the so-called recession cone of
$P_\gamma$) and a polytope, \citep[see, e.g.,][Theorem~1.2 and
Proposition~1.12]{Ziegler12}. For $\gamma = 1$, we can therefore write
$$
P_1 = \{b \in \R^p: u_1'b \leq 0,\dots,u_l'b \leq 0\} + E,
$$
where $E$ is a polytope and therefore bounded. For arbitrary $\gamma
\geq 0$, we then write
\begin{equation} \label{eq:Pgamma}
P_\gamma = P_0 + \gamma E.
\end{equation}

\begin{proof}[Proof of Proposition~\ref{prop:existence}]
Let $(\beta_m)_{m \in N}$ be a minimizing sequence of $f$. By
Lemma~\ref{lem:min_sequence}, both sequences $(X\beta_m)_{m \in \N}$
and $(\pen(\beta_m))_{m \in \N}$ converge to, say, $l$ and $\gamma$,
respectively. This implies that
$$
\frac{1}{2}\|y - l\|_2^2 + \gamma = \inf_{b \in \R^p} f(b).
$$ 
Let $\hat\beta$ be an arbitrary solution of \eqref{eq:constrained}. We
prove that $f(\hat\beta) = \|y - l\|^2_2 + \gamma$. For this, we
distinguish the following two cases.

1) Assume that $\gamma > 0$. For $n$ large enough so that
$\pen(\beta_m) > 0$, we set $u_n$ as
$$
u_m = \frac{\gamma}{\pen(\beta_m)}\beta_m.
$$
Clearly, $\pen(u_m) = \gamma$ so that $u_m \in P_\gamma$.
Consequently, by definition of $\hat\beta$, we have $\|y - X\hat\beta\|_2^2 \leq
\|y - Xu_m\|_2^2$ and $\pen(\hat\beta) \leq \gamma$, so that
$$
f(\hat\beta) = \frac{1}{2} \left\|y - X\hat\beta\right\|^2_2 +
\pen(\hat\beta) \leq \frac{1}{2}\left\|y - Xu_m\right\|^2_2 + \gamma
\longrightarrow \frac{1}{2}\|y-l\|^2_2 + \gamma
$$
as $m \to \infty$, implying $f(\hat\beta) = \inf\{f(b) : b \in \R^p\}$.
 
2) Assume that $\gamma = 0$. Using \eqref{eq:Pgamma}, we can write
$\beta_m = u_m + \pen(\beta_m) v_m$ with $u_m \in P_0$ and $v_m \in
E$, where $E$ is bounded. Since $X\beta_m \to l$ and $\pen(\beta_m)v_n
\to 0$ one may deduce that also $Xu_m \to l$, yielding
$$
f(\hat\beta) = \frac{1}{2}\left\|y - X\hat\beta\right\|^2_2 \leq
\frac{1}{2}\left\|y - Xu_m\right\|^2_2
\longrightarrow \frac{1}{2}\left\|y - l\right\|^2_2$$
as $m \to \infty$ implying again that $f(\hat\beta) = \inf\{f(b) : b
\in \R^p\}$ which completes the proof.
\end{proof}

\subsection{A characterization of the noiseless recovery condition for the supremum norm} 
\label{subapp:nrc-sup}

Note that the noiseless recovery condition is always satisfied for
$\beta = 0$. We give a characterization for $\beta \neq 0$ when the
penalty term is given by the supremum norm.

\begin{proposition} 
\label{prop:nrc-sup} Let $X \in \R^{n \times p}$
and $\beta \in \R^p$ where $\beta \neq 0$ and $I = \{j \in [p]:
|\beta_j| < \|\beta\|_\infty\}$. Furthermore, let $\tilde X = (\tilde
X_1|X_{ I})$ where
$$
\tilde X_1 = X_{I^c}\sign(\beta_{I^c}).
$$
Then
$$
\exists  \lambda > 0, \exists \hat\beta \in \SsupL(X\beta) \text{ \rm with } 
\hat\beta \supequiv \beta  \iff  
e_1 \in \row(\tilde X) \text{ \rm and } \| X'(\tilde X')^+ e_1\|_1 \leq 1, 
$$
where $e_1 = (1,0,\dots,0)' \in \R^p $. 
\end{proposition}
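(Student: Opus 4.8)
The plan is to reduce the claim, via the geometric form of the noiseless recovery condition, to a short linear‑algebra problem about $\tilde X$. Recall from Theorem~\ref{thm:nrc-geom} that the noiseless recovery condition for $\beta$ is equivalent to $X'X\lin(C_\beta)\cap\dsup(\beta)\neq\emptyset$, so the work is to make both objects explicit when $\beta\neq 0$. Throughout I would write $\sigma=\sign(\beta_{I^c})$ and let $v\in\R^p$ be the vector with $v_{I^c}=\sigma$ and $v_I=0$, so that $Xv=X_{I^c}\sigma=\tilde X_1$ and $Xe_j=X_j$ (the $j$-th column of $X$).

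First I would compute $\lin(C_\beta)$. Since $\beta\neq 0$, the pattern class is $C_\beta=\{b\in\R^p:\ b_{I^c}=t\sigma \text{ for some } t>0 \text{ and } \|b_I\|_\infty<t\}$, whence $\lin(C_\beta)=\lin(\{v\}\cup\{e_j:j\in I\})$ (one may alternatively derive this from Theorem~\ref{thm:patt-cones} once $\dsup(\beta)$ is known). Applying $X'X$ and using $Xv=\tilde X_1$, $Xe_j=X_j$, this gives $X'X\lin(C_\beta)=\{X'z:z\in\col(\tilde X)\}$ with $\tilde X=(\tilde X_1 | X_I)$. Second, I would compute $\dsup(\beta)$: here $B^*=\dsup(0)=\{s:\|s\|_1\le 1\}$ is the cross‑polytope, so by Lemma~\ref{lem:subdiff-faces} $\dsup(\beta)=\{s\in B^*:s'\beta=\|\beta\|_\infty\}$; splitting $s'\beta$ over $I$ and $I^c$ and using $|\beta_j|=\|\beta\|_\infty$ on $I^c$, $|\beta_j|<\|\beta\|_\infty$ on $I$, one checks that
\[
\dsup(\beta)=\bigl\{s\in\R^p:\ s_I=0,\ \sigma_j s_j\geq 0\ \forall j\in I^c,\ \textstyle\sum_{j\in I^c}|s_j|=1\bigr\}.
\]
Combining these two facts, the noiseless recovery condition becomes: there exists $z\in\col(\tilde X)$ with $X'z\in\dsup(\beta)$.

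Next I would unpack this condition. Membership $X'z\in\dsup(\beta)$ splits into (i) the linear constraints $X_I'z=0$ and $\sum_{j\in I^c}\sigma_j(X_j'z)=(X_{I^c}\sigma)'z=\tilde X_1'z=1$, which together are exactly $\tilde X'z=e_1$, and (ii) the sign constraint $\sigma_j(X_j'z)\geq 0$ for all $j\in I^c$. Because $\col(\tilde X)=\ker(\tilde X')^\perp$, the map $\tilde X'$ is injective on $\col(\tilde X)$, so a $z\in\col(\tilde X)$ with $\tilde X'z=e_1$ exists if and only if $e_1\in\col(\tilde X')=\row(\tilde X)$, and then it is unique and equals the minimum‑norm solution $z^\ast=(\tilde X')^+e_1\in\col(\tilde X)$. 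Finally I would recognize (ii) at $z=z^\ast$ as an $\ell_1$‑bound: writing $s^\ast=X'z^\ast=X'(\tilde X')^+e_1$, we already have $s^\ast_I=0$ and $\sum_{j\in I^c}\sigma_j s^\ast_j=1$, hence $\|s^\ast\|_1=\sum_{j\in I^c}|s^\ast_j|\geq\sum_{j\in I^c}\sigma_j s^\ast_j=1$ always, with equality if and only if $\sigma_j s^\ast_j=|s^\ast_j|\geq 0$ for every $j\in I^c$ — i.e.\ precisely when (ii) holds. Thus (ii) is equivalent to $\|X'(\tilde X')^+e_1\|_1\leq 1$, and altogether the noiseless recovery condition holds if and only if $e_1\in\row(\tilde X)$ and $\|X'(\tilde X')^+e_1\|_1\leq 1$.

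The step I expect to require the most care is getting the two explicit descriptions exactly right: cutting out the correct face $\dsup(\beta)$ of the cross‑polytope (the inequalities $\sigma_j s_j\geq 0$ do matter), and being careful that $X'X\lin(C_\beta)$ equals $\{X'z:z\in\col(\tilde X)\}$ rather than $\{X'z:z\in\R^n\}$, since this restriction on $z$ is exactly what makes $(\tilde X')^+$ the right object. The clean structural point — and the reason the sign constraints disappear from the final statement — is that under the linear constraints one automatically has $\|X'z\|_1\geq 1$, so the hypothesis $\|X'(\tilde X')^+e_1\|_1\leq 1$ forces equality and hence the correct sign pattern for free.
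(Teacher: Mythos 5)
Your proof is correct, and it takes a genuinely different route from the paper's. The paper argues directly from the optimality condition: it introduces the ``clustered'' coordinates $c=(\|\beta\|_\infty,\beta_I')'$ and $\hat c=(\|\hat\beta\|_\infty,\hat\beta_I')'$ so that $X\beta=\tilde Xc$ and $X\hat\beta=\tilde X\hat c$, extracts $\tilde X'\tilde X(c-\hat c)=\lambda e_1$ from the explicit form of $\dsup(\beta)$ for the forward direction, and for the converse explicitly constructs $\hat c=c-\lambda\tilde X^+(\tilde X')^+e_1$ and verifies, for $\lambda$ small enough, that the resulting $\hat\beta$ keeps the pattern of $\beta$ and satisfies the KKT condition. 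You instead route everything through Theorem~\ref{thm:nrc-geom}: computing $\lin(C_\beta)=\lin(\{v\}\cup\{e_j:j\in I\})$ gives $X'X\lin(C_\beta)=\{X'z:z\in\col(\tilde X)\}$, which is exactly the same reduction the paper achieves with the clustered parametrization, and the injectivity of $\tilde X'$ on $\col(\tilde X)=\ker(\tilde X')^\perp$ then identifies the unique candidate $z^\ast=(\tilde X')^+e_1$. What your approach buys is that the ``small enough $\lambda$'' bookkeeping and the explicit construction of $\hat\beta$ are already absorbed into the proof of Theorem~\ref{thm:nrc-geom}, so the remaining argument is pure linear algebra; the cost is reliance on that heavier prerequisite, whereas the paper's proof is self-contained from the KKT conditions. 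Your closing observation --- that the linear constraints force $\|X'z^\ast\|_1\geq 1$ so the hypothesis $\|X'(\tilde X')^+e_1\|_1\leq 1$ forces equality and hence the correct signs --- is the same mechanism that makes the paper's verification of $X'(\tilde X')^+e_1\in\dsup(\beta)$ work, only you make it explicit. Both directions of your argument check out, including the care taken that $z$ must range over $\col(\tilde X)$ rather than all of $\R^n$.
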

Before presenting the proof, recall that the subdifferential of the
$\ell_\infty$-norm at $0$ is the unit ball of the $\ell_1$-norm, and
for $\beta \neq 0$, this subdifferential is equal to
\begin{align}
\dsup(\beta) \nonumber & =
\left\{s \in \R^p:\|s\|_1 \leq  1 \text{ \rm and } s'\beta = \|\beta\|_\infty \right\} \\
\label{eq:sudiff_sup} & = \left\{s \in \R^p: \|s\|_1 = 1 \text{ \rm and } \forall j \in [p] \; 
\begin{cases} s_j \beta_j \geq 0 & \text{ \rm if } |\beta_j| = \|\beta\|_\infty \\
s_j = 0 & \text{ \rm otherwise} \end{cases}\right\}.
\end{align}

\begin{proof}
$(\implies)$ Assume there exists $\lambda > 0$ and $\hat\beta \in
\SsupL(X\beta)$ such that $\hat\beta \supequiv \beta$. Then
\begin{equation}
\label{eq:subdiff_supnorm} \frac{1}{\lambda} X'(X\beta - X\hat\beta) \in
\dsup(\hat\beta) = \dsup(\beta).
\end{equation}
We set $c = (\|\beta\|_\infty,\beta_I')'$ and $\hat c =
(\|\hat\beta\|_\infty,\hat\beta_I')'$. By construction, $\tilde Xc =
X\beta$. Moreover, since $\dsup(\beta) = \dsup(\hat\beta)$, we also
have $\tilde X\hat c = X\hat\beta$. Consequently, by
\eqref{eq:subdiff_supnorm}, we get
$$
\frac{1}{\lambda} X'\tilde X(c - \hat c) \in \dsup(\beta).
$$
Therefore, using \eqref{eq:sudiff_sup}, we get that 
$$
X_I'X(c - \hat c) = 0,
$$
as well as
$$
\beta'\frac{1}{\lambda} X'\tilde X(c - \hat c) 
= \frac{1}{\lambda}\beta_{I^c}'X_{I^c}'\tilde X(c - \hat c) 
= \frac{1}{\lambda}\|\beta\|_\infty\sign(\beta_{I^c})'X_{I^c}'\tilde X(c - \hat c) 
= \frac{1}{\lambda}\|\beta\|_\infty \tilde X_1'\tilde X(c - \hat c) 
= \|\beta\|_\infty,
$$
so that 
$$
\frac{1}{\lambda} \tilde X_1'\tilde X(c - \hat c)  = 1.
$$
Therefore, we may conclude
$$
\frac{1}{\lambda}\tilde X'\tilde X(c - \hat c) = e_1 \implies 
\tilde X(c-\hat c) = \lambda (\tilde X')^+ e_1,
$$
which also yields 
$$
\frac{1}{\lambda}X'(X\beta - X\hat\beta) = \frac{1}{\lambda} X'\tilde X(c - \hat c) =
X'(\tilde X')^+e_1 \in \dsup(\beta).
$$
We therefore immediately get $\|X'(\tilde X')^+e_1\|_1 \leq 1$. It
remains to show that and $e_1 \in \row(\tilde X)$. Note that. analogously to above,
$X'(\tilde X')^+e_1 \in \dsup(\beta)$ implies that
$$
\tilde X'(\tilde X')^+e_1 = e_1.
$$
Since $\tilde X'(\tilde X')^+$ is the orthogonal projection onto $\row(\tilde X)$, 
we may deduce that $e_1 \in \row(\tilde X)$.

\medskip

$(\impliedby)$ As above, let $c = (\|\beta\|_\infty,\beta_I')'$ and
set $\hat c = c - \lambda \tilde X^+(\tilde X')^+e_1 = (\hat c_1,\hat
c_{-1})'$, where the first component is $\hat c_1$ and remaining
components are $\hat c_{-1}$. We define $\hat\beta$ through
$$
\hat\beta_{I^c} = \hat c_1 \sign(\beta_{I^c}) \text{ \rm and } \hat\beta_I = \hat c_{-1}.
$$
Since $c_1 = \|\beta\|_\infty$ for small enough $\lambda > 0$, we have
$\{j \in [p] : |\hat\beta_j| < \|\hat\beta\|_\infty\} = \{j \in [p] :
|\beta_j| < \|\beta\|_\infty\} = I$ as well as $\beta_j \hat\beta_j >
0$ for $j \notin I$. Therefore, for small enough $\lambda$,
$\dsup(\hat\beta) = \dsup(\beta)$ holds. To conclude the proof, it
suffices to show that $\hat\beta \in \SsupL(X\beta)$, i.e.,
$\frac{1}{\lambda}X'(X\beta - X\hat\beta) \in \dsup(\hat\beta)$. Since
$\col((\tilde X')^+) = \col(\tilde X)$ and $\tilde X\tilde X^+$ is the
orthogonal projection onto $\col(\tilde X)$, we get
$$
\frac{1}{\lambda}X'(X\beta - X\hat\beta) = \frac{1}{\lambda}X'(\tilde Xc - \tilde X\hat c) = 
X'\tilde X\tilde X^+(\tilde X')^+e_1 = X'(\tilde X')^+ e_1,
$$
so that left to show is $X'(\tilde X')^+ e_1 \in \dsup(\beta)$, which
holds if both $\|X'(\tilde X')^+e_1\|_1 \leq 1$ and $\beta'X'(\tilde
X')^+e_1 = \|\beta\|_\infty$ are true. The first inequality holds by
assumption. To show the latter, note that the assumption $\tilde
X'(\tilde X')^+ e_1 = e_1$ implies that
$$
\|\beta\|_\infty = c'e_1 = c'\tilde X'(\tilde X')^+ e_1 =
\beta'X'(\tilde X')^+e_1.
$$
Consequently, for $\lambda > 0$ small enough, $\hat\beta \in
\SsupL(X\beta)$.
\end{proof}




\begin{thebibliography}{44}
\expandafter\ifx\csname natexlab\endcsname\relax\def\natexlab#1{#1}\fi

\bibitem[{Ali \& Tibshirani(2019)}]{AliTibshirani19}
\textsc{Ali, A.} \& \textsc{Tibshirani, R.~J.} (2019).
\newblock The generalized lasso problem and uniqueness.
\newblock \textit{Electron. J. Stat.} \textbf{13}, 2307--2347.

\bibitem[{Barbara et~al.(2019)Barbara, Jourani \& Vaiter}]{BarbaraEtAl19}
\textsc{Barbara, A.}, \textsc{Jourani, A.} \& \textsc{Vaiter, S.} (2019).
\newblock Maximal solutions of sparse analysis regularization.
\newblock \textit{J. Opt. Theory App.} \textbf{180}, 374--396.

\bibitem[{Bogdan et~al.(2025)Bogdan, Dupuis, Graczyk, Ko{\l}odziejek, Skalski,
  Tardivel \& Wilczy\'nski}]{BogdanEtAl25}
\textsc{Bogdan, M.}, \textsc{Dupuis, X.}, \textsc{Graczyk, P.},
  \textsc{Ko{\l}odziejek, B.}, \textsc{Skalski, T.}, \textsc{Tardivel, P.} \&
  \textsc{Wilczy\'nski, M.} (2025).
\newblock Pattern recovery by {SLOPE}.
\newblock \textit{Appl. Comput. Harmon. Anal.} , doi:
  https://doi.org/10.1016/j.acha.2025.101810.

\bibitem[{Bogdan et~al.(2015)Bogdan, van~den Berg, Sabatti, Su \&
  Cand{\`e}s}]{BogdanEtAl15}
\textsc{Bogdan, M.}, \textsc{van~den Berg, E.}, \textsc{Sabatti, C.},
  \textsc{Su, W.} \& \textsc{Cand{\`e}s, E.~J.} (2015).
\newblock {SLOPE} -- adaptive variable selection via convex optimization.
\newblock \textit{Annals Appl. Stat.} \textbf{9}, 1103--1140.

\bibitem[{Bondell \& Reich(2008)}]{BondellReich08}
\textsc{Bondell, H.~D.} \& \textsc{Reich, B.~J.} (2008).
\newblock Simultaneous regression shrinkage, variable selection, and supervised
  clustering of predictors with {OSCAR}.
\newblock \textit{Biometrics} \textbf{64}, 115--123.

\bibitem[{B\"uhlmann \& {Van de Geer}(2011)}]{BuehlmannVdGeer11}
\textsc{B\"uhlmann, P.} \& \textsc{{Van de Geer}, S.} (2011).
\newblock \textit{Statistics for High-Dimensional Data: Methods, Theory and
  Applications}.
\newblock Heidelberg: Springer.

\bibitem[{Dupuis \& Tardivel(2024)}]{DupuisTardivel24}
\textsc{Dupuis, X.} \& \textsc{Tardivel, P.} (2024).
\newblock The solution path of {SLOPE}.
\newblock In \textit{Proceedings of The 27th International Conference on
  Artificial Intelligence and Statistics}, vol. 238 of \textit{Proc. Mach.
  Learn. Res.}

\bibitem[{Dupuis \& Tardivel(2022)}]{DupuisTardivel22}
\textsc{Dupuis, X.} \& \textsc{Tardivel, P.~J.~C.} (2022).
\newblock Proximal operator for the sorted $\ell_1$ norm: Application to
  testing procedures based on {SLOPE}.
\newblock \textit{J. Stat. Plann. Inference} \textbf{221}, 1--8.

\bibitem[{Dupuis \& Vaiter(2023)}]{DupuisVaiter23}
\textsc{Dupuis, X.} \& \textsc{Vaiter, S.} (2023).
\newblock The geometry of sparse analysis regularization.
\newblock \textit{SIAM J. Optimiz.} \textbf{33}, 842--867.

\bibitem[{Everink et~al.(2024)Everink, Dong \& Andersen}]{EverinkEtAl24TR}
\textsc{Everink, J.~M.}, \textsc{Dong, Y.} \& \textsc{Andersen, M.~S.} (2024).
\newblock The geometry and well-posedness of sparse regularized linear
  regression.
\newblock Preprint 2409.03461, arxiv.

\bibitem[{Ewald(1996)}]{Ewald96}
\textsc{Ewald, G.} (1996).
\newblock \textit{Combinatorial Convexity and Algebraic Geometry}.
\newblock Springer.

\bibitem[{Ewald \& Schneider(2020)}]{EwaldSchneider20}
\textsc{Ewald, K.} \& \textsc{Schneider, U.} (2020).
\newblock Model selection properties and uniqueness of the {L}asso estimator in
  low and high dimensions.
\newblock \textit{Electron. J. Stat.} \textbf{14}, 944--969.

\bibitem[{Fadili et~al.(2019)Fadili, Garrigos, Malick \&
  Peyr\'{e}}]{FadiliEtAl19}
\textsc{Fadili, J.}, \textsc{Garrigos, G.}, \textsc{Malick, J.} \&
  \textsc{Peyr\'{e}, G.} (2019).
\newblock Model consistency for learning with mirror-stratifiable regularizers.
\newblock In \textit{Proceedings of the Twenty-Second International Conference
  on Artificial Intelligence and Statistics}, vol.~89 of \textit{Proc. Mach.
  Learn. Res.}

\bibitem[{Fadili et~al.(2018)Fadili, Malick \& Peyr\'{e}}]{FadiliEtAl18}
\textsc{Fadili, J.}, \textsc{Malick, J.} \& \textsc{Peyr\'{e}, G.} (2018).
\newblock Sensitivity analysis for mirror-stratifiable convex functions.
\newblock \textit{SIAM J. Optimiz.} \textbf{28}, 2975--3000.

\bibitem[{Fadili et~al.(2025)Fadili, Nghia \& Phan}]{FadiliEtAl25}
\textsc{Fadili, J.}, \textsc{Nghia, T.~T.~A.} \& \textsc{Phan, D.~N.} (2025).
\newblock Solution uniqueness of convex optimization problems via the radial
  cone.
\newblock \textit{J. Opt. Theory App.} \textbf{206}, 40.

\bibitem[{Gilbert(2017)}]{Gilbert17}
\textsc{Gilbert, J.~C.} (2017).
\newblock On the solution uniqueness characterization in the l1 norm and
  polyhedral gauge recovery.
\newblock \textit{J. Opt. Theory App.} \textbf{172}, 70--101.

\bibitem[{Gruber(2007)}]{Gruber07}
\textsc{Gruber, P.} (2007).
\newblock \textit{Convex and Discrete Geometry}.
\newblock Heidelberg: Springer.

\bibitem[{Hejný et~al.(2023)Hejný, Wallin \& Bogdan}]{HejnyEtAl23TR}
\textsc{Hejný, I.}, \textsc{Wallin, J.} \& \textsc{Bogdan, M.} (2023).
\newblock Weak pattern convergence for slope and its robust versions.
\newblock Preprint 2303.10970, arxiv.

\bibitem[{Hiriart-Urruty \& Lemarechal(2001)}]{HiriartLemarechal01}
\textsc{Hiriart-Urruty, J.-B.} \& \textsc{Lemarechal, C.} (2001).
\newblock \textit{Fundamentals of Convex Analysis}.
\newblock Heidelberg: Springer.

\bibitem[{J{\'e}gou et~al.(2012)J{\'e}gou, Furon \& Fuchs}]{JegouEtAl20}
\textsc{J{\'e}gou, H.}, \textsc{Furon, T.} \& \textsc{Fuchs, J.~J.} (2012).
\newblock Anti-sparse coding for approximate nearest neighbor search.
\newblock In \textit{2012 IEEE International Conference on Acoustics, Speech
  and Signal Processing (ICASSP)}.

\bibitem[{Kim et~al.(2009)Kim, Koh, Boyd \& Gorinevsky}]{KimEtAl09}
\textsc{Kim, S.-J.}, \textsc{Koh, K.}, \textsc{Boyd, S.} \& \textsc{Gorinevsky,
  D.} (2009).
\newblock $\ell_1$ trend filtering.
\newblock \textit{SIAM Rev.} \textbf{51}, 339--360.

\bibitem[{Lee et~al.(2016)Lee, Sun, Sun \& Taylor}]{LeeEtAl16}
\textsc{Lee, J.~D.}, \textsc{Sun, D.~L.}, \textsc{Sun, Y.} \& \textsc{Taylor,
  J.~E.} (2016).
\newblock Exact post-selection inference with an application to the {L}asso.
\newblock \textit{Ann. Stat.} \textbf{44}, 907--927.

\bibitem[{Mairal \& Yu(2012)}]{MairalYu2012}
\textsc{Mairal, J.} \& \textsc{Yu, B.} (2012).
\newblock Complexity analysis of the {L}asso regularization path.
\newblock In \textit{Proceedings of the 29th International Conference on
  Machine Learning (ICML-12)}.

\bibitem[{Minami(2020)}]{Minami20}
\textsc{Minami, K.} (2020).
\newblock Degrees of freedom in submodular regularization: A computational
  perspective of {S}tein's unbiased risk estimate.
\newblock \textit{J. Multivariate Anal.} \textbf{175}, 104546.

\bibitem[{Mousavi \& Shen(2019)}]{MousaviShen19}
\textsc{Mousavi, S.} \& \textsc{Shen, J.} (2019).
\newblock Solution uniqueness of convex piecewise affine functions based
  optimization with applications to constrained $l_1$ minimization.
\newblock \textit{ESAIM: Control Optim. Calc. Var.} \textbf{25}, 1--56.

\bibitem[{Negrinho \& Martins(2014)}]{NegrinhoMartins14}
\textsc{Negrinho, R.} \& \textsc{Martins, A.} (2014).
\newblock Orbit regularization.
\newblock In \textit{Advances in Neural Information Processing Systems},
  vol.~27.

\bibitem[{Nomura(2020)}]{Nomura20TR}
\textsc{Nomura, S.} (2020).
\newblock An exact solution path algorithm for slope and quasi-spherical oscar.
\newblock Preprint 2010.15511, arxiv.

\bibitem[{Pokarowski et~al.(2022)Pokarowski, Rejchel, So{\l}tys, Frej \&
  Mielniczuk}]{PokarowskiEtAl22}
\textsc{Pokarowski, P.}, \textsc{Rejchel, W.}, \textsc{So{\l}tys, A.},
  \textsc{Frej, M.} \& \textsc{Mielniczuk, J.} (2022).
\newblock Improving {L}asso for model selection and prediction.
\newblock \textit{Scand. J. Stat.} \textbf{49}, 831--863.

\bibitem[{Rockafellar(1997)}]{Rockafellar97}
\textsc{Rockafellar, R.} (1997).
\newblock \textit{Convex Analysis}.
\newblock Princeton University Press.

\bibitem[{Schneider \& Tardivel(2022)}]{SchneiderTardivel22}
\textsc{Schneider, U.} \& \textsc{Tardivel, P.} (2022).
\newblock The geometry of uniqueness, sparsity and clustering in penalized
  estimation.
\newblock \textit{J. Mach. Learn. Res.} \textbf{23}, 1--36.

\bibitem[{Sepehri \& Harris(2017)}]{SepehriHarris17}
\textsc{Sepehri, A.} \& \textsc{Harris, N.} (2017).
\newblock The accessible lasso models.
\newblock \textit{Stat.} \textbf{51}, 711--721.

\bibitem[{Tardivel \& Bogdan(2022)}]{TardivelBogdan22}
\textsc{Tardivel, P.} \& \textsc{Bogdan, M.} (2022).
\newblock On the sign recovery by least absolute shrinkage and selection
  operator, thresholded least absolute shrinkage and selection operator and
  thresholded basis pursuit denoising.
\newblock \textit{Scand. J. Stat.} \textbf{49}, 1636--1668.

\bibitem[{Tardivel et~al.(2020)Tardivel, Servien \& Concordet}]{TardivelEtAl20}
\textsc{Tardivel, P.}, \textsc{Servien, R.} \& \textsc{Concordet, D.} (2020).
\newblock Simple expressions of the {LASSO} and {SLOPE} estimators in
  small-dimension.
\newblock \textit{Stat.} \textbf{54}, 340--352.

\bibitem[{Tibshirani(1996)}]{Tibshirani96}
\textsc{Tibshirani, R.} (1996).
\newblock Regression shrinkage and selection via the {L}asso.
\newblock \textit{J. Roy. Stat. Soc. B} \textbf{58}, 267--288.

\bibitem[{Tibshirani(2013)}]{Tibshirani13}
\textsc{Tibshirani, R.~J.} (2013).
\newblock The {L}asso problem and uniqueness.
\newblock \textit{Electron. J. Stat.} \textbf{7}, 1456--1490.

\bibitem[{Tibshirani et~al.(2005)Tibshirani, Sanders, Rosset, Zhu \&
  Knight}]{TibshiraniEtAl05}
\textsc{Tibshirani, R.~J.}, \textsc{Sanders, M.}, \textsc{Rosset, S.},
  \textsc{Zhu, J.} \& \textsc{Knight, K.} (2005).
\newblock Sparsity and smoothness via the fused {L}asso.
\newblock \textit{J. Roy. Stat. Soc. B} \textbf{67}, 91--108.

\bibitem[{Tibshirani \& Taylor(2011)}]{TibshiraniTaylor11}
\textsc{Tibshirani, R.~J.} \& \textsc{Taylor, J.} (2011).
\newblock The solution path of the generalized {L}asso.
\newblock \textit{Ann. Stat.} \textbf{39}, 1335--1371.

\bibitem[{Traonmilin et~al.(2024)Traonmilin, Gribonval \&
  Vaiter}]{TraonmilinEtAl24}
\textsc{Traonmilin, Y.}, \textsc{Gribonval, R.} \& \textsc{Vaiter, S.} (2024).
\newblock A theory of optimal convex regularization for low-dimensional
  recovery.
\newblock \textit{Infor Inference: J IMA} \textbf{13}, iaae013.

\bibitem[{Vaiter et~al.(2015)Vaiter, Goldabaee, Fadili \&
  Peyr{\'e}}]{VaiterEtAl15}
\textsc{Vaiter, S.}, \textsc{Goldabaee, M.}, \textsc{Fadili, J.} \&
  \textsc{Peyr{\'e}, G.} (2015).
\newblock Model selection with low complexity priors.
\newblock \textit{Infor Inference: J IMA} \textbf{4}, 230--287.

\bibitem[{Vaiter et~al.(2018)Vaiter, Peyr{\'e} \& Fadili}]{VaiterEtAl18}
\textsc{Vaiter, S.}, \textsc{Peyr{\'e}, G.} \& \textsc{Fadili, J.} (2018).
\newblock Model consistency of partly smooth regularizers.
\newblock \textit{IEEE T. Inform. Theory} \textbf{64}, 1725--1737.

\bibitem[{Wainwright(2009)}]{Wainwright09}
\textsc{Wainwright, M.~J.} (2009).
\newblock Sharp thresholds for high-dimensional and noisy sparsity recovery
  using $\ell_{1}$-constrained quadratic programming (lasso).
\newblock \textit{IEEE T. Inform. Theory} \textbf{55}, 2183--2202.

\bibitem[{Zhao \& Yu(2006)}]{ZhaoYu06}
\textsc{Zhao, P.} \& \textsc{Yu, B.} (2006).
\newblock On model selection consistency of {L}asso.
\newblock \textit{J. Mach. Learn. Res.} \textbf{7}, 2541--2563.

\bibitem[{Ziegler(2012)}]{Ziegler12}
\textsc{Ziegler, G.} (2012).
\newblock \textit{Lectures on Polytopes}, vol. 152.
\newblock New York: Springer.

\bibitem[{Zou(2006)}]{Zou06}
\textsc{Zou, H.} (2006).
\newblock The adaptive {L}asso and its oracle properties.
\newblock \textit{J. Am. Stat. Ass.} \textbf{101}, 1418--1429.

\end{thebibliography}

\end{document}